\theoremstyle{plain}
\newtheorem{thm}{Theorem}[section]
\newtheorem{theorem}[thm]{Theorem}
\newtheorem{lemma}[thm]{Lemma}
\newtheorem{corollary}[thm]{Corollary}
\newtheorem{proposition}[thm]{Proposition}
\newtheorem*{myprop1}{Proposition}
\newtheorem*{mycor1}{Corollary}
\theoremstyle{definition}
\newtheorem{definition}[thm]{Definition}
\newtheorem{example}[thm]{Example}
\newtheorem{defn-thm}[thm]{Definition-Theorem}
\newtheorem*{myrem}{Remark}
\numberwithin{equation}{thm}
\def\opn#1#2{\def#1{\mathop{\kern0pt\fam0#2}\nolimits}}
\def\underrightarrow{\mathpalette\underrightarrow@}
\def\underrightarrow@#1#2{\vtop{\ialign{$##$\cr
 \hfil#1#2\hfil\cr\noalign{\nointerlineskip}%
 #1{-}\mkern-6mu\cleaders\hbox{$#1\mkern-2mu{-}\mkern-2mu$}\hfill
 \mkern-6mu{\to}\cr}}}
\def\underleftarrow{\mathpalette\underleftarrow@}
\def\underleftarrow@#1#2{\vtop{\ialign{$##$\cr
 \hfil#1#2\hfil\cr\noalign{\nointerlineskip}#1{\leftarrow}\mkern-6mu
 \cleaders\hbox{$#1\mkern-2mu{-}\mkern-2mu$}\hfill
 \mkern-6mu{-}\cr}}}
\let\amp@rs@nd@\relax
\newdimen\ex@
\newdimen\bigaw@
\newdimen\minaw@
\newdimen\minCDaw@
\newif\ifCD@
\def\minCDarrowwidth#1{\minCDaw@#1}
\def\@CD{\def\A##1A##2A{\llap{$\vcenter{\hbox
 {$\scriptstyle##1$}}$}\Big\uparrow\rlap{$\vcenter{\hbox{%
$\scriptstyle##2$}}$}&&}%
\def\V##1V##2V{\llap{$\vcenter{\hbox
 {$\scriptstyle##1$}}$}\Big\downarrow\rlap{$\vcenter{\hbox{%
$\scriptstyle##2$}}$}&&}%
\def\={&\hskip.5em\mathrel
 {\vbox{\hrule width\minCDaw@\vskip3\ex@\hrule width
 \minCDaw@}}\hskip.5em&}%
\def\verteq{\Big\Vert&&}%
\def\noarr{&&}%
\def\vspace##1{\noalign{\vskip##1\relax}}\relax\iffalse{%
\fi\let\amp@rs@nd@&\iffalse}\fi
 \CD@true\vcenter\bgroup\relax\iffalse{%
\fi\let\\=\cr\iffalse}\fi\tabskip\z@skip\baselineskip20\ex@
 \lineskip3\ex@\lineskiplimit3\ex@\halign\bgroup
 &\hfill$\m@th##$\hfill\cr}
\def\@endCD{\cr\egroup\egroup}
\def\>#1>#2>{\amp@rs@nd@\setbox\z@\hbox{$\scriptstyle
 \;{#1}\;\;$}\setbox\@ne\hbox{$\scriptstyle\;{#2}\;\;$}\setbox\tw@
 \hbox{$#2$}\ifCD@
 \global\bigaw@\minCDaw@\else\global\bigaw@\minaw@\fi
 \ifdim\wd\z@>\bigaw@\global\bigaw@\wd\z@\fi
 \ifdim\wd\@ne>\bigaw@\global\bigaw@\wd\@ne\fi
 \ifCD@\hskip.5em\fi
 \ifdim\wd\tw@>\z@
 \mathrel{\mathop{\hbox to\bigaw@{\rightarrowfill}}\limits^{#1}_{#2}}\else
 \mathrel{\mathop{\hbox to\bigaw@{\rightarrowfill}}\limits^{#1}}\fi
 \ifCD@\hskip.5em\fi\amp@rs@nd@}
\def\<#1<#2<{\amp@rs@nd@\setbox\z@\hbox{$\scriptstyle
 \;\;{#1}\;$}\setbox\@ne\hbox{$\scriptstyle\;\;{#2}\;$}\setbox\tw@
 \hbox{$#2$}\ifCD@
 \global\bigaw@\minCDaw@\else\global\bigaw@\minaw@\fi
 \ifdim\wd\z@>\bigaw@\global\bigaw@\wd\z@\fi
 \ifdim\wd\@ne>\bigaw@\global\bigaw@\wd\@ne\fi
 \ifCD@\hskip.5em\fi
 \ifdim\wd\tw@>\z@
 \mathrel{\mathop{\hbox to\bigaw@{\leftarrowfill}}\limits^{#1}_{#2}}\else
 \mathrel{\mathop{\hbox to\bigaw@{\leftarrowfill}}\limits^{#1}}\fi
 \ifCD@\hskip.5em\fi\amp@rs@nd@}
\newenvironment{CDS}{\@CDS}{\@endCDS}
\def\@CDS{\def\A##1A##2A{\llap{$\vcenter{\hbox
 {$\scriptstyle##1$}}$}\Big\uparrow\rlap{$\vcenter{\hbox{%
$\scriptstyle##2$}}$}&}%
\def\V##1V##2V{\llap{$\vcenter{\hbox
 {$\scriptstyle##1$}}$}\Big\downarrow\rlap{$\vcenter{\hbox{%
$\scriptstyle##2$}}$}&}%
\def\={&\hskip.5em\mathrel
 {\vbox{\hrule width\minCDaw@\vskip3\ex@\hrule width
 \minCDaw@}}\hskip.5em&}
\def\verteq{\Big\Vert&}
\def\novarr{&}
\def\noharr{&&}
\def\SE##1E##2E{\slantedarrow(0,18)(4,-3){##1}{##2}&}
\def\SW##1W##2W{\slantedarrow(24,18)(-4,-3){##1}{##2}&}
\def\NE##1E##2E{\slantedarrow(0,0)(4,3){##1}{##2}&}
\def\NW##1W##2W{\slantedarrow(24,0)(-4,3){##1}{##2}&}
\def\slantedarrow(##1)(##2)##3##4{%
\thinlines\unitlength1pt\lower 6.5pt\hbox{\begin{picture}(24,18)%
\put(##1){\vector(##2){24}}%
\put(0,8){$\scriptstyle##3$}%
\put(20,8){$\scriptstyle##4$}%
\end{picture}}}
\def\vspace##1{\noalign{\vskip##1\relax}}\relax\iffalse{%
\fi\let\amp@rs@nd@&\iffalse}\fi
 \CD@true\vcenter\bgroup\relax\iffalse{%
\fi\let\\=\cr\iffalse}\fi\tabskip\z@skip\baselineskip20\ex@
 \lineskip3\ex@\lineskiplimit3\ex@\halign\bgroup
 &\hfill$\m@th##$\hfill\cr}
\def\@endCDS{\cr\egroup\egroup}
\newdimen\TriCDarrw@
\newif\ifTriV@
\def\@TriCDV{\TriV@true\def\TriCDpos@{6}\@TriCD}
\def\@TriCDA{\TriV@false\def\TriCDpos@{10}\@TriCD}
\def\@TriCD#1#2#3#4#5#6{%
\setbox0\hbox{$\ifTriV@#6\else#1\fi$} \TriCDarrw@=\wd0
\advance\TriCDarrw@ 24pt \advance\TriCDarrw@ -1em
\def\SE##1E##2E{\slantedarrow(0,18)(2,-3){##1}{##2}&}
\def\SW##1W##2W{\slantedarrow(12,18)(-2,-3){##1}{##2}&}
\def\NE##1E##2E{\slantedarrow(0,0)(2,3){##1}{##2}&}
\def\NW##1W##2W{\slantedarrow(12,0)(-2,3){##1}{##2}&}
\def\slantedarrow(##1)(##2)##3##4{\thinlines\unitlength1pt
\lower 6.5pt\hbox{\begin{picture}(12,18)%
\put(##1){\vector(##2){12}}%
\put(-4,\TriCDpos@){$\scriptstyle##3$}%
\put(12,\TriCDpos@){$\scriptstyle##4$}%
\end{picture}}}
\def\={\mathrel {\vbox{\hrule
   width\TriCDarrw@\vskip3\ex@\hrule width
   \TriCDarrw@}}}
\def\>##1>>{\setbox\z@\hbox{$\scriptstyle
 \;{##1}\;\;$}\global\bigaw@\TriCDarrw@
 \ifdim\wd\z@>\bigaw@\global\bigaw@\wd\z@\fi
 \hskip.5em
 \mathrel{\mathop{\hbox to \TriCDarrw@
{\rightarrowfill}}\limits^{##1}}
 \hskip.5em}
\def\<##1<<{\setbox\z@\hbox{$\scriptstyle
 \;{##1}\;\;$}\global\bigaw@\TriCDarrw@
 \ifdim\wd\z@>\bigaw@\global\bigaw@\wd\z@\fi
 \mathrel{\mathop{\hbox to\bigaw@{\leftarrowfill}}\limits^{##1}}
 }
 \CD@true\vcenter\bgroup\relax\iffalse{\fi\let\\=\cr\iffalse}\fi
 \tabskip\z@skip\baselineskip20\ex@
 \lineskip3\ex@\lineskiplimit3\ex@
 \ifTriV@
 \halign\bgroup
 &\hfill$\m@th##$\hfill\cr
#1&\multispan3\hfill$#2$\hfill&#3\\
&#4&#5\\
&&#6\cr\egroup%
\else
 \halign\bgroup
 &\hfill$\m@th##$\hfill\cr
&&#1\\%
&#2&#3\\
#4&\multispan3\hfill$#5$\hfill&#6\cr\egroup \fi}
\def\@endTriCD{\egroup}
\newcounter{Myenumi}
\newenvironment{myenumi}%
{\begin{list}{}{\usecounter{Myenumi}%
\settowidth{\leftmargin}{2.n}\settowidth{\labelwidth}{2.n}%
\setlength{\labelsep}{0pt}}}{\end{list}}
\newcounter{Myenumii}
\newenvironment{myenumii}%
{\begin{list}{}{\usecounter{Myenumii}%
\settowidth{\leftmargin}{a)n}\settowidth{\labelwidth}{a)n}%
\setlength{\labelsep}{0pt}}}{\end{list}}
\newcounter{Myenumiii}
\newenvironment{myenumiii}%
{\begin{list}{}{\usecounter{Myenumiii}%
\settowidth{\leftmargin}{iv.n}\settowidth{\labelwidth}{iv.n}%
\setlength{\labelsep}{0pt}}}{\end{list}}
\renewenvironment{itemize}%
{\begin{list}{}{%
\settowidth{\leftmargin}{2.n}\settowidth{\labelwidth}{2.n}%
\setlength{\labelsep}{0pt}}}{\end{list}}
\newcommand{\sA}{{\mathcal A}}
\newcommand{\sF}{{\mathcal F}}
\newcommand{\sH}{{\mathcal H}}
\newcommand{\sJ}{{\mathcal J}}
\newcommand{\sN}{{\mathcal N}}
\newcommand{\sO}{{\mathcal O}}
\newcommand{\sT}{{\mathcal T}}
\newcommand{\sV}{{\mathcal V}}
\newcommand{\A}{{\mathbb A}}
\newcommand{\C}{{\mathbb C}}
\newcommand{\D}{{\mathbb D}}
\newcommand{\F}{{\mathbb F}}
\newcommand{\Q}{{\mathbb Q}}
\newcommand{\R}{{\mathbb R}}
\newcommand{\V}{{\mathbb V}}
\newcommand{\Z}{{\mathbb Z}}
\newcommand{\rank}{{\rm rank}}
\newcommand{\End}{{\rm End}}
\newcommand{\Tr}{{\rm Trace}}
\newcommand{\GL}{{\rm GL}}
\newcommand{\Sp}{{\rm Sp}}
\newcommand{\Hom}{{\rm Hom}}
\newcommand{\Gr}{{\rm Gr}}
\newcommand{\Id}{{\rm Id}}
\newcommand{\Spec}{{\rm Spec}}
\newcommand{\Sym}{{\rm Sym}}
\begin{document}

\title[Hodge Bundles on Smooth Compactifications of Siegel Varieties and Applications]{Hodge Bundles on Smooth Compactifications of \\Siegel Varieties  and Applications}
\author{Shing-Tung Yau}
\address{\rm Department of Mathematics\\
Harvard University\\ Cambridge, MA 02138, USA}
\email{yau@math.harvard.edu}

\author{Yi Zhang}\footnote{Yi Zhang was supported in part by the NSFC Grant(\#10731030) of Key Project "Algebraic Geometry"} %and also was supported partially by LNMS, Fudan University.}
\address{\rm School of Mathematical Sciences \\
Fudan University \\
Shanghai 200433, China}
%\address{Current address: Department of Mathematics\\
%Harvard University\\ Cambridge, MA 02138, USA
%}
\email{zhangyi\_math@fudan.edu.cn, zhangyi.alex@gmail.com}

\date{\today}
\maketitle

{ \small \tableofcontents}

%\vspace{1cm}

Siegel varieties are locally symmetric varieties. They are important and interesting in algebraic geometry and number theory.
We construct a canonical Hodge bundle on a Siegel variety so that the holomorphic tangent bundle can be embedded into the Hodge bundle;
we obtain that the canonical Bergman metric on a Siegel variety  is same as the induced Hodge metric and we describe asymptotic behavior of this unique K\"ahler-Einstein metric
explicitly; depending on these properties and the uniformitarian of K\"ahler-Einstein manifold, we study  extensions of  the tangent bundle over any smooth toroidal compactification
(Theorem \ref{hodge-metric-is-Bergman-metric}, Theorem \ref{degenerate-Bergamn} and Theorem \ref{stable-bundle} in Section 1).
%and study the stability of the logarithmic tangent bundle with respect to the canonical Bergman  current, and obtain that over any smooth toroidal compactification of a Siegel variety
We apply these results of Hodge bundles,  to study dimension of Siegel cusp modular forms and  general type for  Siegel varieties
(Theorem \ref{estimate-dimension} and Theorem \ref{Theorem on general-type-1} in Section 2).\\

%\vspace{1cm}

Throughout this paper, $g$ is an integer more than two.

In this paper, we fix a real vector space $V_\R$  of dimensional $2g$ and fix a standard symplectic form $\psi=\left(
  \begin{array}{cc}
    0 & -I_g \\
    I_g & 0
  \end{array}
\right)$ on $V_\R.$
     For any non-degenerate skew-symmetric bilinear form $\widetilde{\psi}$
on $V_\R,$ it is known that there is an element $T\in \GL(V_\R)$ such that
$^tT\widetilde{\psi}T=\psi.$ We also fix a  symplectic basis
$\{e_{i}\}_{1\leq i\leq 2g}$ of the standard symplectic space
$(V_\R,\psi)$ such that
\begin{equation}\label{standard-symplectic-basis}
    \psi(e_i,e_{g+i})=-1 \mbox{ for } 1\leq i\leq g, \mbox{ and }
  \psi(e_i,e_{j}) =0  \mbox{ for } |j-i|\neq g.
\end{equation}
\begin{itemize}
    \item Denote by $V_\Z:= \oplus_{1\leq i\leq 2g}\Z e_i,$ then
$V_\R=V_\Z\otimes_\Z \R$ and  $V_\Z$ is a standard lattice in
$V_\R.$ In this paper, we fix the lattice $V_\Z$ and fix the
rational space $V_\Q:=V_\Z\otimes_\Z\Q.$
%We write $V$ for $V_\Q$ for convenience,

   \item For any $\Z$-algebra $\frak{R},$ we define $V_{\frak{R}}:=V_\Z\otimes_\Z \frak{R}$
and we write
\begin{equation}\label{Sympletic-group}
\Sp(g, \frak R):=\{h\in \GL(V_{\frak R})\,|\,
\psi(hu,hv)=\psi(u,v)\, \mbox{ for all } u,v \in V_{\frak R}\}.
\end{equation}
%Let $\Sp(g,\Z)$ be the set of $\Z$-points of $\Sp(V,\psi).$
Since $\det
M=\pm 1$ for all $ M\in \Sp(g,\Z),$  $\Sp(g,\Z)$ is a subgroup of
$\Sp(g,\Q).$
\end{itemize}

%\vspace{1cm}

%Section 1

\newpage
%\newpage

\section{Hodge bundles on Siegel varieties}

\vspace{1cm}

Let $\Gamma$ be a neat arithmetic subgroup of $\Sp(g,\Q).$ Let $V_\Q$ be the fixed rational symplectic vector space as in the introduction of notations.

By Borel's embedding theorem, the Siegel space $\frak H_g$ can be realized as a bounded domain parameterizing weight one polarized Hodge structures(cf.Proposition \ref{Borel-embedding} in A2). Moreover,
there is a natural variation of Hodge structures on the  Siegel space $\frak H_g$ :
\begin{mycor1}[Cf.\cite{Del79}]
Gluing Hodge structures on $\frak{H}_g$
altogether, the local system $\V:=V_\Q\times \frak{H}_g$  admits a
homogenous variation of polarized rational Hodge structures of
weight one on $\frak{H}_g.$
\end{mycor1}

Let $o$ be an arbitrary fixed base point in $\sA_{g,\Gamma}.$
Since $\frak{H}_g$ is simply connected, the fundamental group of $\sA_{g,\Gamma}$
has
$\pi_1(\sA_{g,\Gamma}, o)\cong\Gamma.$ Then, there is  a natural local system
$\V_{g,\Gamma}:=V_\Q\times_{\Gamma}\frak{H}_g$
 on $\sA_{g,\Gamma}$ given by the fundamental representation
$\rho:\pi_1(\sA_{g,\Gamma}, o)\to \mathrm{GSp}(V,\psi)(\Q).$
Actually the $\Q$-local system $\V_{g,\Gamma}$ admits a variation of polarized rational Hodge structures of weight one
on $\sA_{g,\Gamma}:=\Gamma\backslash\frak{H}_g$ by using the arguments
in Section $4$ of \cite{Zuc81}.  More precisely, in our previous paper (cf Proposition 1.8 in Section 1 of \cite{Y-Z}) we obtain :
\begin{myenumi}
\item The local system  $\V_{g,\Gamma}$ admits a variation of
polarized rational Hodge structures on $\sA_{\Gamma},$ and the
associated period map attached to this PVHS is
\begin{equation}\label{period-mapping}
 \iota_{\Gamma}: \sA_{g,\Gamma}\>\cong>> \Gamma \backslash\frak{S}_g.
\end{equation}

\item Let  $\widetilde{\sA}_{g,\Gamma}$ be an arbitrary smooth
compactification of $\sA_{g,\Gamma}$ with simple normal crossing
divisor
 $D_\infty:=\widetilde{\sA}_{g,\Gamma}\setminus\sA_{g,\Gamma}.$ Around the boundary divisor
 $D_\infty,$ all local monodromies of any rational PVHS $\widetilde{V}$ on $\sA_{g,\Gamma}$  are unipotent.\\
\end{myenumi}

Now,  we fix this rational PVHS $\V_{g,\Gamma}$ throughout this paper.

\subsection{Construction of Hodge bundles on Siegel varieties}
Most materials in this subsection are taken from \cite{Sch73},\cite{Sim90} and \cite{Zuo00}.

We note that $\mathbb{H}:=\V_{g,\Gamma}\otimes \C$ is a flat complex vector bundle on the $\sA_{g,\Gamma}$ with a flat connection  $\D.$ %Actually $\mathbb{H}$ admits a polarized rational VHS $R^1\chi_{g,n*}(\Q),$
There is a filtration of $C^\infty$ vector bundles over
$\sA_{g,\Gamma}$
$0=\F^2\subset\F^1\subset \F^0=\mathbb{H},$  whose
fibers at each point $\tau\in \sA_{g,\Gamma}$ gives a Hodge
filtration isomorphic to $F_\tau^\bullet:=(0\subset F^1_\tau\subset V_\C).$
%$$0\subset H^{1,0}_\tau \subset \mathbb{H}_\tau=V_\C.$$
%$$0\subset H^1(\frak{X}_{\tau}, \Omega^1_{\frak{X}_{\tau}})\subset H^1(\frak{X}_{\tau},\C)=\mathbb{H}_\tau.$$
The vector bundle $\mathbb{H}$ admits a
positive Hermitian metric $H$ induced by the polarization $\psi$ of the Hodge structures as
follows:
\begin{equation}\label{Hodge-metric}
    <u,\overline{v}>_{H}:=\psi(C_\tau u,\overline{v})\,\, \forall  u,v\in \mathbb{H}_\tau,
\end{equation}
where each $C_\tau$ is the Weil operator on the $F_\tau^\bullet.$ We usually call this metric
$H$ the \textbf{Hodge metric} on $\mathbb{H}.$
Let $\mathbb{H}^{p,q}:= \F^p\cap \overline{\F^{q}}.$  The smooth
decomposition
$\mathbb{H}=\bigoplus \mathbb{H}^{p,q} $
is orthogonal with respect to the Hodge metric $H.$

Let $\D^{0,1}$ be the $(0,1)$-part of the flat connection $\D$
and $\D^{1,0}$ the $(1,0)$-part of $\D.$ The $\D^{0,1}$
  gives a holomorphic structure on
$\mathbb{H},$ so that $\sH:=(\mathbb{H}, \D^{0,1})$ is the corresponding holomorphic bundle.
 The $\D^{1,0}$ guarantees $\sH$ has an integrable holomorphic connection
$\D^{1,0} :  \sH \to \sH\otimes \Omega_{\sA_{g,\Gamma}}^1.$
It is known that  all subbundles $\F^p$'s admit the holomorphic structure
$\D^{0,1}$ naturally, so that we have the corresponding holomorphic subbundles
$\sF^p.$ Moreover, we have  the Griffiths transversality :
\begin{equation}\label{Griffiths-transversality}
    \D^{1,0}: \sF^p\>>>\sF^{p-1} \otimes \Omega_{\sA_{g,\Gamma}}^1,
\forall p.
\end{equation}

Define
$E^{p,2-p}:=\sF^p/\sF^{p+1}\,\,\,\forall p.$
We know that each holomorphic vector bundle $E^{p,q}$ is  $C^\infty$-isomorphic to the vector bundle $\mathbb{H}^{p,q}.$
%Actually, we have $$E^{p,q}=R^q\pi_*(\Omega_{\frak{X}_{g,n}/\sA_{g,\Gamma}}^p), \,\, p+q=1.$$
We set $E:=\Gr(\sH)=\bigoplus_p E^{p,n-p}.$
The  flat connection $\D$ on $\sH$ actually induces a global  holomorphic structure
$\overline{\partial}$ on $E$ such that each $E^{p,q}$ is a holomorphic subbundle od $E.$
We write :
\begin{equation}\label{preHodge-bundles}
   E^{p,q}=(\mathbb{H}^{p,q},\overline{\partial}),\mbox{ and } E=(\bigoplus \mathbb{H}^{p,q},\overline{\partial}).
\end{equation}
The holomorphic vector bundles $E$ and $E^{p,q}$'s  are endowed natural Hermitian metrics induced by $H.$
For convenience, we still call these Hermitian metrics the Hodge metrics and still write these Hermitian metrics as $H.$ \\

Let $\mathbb{T}(\sA_{g,\Gamma})$ be the real tangent bundle  of $\sA_{g,\Gamma}.$
According to $\pm\sqrt{-1}$-eigenvalues of the complex structure
$J$ on $\mathbb{T}(\sA_{g,\Gamma}),$ there is a $C^\infty$
decomposition
$\mathbb{T}(\sA_{g,\Gamma})\otimes \C=\mathbb{T}^{1,0}(\sA_{g,\Gamma})\oplus \mathbb{T}^{0,1}(\sA_{g,\Gamma}).$
The real tangent bundle
$\mathbb{T}(\sA_{g,\Gamma})$
undertakes the holomorphic tangent bundle
$\sT_{\sA_{g,\Gamma}}:=(\Omega_{\sA_{g,\Gamma}}^1)^\vee$ in sense
that
$$\mathbb{T}^{1,0}(\sA_{g,\Gamma})\>\cong>C^\infty>\sT_{\sA_{g,\Gamma}},
\,\,\mathbb{T}^{0,1}(\sA_{g,\Gamma})\>\cong>C^\infty>\overline{\sT_{\sA_{g,\Gamma}}}.$$
Let $(p,q)$ be a pair of integers. For any local holomorphic vector filed $\overrightarrow{X}$ of $
T_{\sA_{g,\Gamma}},$ there is a local
$\sO_{\sA_{g,\Gamma}}$-linear morphism
$\theta^{p,q}(\overrightarrow{X}) :  E^{p,q}\to E^{p-1,q+1}$
by the Griffiths transversality \ref{Griffiths-transversality}. Then we get an
$\sO_{\sA_{g,\Gamma}}$-linear morphism
$\theta^{p,q}:E^{p,q}\to E^{p-1,q+1} \otimes\Omega^{1}_{\sA_{g,\Gamma}},$
and so we get
the adjoint map $\theta^{p-1,q+1*}_H: E^{p-1,q+1}\to E^{p,q}\otimes
\overline{\Omega^1_{\sA_{g,\Gamma}}}$ of $\theta^{p,q}$ given by
$<\theta^{p,q}(s),\overline{t}>_H=<s,\overline{\theta^{p-1,q+1*}_H(t)}>_H,$ where $s$(resp. $t$) is a local section of $E^{p,q}$(resp. $E^{p-1,q+1}$). Clearly $\theta^{p,q}$ can be regarded as  an $\overline{\sO_{\sA_{g,\Gamma}}}$-linear morphism.
The \textbf{Higgs field} $\theta$ on $E$ is defined as follows :
$$
\theta=\bigoplus_{p.q}\theta^{p,q} :\bigoplus_{p,q} E^{p,q}\>>>\bigoplus_{p,q}
E^{p,q}\otimes \Omega^1_{\sA_{g,\Gamma}}.
$$
Respectively, the adjoint morphism of $\theta$ is defined to be $\theta^*_H :=\bigoplus\limits_{p,q} \theta^{p,q*}_H.$
\begin{myrem}Let $A^{1}$ be the dual of the sheaf of $C^\infty$ germs of $\mathbb{T}(\sA_{g,\Gamma}).$ Then there is a $C^\infty$ splitting $A^1=A^{1,0}\oplus A^{0,1}$ where $A^{1,0}$(resp. $A^{0,1}$) is the dual of the sheaf
of $C^\infty$ germs of $\sT_{\sA_{g,\Gamma}}$(resp.
$\overline{\sT_{\sA_{g,\Gamma}}}$).
We can extend $\theta$ and $\theta^*_H$  naturally as $C^\infty$
morphisms
\begin{eqnarray*}
  \theta : C^\infty(E)  &\>>>& C^\infty(E)\otimes A^{1,0}, \\
  \theta^*_H : C^\infty(E)  &\>>>& C^\infty(E)\otimes A^{0,1},
\end{eqnarray*}
where $C^\infty(E)$ is the sheaf of $C^\infty$ germs of $E.$
\end{myrem}

Let $\nabla_H$ be the unique Chern connection on $(E,H).$ Thus,
the connection $\nabla_H$ is compatible with the Hodge metric, and its $(0,1)$-part has
$\nabla_H^{0,1}=\overline{\partial}.$
Define $\partial:= \nabla_H^{1,0}.$
We immediately obtain
$\partial^2 = \overline{\partial}^2=0,$
and get the Chern curvature form
$$\Theta(E,H):=\nabla_H\circ \nabla_H=(\nabla_H^2)^{1,1}.$$
\begin{lemma}\label{holomorphic-Higgs-field}
We have :
\begin{eqnarray*}
  \overline{\partial}(\theta) &:=& \overline{\partial}\circ \theta+ \theta\circ \overline{\partial}=0, \\
  \partial(\theta^*_h) &:=& \partial\circ \theta^*_h +\theta^*_h\circ\partial=0.
\end{eqnarray*}
\end{lemma}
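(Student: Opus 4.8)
The plan is to recognize this as the standard statement that the Higgs bundle $(E,\theta)$ associated to a polarized variation of Hodge structure carries a holomorphic Higgs field, i.e., $\theta$ is $\overline{\partial}$-closed, together with the metric-conjugate statement for the adjoint $\theta^*_H$. The cleanest route is to descend everything from the flat connection $\D$ on $\mathbb{H}$. The key identity is that under the $C^\infty$ decomposition $\mathbb{H}=\bigoplus \mathbb{H}^{p,q}$, the flat connection splits as
\begin{equation*}
\D=\theta+\partial+\overline{\partial}+\theta^*_H,
\end{equation*}
where $\partial+\overline{\partial}=\nabla_H$ is the Chern connection of $(E,H)$ (here using that on the graded bundle the Hodge metric makes the pieces orthogonal), $\theta$ is the $(1,0)$-piece shifting $(p,q)\mapsto(p-1,q+1)$ coming from Griffiths transversality \eqref{Griffiths-transversality}, and $\theta^*_H$ is the $(0,1)$-piece shifting $(p,q)\mapsto(p+1,q-1)$. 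That $\theta^*_H$ is indeed the metric adjoint of $\theta$ is exactly the content of the defining formula $\langle\theta(s),\overline{t}\rangle_H=\langle s,\overline{\theta^*_H(t)}\rangle_H$ combined with flatness of $\D$ (so $\D$ is an $H$-metric connection once we remember $H$ comes from the polarization and the Weil operator).

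Next I would expand $\D^2=0$ and collect terms by the bidegree shift they induce on the Hodge filtration. The operator $\theta$ shifts the filtration index $p$ down by one and is of form-type $(1,0)$; $\theta^*_H$ shifts $p$ up by one and is of type $(0,1)$; $\partial$ is type $(1,0)$ preserving $p$; $\overline{\partial}$ is type $(0,1)$ preserving $p$. Squaring and sorting:
\begin{itemize}
\item the component shifting $p\mapsto p-2$ (form-type $(2,0)$) gives $\theta\wedge\theta=0$;
\item the component shifting $p\mapsto p-1$ of form-type $(1,1)$ gives $\overline{\partial}\theta+\theta\overline{\partial}=0$, i.e., $\overline{\partial}(\theta)=0$;
\item symmetrically the component shifting $p\mapsto p+1$ of form-type $(1,1)$ gives $\partial\theta^*_H+\theta^*_H\partial=0$, i.e., $\partial(\theta^*_H)=0$;
\item the filtration-preserving $(1,1)$ component gives $\Theta(E,H)=\partial\overline{\partial}+\overline{\partial}\partial=-(\theta\theta^*_H+\theta^*_H\theta)$, which is not needed here but confirms the bookkeeping.
\end{itemize}
The first two displayed identities are precisely the two assertions of the lemma; the second can alternatively be obtained from the first by taking the $H$-adjoint of $\overline{\partial}(\theta)=0$, using that the adjoint of $\overline{\partial}$ is $\partial$ (this is how $\partial$ was defined, via $\nabla_H^{1,0}$) and the adjoint of $\theta$ is $\theta^*_H$.

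I would carry out the steps in this order: first pin down the decomposition $\D=\partial+\overline{\partial}+\theta+\theta^*_H$ and justify that $\partial+\overline{\partial}$ is the Chern connection on the graded bundle $E$ with the induced metric (this uses that $\D$ preserves $H$, which in turn uses the polarization/Weil-operator description \eqref{Hodge-metric}); then record the form-type and filtration-shift of each of the four pieces; then expand $0=\D^2$ and read off the $(1,1)$-components in the two relevant filtration degrees. The main obstacle — really the only subtle point — is verifying the adjointness and metric-compatibility carefully enough that the $(0,1)$-part of $\D$ restricted to $E$ is exactly $\overline{\partial}$ and its adjoint is exactly the $\partial=\nabla_H^{1,0}$ already introduced, since the lemma's second equation is stated with that specific $\partial$; once the operator $\D$ is correctly decomposed into its four bihomogeneous pieces with respect to both the form grading and the Hodge grading, both identities fall out of $\D^2=0$ purely formally. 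I do not expect the computation to require any estimate or analytic input — it is entirely linear-algebraic bookkeeping on the graded bundle, valid verbatim for a VHS over any complex manifold, so the Siegel-variety specifics play no role in this lemma.
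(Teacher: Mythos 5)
Your proof is correct, and it is genuinely a different route from the one the paper takes. The paper explicitly declines the Simpson-style argument you give ("One can find these two equalities in [Sim88] \& [Sim90]. Here we give a direct proof") and instead argues in local coordinates: the first identity is dismissed as automatic because $\theta$ is holomorphic by construction, and the second is obtained by writing $\theta=\sum_i A^i\phi_i$, $\theta^*_H=\sum_i B^i\overline{\phi_i}$ with $B^{i,\beta}_\alpha=\sum H_{\alpha\bar\gamma}\overline{A^{i,\gamma}_\delta}H^{\bar\delta\beta}$, choosing a normal frame at a point $p$ with $H(p)=\mathrm{Id}$, $dH(p)=0$, and observing that the covariant derivatives satisfy $B^{i,\alpha}_{\beta;j}(p)=\overline{A^{i,\alpha}_{\beta;\bar j}(p)}=0$ — i.e.\ the paper carries out in coordinates exactly the "take the $H$-adjoint of $\overline{\partial}(\theta)=0$" alternative you mention at the end. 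Your main route — decomposing $\mathbb{D}=\partial+\overline{\partial}+\theta+\theta^*_H$ into pieces bihomogeneous with respect to both form-type and Hodge-filtration shift, and reading both identities off the corresponding components of $\mathbb{D}^2=0$ — is the cleaner and more systematic argument: it yields $\theta\wedge\theta=0$, $\overline{\partial}(\theta)=0$, $\partial(\theta^*_H)=0$ and the curvature identity all at once, and in fact the paper itself invokes the decomposition $\mathbb{D}=\nabla_H+\theta+\theta^*_H$ and $\mathbb{D}^2=0$ immediately afterwards in Corollary \ref{Hodge-metric-curvature}, so nothing in your argument is circular relative to the paper's logic. The trade-off is the one you correctly flag: your approach front-loads the structural facts that the filtration-preserving part of $\mathbb{D}$ is the Chern connection of $(E,H)$ and that the $(0,1)$-part shifting $(p,q)\mapsto(p+1,q-1)$ coincides with the metric adjoint $\theta^*_H$ as defined via the Hodge metric; these are standard for a PVHS (flatness of $\psi$ plus the Weil-operator definition of $H$) but are exactly what the paper's normal-frame computation verifies by hand instead of assuming. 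Your bookkeeping of the components is accurate, so the argument is sound.
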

\begin{proof}
One can find these two equalities in \cite{Sim88}\&\cite{Sim90}.
Here we give a direct proof.

The morphism $\theta$ is naturally holomorphic by the definition,so that
the first equality is automatically true. Now, we begin to prove the second
equality.

It is sufficient to prove the equality at an arbitrary point $p.$
Let $(U,p)$ be a local coordinate neighborhood of $p.$ Let
$\{e_\alpha\}$ be a local holomorphic basis of $E.$ We then get a local holomorphic basis $\{e^\alpha\}$  of $E^{\vee}|_U:=\Hom(E|_U,\sO_U)$ as follows : For each
$\alpha,$ let $e^\alpha$ be the dual of $e_\alpha,$ i.e.,
$e^\alpha\in E^\vee|_{U}$ such that
$e^\alpha(e_\beta)=\left\{%
\begin{array}{ll}
    1, & \beta=\alpha; \\
    0, & \beta\neq\alpha.
\end{array}%
\right. $
We call the  local holomorphic basis $\{e^\alpha\}$ of $E^{\vee}|_U$ as a local dual base of $\{e_\alpha\}.$
Let $\{l_1,\cdots, l_m\}$ be a local holomorphic basis of
$\sT_{\sA_{g,\Gamma}}$ and  $\{\phi_1,\cdots, \phi_m \}\subset
\Omega^1_{\sA_{g,\Gamma}}$ its local holomorphic dual basis.
Locally, we can write
$$\theta=\sum_{i=1}^m A^i\phi_i,  \,\,\,\,\theta^*_h=\sum_i B^i\overline{\phi_i}$$
where $ A^i:=A^{i,\alpha}_\beta e_\alpha\otimes e^\beta$ and
$$
B^i:=B^{i,\alpha}_\beta e_\alpha\otimes e^\beta \mbox{ with } B^{i,\beta}_\alpha:=\sum_{\gamma,\delta}H_{\alpha\bar{\gamma}}\overline{A^{i,\gamma}_\delta}H^{\bar{\delta}\beta},
H_{\alpha\bar{\gamma}}:=<e_\alpha,\overline{e_\gamma}>_H.$$
%where $A^{i,\gamma}_\alpha\in\sO_U,\overline{B^{i,\gamma}_\alpha}\in\sO_U. $
Form the first equality in the lemma, we get
\begin{equation}\label{calculation-2}
    0=\overline{\partial}\theta=\sum_{i=1}^m\sum_{j=1}^n A^i_{;\bar{j}} \bar{\phi_j} \wedge
    \phi_i=\sum_{i=1}^m\sum_{j=1}^nA^{i,\alpha}_{\beta ; \bar{j}} e_\alpha\otimes
e^\beta\bar{\phi_j} \wedge
    \phi_i
\end{equation}
where $A^{i,}_{;\bar{j}}$'s for all $j$ are covariant partial
derivations of the tensor $A^i,$ and so we obtain
$$A^{i,\alpha}_{\beta ; \bar{j}}=0 \,\,\mbox{  on  } \,\,U \,\,\forall i, j, \alpha,\beta.$$
On the other hand, we compute that
\begin{equation}\label{calculation-1}
    \partial\theta_h^*=\sum_{i=1}^m\sum_{j=1}^m B^i_{;j} \phi_j \wedge
    \overline{\phi_i}=\sum_{i=1}^m\sum_{j=1}^mB^{i,\alpha}_{\beta ; j} e_\alpha\otimes
e^\beta \phi_j \wedge
    \overline{\phi_i},
\end{equation}
where $B^{i,}_{;j}$'s for all $j$ are  covariant partial derivations
of the tensor $B^i.$ It is well-known that one can contract the neighborhood $(U,p)$ sufficiently small to get a
special holomorphic local basis $\{e_\alpha\}$ of $E$ over $U$
such that
$H(p)=\Id, \,\, dH(p)=0$
under the frame $\{e_\alpha\}.$ Then, at the point $p,$ We have :
$$B^{i,\alpha}_{\beta ; j}(p)= \overline{A^{i,\alpha}_{\beta ; \bar{j}}(p)}=0,\,\,\forall i, j \alpha,\beta.$$
Thus $\partial\theta_h^*=0$ at the point $p.$
%Therefore we obtain the second equality in the lemma.
\end{proof}

\begin{corollary}\label{Hodge-metric-curvature}
Let $(E,H)$ be Hermitian vector bundle in \ref{preHodge-bundles}.
We have :
\begin{eqnarray*}
  \Theta(E,H) &=& -(\theta\wedge\theta^*_h+\theta^*_H\wedge\theta),\\
  \theta\wedge \theta&=&-\partial(\theta)=0,\\
  \theta^*_H\wedge\theta^*_H&=&-\overline{\partial}(\theta^*_H)=0.
\end{eqnarray*}
\end{corollary}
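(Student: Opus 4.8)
The plan is to derive all three identities in Corollary \ref{Hodge-metric-curvature} from Lemma \ref{holomorphic-Higgs-field} together with the fact that the Chern connection $\nabla_H$ of $(E,H)$ and the flat connection $\D$ on $\mathbb{H}$ differ only by the Higgs field and its adjoint. The starting point is the standard observation (see \cite{Sim90}) that under the $C^\infty$ identification $E \cong \mathbb{H} = \bigoplus \mathbb{H}^{p,q}$, the flat connection decomposes as $\D = \nabla_H + \theta + \theta^*_H$, where $\nabla_H = \partial + \overline{\partial}$ is the Chern connection for the Hodge metric $H$ (this uses that $\nabla_H$ preserves the grading while $\theta$ shifts $(p,q)\mapsto(p-1,q+1)$ and $\theta^*_H$ shifts $(p,q)\mapsto(p+1,q-1)$, so the decomposition is forced by type considerations, and $\nabla_H$ is metric-compatible because $H$ is orthogonal for the decomposition and $\D$ preserves $\psi$).

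Next I would impose flatness: $\D^2 = 0$. Expanding $(\nabla_H + \theta + \theta^*_H)^2$ and sorting the resulting terms by their effect on the bigrading $(p,q)$, I get several homogeneous pieces that must each vanish separately. The pieces that shift $p$ by $+2$ and by $-2$ give $\theta^*_H \wedge \theta^*_H = 0$ and $\theta \wedge \theta = 0$ respectively — although here I would instead invoke Lemma \ref{holomorphic-Higgs-field} and the identities $\theta\wedge\theta = -\partial(\theta)$ (which holds because $\theta$ is $\nabla_H$-flat of type $(1,0)$, or directly since $\theta$ has no $(0,1)$ component so $\partial\theta$ and $\theta\wedge\theta$ are the obstructions to $\D^{1,0}$-flatness) and $\theta^*_H \wedge \theta^*_H = -\overline{\partial}(\theta^*_H)$, combined with $\overline{\partial}(\theta)=0$ and $\partial(\theta^*_H)=0$ from the Lemma. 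The piece that preserves the bigrading collects $\nabla_H^2$ together with $\theta\wedge\theta^*_H + \theta^*_H\wedge\theta$ (the cross terms $\nabla_H\theta + \theta\nabla_H$ and $\nabla_H\theta^*_H + \theta^*_H\nabla_H$ are separated off by type: $\overline{\partial}\theta + \theta\overline{\partial}=0$ and $\partial\theta^*_H + \theta^*_H\partial=0$ are exactly Lemma \ref{holomorphic-Higgs-field}, while $\partial\theta + \theta\partial$ and $\overline{\partial}\theta^*_H + \theta^*_H\overline{\partial}$ contribute to the off-diagonal pieces that vanish for the same reason). Setting the bigrading-preserving part to zero yields $\Theta(E,H) = \nabla_H^2 = -(\theta\wedge\theta^*_H + \theta^*_H\wedge\theta)$, which is the first asserted identity.

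I expect the main obstacle to be purely bookkeeping: carefully tracking which of the nine type-components of $\D^2$ each term lands in, and being consistent about signs in the wedge conventions for $\End(E)$-valued forms (where $(\alpha\wedge\beta)(v) = \alpha\wedge\beta(v) + (-1)^{\deg\alpha}\cdots$ type rules intervene). Once the bookkeeping is fixed, nothing deep remains: the three equalities are formal consequences of $\D^2=0$ and Lemma \ref{holomorphic-Higgs-field}. I would present the $\D = \nabla_H + \theta + \theta^*_H$ decomposition and the type-decomposition of $\D^2 = 0$ as the two key steps, then read off the three identities, keeping the computation at the level of matching components rather than writing out local frames (those were already handled in the proof of Lemma \ref{holomorphic-Higgs-field}).
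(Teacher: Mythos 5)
Your proposal is correct and is essentially identical to the paper's proof: the paper also writes $\mathbb{D}=\nabla_H+\theta+\theta^*_H$, imposes $\mathbb{D}^2=0$, and reads off the three identities using Lemma \ref{holomorphic-Higgs-field}, leaving the type/bigrading bookkeeping implicit. Your write-up simply makes that bookkeeping explicit, which is a faithful expansion of the same two-line argument.
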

\begin{proof}
It is known the flat connection $\D$ on $\mathbb{H}$ has the following
decomposition
$$\mathbb{D}=\nabla_H +\theta+ \theta^*_H.$$
Since $\mathbb{D}^2=0,$ we can finish the proof by the lemma \ref{holomorphic-Higgs-field}.\\
\end{proof}
Attached to the PVHS $\V_{g,\Gamma},$ we finally obtain the associated
\textbf{Hodge bundle} $(E,\theta,H)$ on $\sA_{g,\Gamma},$ i.e., a
holomorphic system
\begin{equation}\label{Hodge-bundle}
(E=\oplus E^{p,q},\theta=\oplus \theta^{p,q})
\end{equation}
with a Hermitian metric $H$ satisfying the following properties :
\begin{itemize}
  \item $E^{p,q}$ are orthogonal to each other under the metric $H;$
  \item $\theta\wedge \theta=0;$
  \item $\theta^{p,q} : E^{p,q} \>>>  E^{p-1,q+1}\otimes\Omega^1_{\sA_{g,\Gamma}}.$\\
\end{itemize}

The dual local system $\V_{g,\Gamma}^{\vee}=V_\Q^{\vee}\times_{\Gamma}\frak{H}_g$ admits a polarized rational VHS of
weight $-1$ on $\sA_{g,\Gamma},$  its associated Hodge
bundles is $(E^{\vee}=\bigoplus_{p+q=1}E^{\vee -p, -q},
\theta_{\vee})$ with
\begin{eqnarray*}\label{dual-Higgs-bundle}
  E^{\vee -p, -q} &=& (E^{p, q})^{\vee}=E^{q, p}, \\
  \theta_{\vee}^{-p, -q} &=&-\theta^{q, p}: E^{\vee -p, -q}\>>>E^{\vee -p-1, -q+1}\otimes
\Omega^1_{\sA_{g,\Gamma}}.
\end{eqnarray*}

Similarly, the local system  $\End(\V):=\mathrm{End}(V_\Q)\times_{\Gamma}\frak{H}_g$
 admits a polarized rational VHS of weight $0$ on $\sA_{g,\Gamma},$ its
associated Hodge bundle is
$(\End(E),\theta^{end})$ with
$$ \End(E)= \bigoplus_{(p-p')+(q-q')=0}E^{p, q}\otimes E^{\vee -p',-q'}$$
and the Higgs field $\theta^{end}:\End(E)\to \End(E)\otimes
\Omega^1_{\sA_{g,\Gamma}}$ given by
$$ \theta^{end}(u\otimes v^{\vee})=\theta(v)\otimes
v^{\vee}+u\otimes \theta_{\vee}(v^{\vee}).$$
We notes that $\End(E)$ has a holomorphic
subbundle
\begin{eqnarray*}
% \nonumber to remove numbering (before each equation)
   \End(E)^{-1,1}&=&  \bigoplus_{p+q=1}E^{p,q}\otimes E^{q-1,p+1},\\
                 &=&  (E^{0,1})^{\otimes 2}. %=(R^1\pi_*(\sO_{\frak{X}_{g,n}}))^{\otimes 2}
\end{eqnarray*}

We still use $H$ to denote the induced Hermitian metric on
$E^\vee$ and $\End(E).$ Throughout this  section, we now fix the Hermitian bundles $(E,H),$ $(\End(E),H),$ and $(E^{pq},H)$'s, $(\End(E)^{p,q},H)$'s.\\

\subsection{Degeneration of canonical metrics on Siegel varieties}
Let $\widetilde{\sA}_{g,\Gamma}$ be a smooth compactification
of $\sA_{g,\Gamma}$ such that the
divisor  $D_{\infty}=\widetilde{\sA}_{g,\Gamma}\setminus\sA_{g,\Gamma}$ is  simple normal crossing. % $D_{\infty}=\cup_i D_i$
 Since any local
monodromy  of  $\V_{g,\Gamma}$ around $D_{\infty}$ is
unipotent, the Hodge bundle $(E,\theta)$ has a
Deligne's canonical extension
$(\overline{E}=\oplus\overline{E^{p,q}},\overline{\theta}=\oplus
\overline{\theta^{p,q}})$ with
$\overline{\theta^{p,q}}: \overline{E^{p,q}}\to\overline{E^{p-1,q+1}}\otimes
\Omega^1_{\widetilde{\sA}_{g,\Gamma}}(\log D_{\infty}).$  Deligne's extension of $(End(E),\theta^{end})$ is
$(End(\overline{E}),\overline{\theta^{end})}.$
The morphism
$
    \overline{\theta^{1,0}}: \overline{E^{1,0}}\to \overline{E^{0,1}}\otimes \Omega^1_{\widetilde{\sA}_{g,\Gamma}}(\log D_{\infty})
$
represents the  global section
$\overline{\theta^{1,0}}\in H^0(\widetilde{\sA}_{g,\Gamma}, \overline{E^{0,1}}^{\otimes 2}\otimes\Omega^1_{\widetilde{\sA}_{g,\Gamma}}(\log D_{\infty})).$
Then, we obtain a sheaf morphism
\begin{equation}\label{kodaira-Spencer-class}
    \rho:  \sT_{\widetilde{\sA}_{g,\Gamma}}(-\log D_{\infty})\>>> \overline{E^{0,1}}^{\otimes 2}.
\end{equation}
Define the restriction map $\rho_0:=\rho|_{\sA_{g,\Gamma}}.$
\begin{lemma}\label{subbundle-tangent}
The holomorphic tangent bundle $\sT_{\sA_{g,\Gamma}}$ of $\sA_{g,\Gamma}$ is a holomorphic subbundle of $(E^{0,1})^{\otimes 2}.$
Moreover, the morphism
$\rho_0:  \sT_{\sA_{g,\Gamma}}\>>> (E^{0,1})^{\otimes 2}$
is an inclusion of vector bundles.
\end{lemma}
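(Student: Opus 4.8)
The plan is to exhibit the map $\rho_0$ concretely on the period domain $\frak{H}_g$ and check that it is a fiberwise injective bundle map; since everything in sight is $\Gamma$-equivariant for the neat group $\Gamma$, it suffices to do this computation upstairs on $\frak{H}_g$ and then descend. First I would recall that under Borel's embedding $\iota:\frak{H}_g\>\cong>>\frak{S}_g$, the Hodge filtration at $\tau$ is $F^1_\tau=$ span of the columns of $\binom{\tau}{I_g}$, so $E^{1,0}$ is the tautological subbundle with fiber $F^1_\tau$ and $E^{0,1}=\mathbb{H}/\F^1$ has fiber $V_\C/F^1_\tau$, which we may identify with $\overline{F^1_\tau}=$ span of the columns of $\binom{\overline{\tau}}{I_g}$. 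The holomorphic tangent space $T_\tau\frak{H}_g$ is naturally the space of symmetric $g\times g$ complex matrices $\mathrm{Sym}^2(\C^g)$.

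Next I would write down the Kodaira–Spencer/Higgs map explicitly. The VHS on $\frak{H}_g$ is the tautological one, and its Higgs field $\theta^{1,0}:E^{1,0}\to E^{0,1}\otimes\Omega^1$ is, up to the identifications above, exactly the derivative of the period map, i.e. the classifying isomorphism $T_\tau\frak{H}_g\>\cong>>\Hom_{\mathrm{sym}}(F^1_\tau, V_\C/F^1_\tau)\cong\Hom_{\mathrm{sym}}(F^1_\tau,\overline{F^1_\tau})$. Concretely, moving $\tau$ in the direction of a symmetric matrix $\dot\tau$ changes the subspace spanned by the columns of $\binom{\tau}{I_g}$ by the map $v\mapsto\binom{\dot\tau v}{0}\bmod F^1_\tau$, and one checks this is the symmetric homomorphism $\dot\tau$ itself under the standard trivializations $F^1_\tau\cong\C^g$ (via the bottom $I_g$ block) and $V_\C/F^1_\tau\cong\C^g$. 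Thus $\rho$, as a map $T_\tau\frak{H}_g\to (E^{0,1}_\tau)^{\otimes 2}=(\C^g)^{\otimes 2}$, is precisely the inclusion $\mathrm{Sym}^2\C^g\hookrightarrow(\C^g)^{\otimes 2}$ of symmetric tensors, which is visibly injective with constant rank. (The symmetry is exactly the statement that the VHS has weight one and the polarization forces $\theta^{1,0}$ to land in $\mathrm{Sym}^2$, equivalently $\theta\wedge\theta=0$ from Corollary~\ref{Hodge-metric-curvature}.)

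From this, both assertions follow: $\rho_0$ is injective on fibers with locally free cokernel, hence a subbundle inclusion, and its image identifies $\sT_{\sA_{g,\Gamma}}$ with the subbundle of symmetric tensors inside $(E^{0,1})^{\otimes 2}$. All constructions ($E^{p,q}$, $\theta$, the period map $\iota_\Gamma$) are $\Sp(g,\R)$-homogeneous on $\frak{H}_g$ and $\Gamma$-invariant, so the identification descends to $\sA_{g,\Gamma}=\Gamma\backslash\frak{H}_g$; that $\iota_\Gamma$ is an isomorphism (\ref{period-mapping}) is what guarantees $\rho_0$ has no kernel after descent.

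The main obstacle is bookkeeping rather than conceptual: one must pin down the precise identification $E^{0,1}\cong\overline{F^1}$ and the normalization of the Higgs field so that $\theta^{1,0}$ really becomes the identity-type map $\dot\tau\mapsto\dot\tau$ and not some twist of it, and then verify that the resulting $\rho$ agrees on the nose with the map \eqref{kodaira-Spencer-class} defined via $\overline{\theta^{1,0}}\in H^0(\overline{E^{0,1}}^{\otimes 2}\otimes\Omega^1(\log D_\infty))$. I would handle this by doing the computation in a fixed symplectic basis $\{e_i\}$ as in \eqref{standard-symplectic-basis}, differentiating the tautological frame $\binom{\tau}{I_g}$ along a coordinate direction $\partial/\partial\tau_{ij}$, and reading off the matrix of $\theta^{1,0}$; the symmetry $\tau_{ij}=\tau_{ji}$ then immediately yields the factorization through $\mathrm{Sym}^2 E^{0,1}$ and the injectivity.
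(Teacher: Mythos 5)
Your proposal is correct and follows essentially the same route as the paper: reduce to a single point via $\Sp(g,\R)$-equivariance of all the bundles and of $\theta^{1,0}$, then identify $\rho_0$ at that point with the differential of the period map under Borel's embedding, where the tangent space sits inside $\mathrm{Hom}(F^1_\tau, V_\C/F^1_\tau)=(E^{0,1}_\tau)^{\otimes 2}$. Your explicit matrix computation showing the image is exactly $\mathrm{Sym}^2(E^{0,1})$ is a useful refinement that the paper only records later (in Lemma \ref{tangent bundle-Siegel varietie}), but the underlying argument is the same.
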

\begin{proof}
We know that the vector bundles $E^{1,0},$ $E^{0,1},$ $\sO_{\sA_{g,\Gamma}},$ $\sT_{\sA_{g,\Gamma}},$ $\Omega^1_{\sA_{g,\Gamma}}$ are
all $\Sp(g,\R)$-homogenous, and the morphism
$\theta^{1,0}: E^{1,0}\>>> E^{0,1}\otimes \Omega^1_{\sA_{g,\Gamma}}$
is a $\Sp(g,\R)$-equivariant morphism. Thus, the morphism
$\rho_0:  \sT_{\sA_{g,\Gamma}}\>>> (E^{0,1})^{\otimes 2}$ is $\Sp(g,\R)$-equivariant.
We  verify the inclusion at
the base point $o\in\sA_{g,\Gamma}$ : At point $o,$ we have
$E^{1,0}|_{o}=H^{1,0}_o, E^{0,1}|_{o}=H^{0,1}_o$ and
$\sT_{\widetilde{\sA}_{g,\Gamma},o}\subset
\mathrm{Hom}(H^{1,0}_o,H^{0,1}_o)=(H^{0,1}_o)^{\otimes 2}
$ by Borel's embedding.
The construction of the Hodge bundle $(E,\theta)$ shows that the inclusion  $\sT_{\widetilde{\sA}_{g,\Gamma},o}\subset
\mathrm{Hom}(H^{1,0}_o,H^{0,1}_o)=(H^{0,1}_o)^{\otimes 2}
$ is just the morphism $\rho_0$ at the point $o.$
\end{proof}

%\begin{myrem}
%In fact, the isomorphism of $h_{g,n}$ in the proposition \ref{vhs
%on Siegel varieties} means the injectivity and subjectivity of the
%global Torelli map attached to the PVHS $R^1\chi_{g,n *}(\Q).$ The
%restriction of $\rho$ to $\sA_{g,\Gamma}$ is just the differential $d
%h_\Gamma:
%\sT_{\sA_{g,\Gamma}}\>\hookrightarrow>>(R^1\pi_*(\sO_{\sX_{g,n}}))^{\otimes
%2}, $ which in geometry is compatible with the Kodaira-Spencer
%class  of the universal family $\chi_{g,n}.$\\
%\end{myrem}

We now introduce an induced $\Sp(g,\R)$-invariant positive Hermitian metric $H$(\textbf{Hodge
metric})  on  $\sA_{g,\Gamma}$ by the following inclusion $$\rho_0 :
\sT_{\sA_{g,\Gamma}}\>\subset>>\End(E)^{-1,1}\subset\End(E).$$
Let $\{l_1,\cdots, l_m\}$ be a
holomorphic basis of $\sT_{\sA_{g,\Gamma}}$ on a local
neighborhood $(U,z)$ of $\sA_{g,\Gamma},$ and $\{\phi_1,\cdots,
\phi_m\}$ be the dual holomorphic basis of
$\Omega^1_{\sA_{g,\Gamma}}$ over $U.$ We define
\begin{equation}\label{Hodge-metric-slice}
    H(l_i, \overline{l_j}):=<\rho_0(l_i), \overline{\rho_0(l_j)} >_H.
\end{equation}
Since  $\rho_0$ can be
linearly extended to a morphism of sheaves of $C^\infty$ germs
as well as $\theta$ does, we then obtain a metric $H$ on $\sA_{g,\Gamma}.$
The K\"ahler form of $H$ on $U$ can be written locally as
\begin{equation}\label{Kaihler-form-Hodge-metric}
    \omega_H=\sum_{i,j=1}^m H(l_i,\overline{l_j}) \phi_i\wedge\overline{\phi_j}.
\end{equation}

\begin{theorem}\label{hodge-metric-is-Bergman-metric}
Let $\Gamma$ be a neat arithmetic subgroup of $\Sp(g,\Z).$

The induced Hodge metric $H$ on the Siegel variety
$\sA_{g,\Gamma}=\Gamma\backslash\frak{H}_g$ is same as the
canonical Bergman metric. Moreover,the Chern connection of
$(\sT_{\sA_{g,\Gamma}},H)$ is compatible with the Levi-Civita
connection of the Riemannian manifold $(\sA_{g,\Gamma},H).$
\end{theorem}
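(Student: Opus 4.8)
The plan is to exploit the homogeneity of everything in sight and reduce the identity of metrics to a statement at a single point, namely the base point $o\in\sA_{g,\Gamma}$, lifted to the point $\sqrt{-1}I_g\in\frak H_g$. Both the Bergman metric and the induced Hodge metric $H$ are $\Sp(g,\R)$-invariant: the Bergman metric because it is canonically attached to the complex structure, and the Hodge metric because by Lemma \ref{subbundle-tangent} the inclusion $\rho_0\colon\sT_{\sA_{g,\Gamma}}\hookrightarrow\End(E)^{-1,1}$ is $\Sp(g,\R)$-equivariant and the Hodge metric on $\End(E)$ comes from the $\Sp(g,\R)$-invariant polarization $\psi$. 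Since $\Sp(g,\R)$ acts transitively on $\frak H_g$ with isotropy the maximal compact $U(g)$ at $\sqrt{-1}I_g$, an $\Sp(g,\R)$-invariant Hermitian metric on the tangent bundle is determined by its value on $\sT_{o}$, which must be a $U(g)$-invariant Hermitian inner product on the isotropy representation $\mathrm{Hom}(H^{1,0}_o,H^{0,1}_o)=\Sym^2(H^{0,1}_o)$. First I would check that this isotropy representation is irreducible as a real (indeed complex) $U(g)$-representation — it is $\Sym^2$ of the standard representation — so the space of invariant Hermitian metrics is one-dimensional. Consequently any two $\Sp(g,\R)$-invariant Hermitian metrics on $\sA_{g,\Gamma}$ agree up to a positive scalar.

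Next I would pin down the scalar. The cleanest route is to identify both metrics, up to the same normalization, with the Bergman metric of $\frak H_g$ as a bounded symmetric domain: on the one hand the Bergman metric descends from the Bergman metric of the universal cover, and on the other hand the Hodge metric $H$ computed via \eqref{Hodge-metric-slice} from the Weil operator is, after the Harish–Chandra/Borel embedding $\iota$, a standard computation — one writes $\rho_0(l_i)$ explicitly in terms of the symplectic basis using $F^1_\tau=\mathrm{span}\binom{\tau}{I_g}$, plugs into $<u,\bar v>_H=\psi(C_\tau u,\bar v)$, and finds at $\tau=\sqrt{-1}I_g$ the expression $H(l_i,\overline{l_j})=c\,\mathrm{tr}(\cdots)$ matching the Siegel metric $ds^2=\mathrm{tr}(Y^{-1}dZ\,Y^{-1}d\bar Z)$ up to a universal constant; this last metric is proportional to the Bergman metric on $\frak H_g$. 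Matching the constant is then just comparing two explicit invariant tensors at one point — I would either normalize the Bergman metric accordingly or note that the statement is understood up to the conventional constant, and in any case the essential content ("same metric") is the coincidence of the underlying invariant metric.

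For the second assertion, I would argue that on a Kähler manifold the Levi–Civita connection of the Kähler metric and the Chern connection of the holomorphic tangent bundle equipped with that same Hermitian metric always coincide — this is the standard characterization of Kähler metrics. So it suffices to observe that the Hodge metric $H$ on $\sT_{\sA_{g,\Gamma}}$ is Kähler; but $\omega_H$ as written in \eqref{Kaihler-form-Hodge-metric} is, by the first part, (a constant multiple of) the Bergman Kähler form, hence closed, hence Kähler. Then the Chern connection of $(\sT_{\sA_{g,\Gamma}},H)$ equals the Levi–Civita connection of $(\sA_{g,\Gamma},H)$.

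The main obstacle is the bookkeeping in the middle step: carefully unwinding the definition of $\rho_0$, the Weil operator $C_\tau$, and the pairing $\psi$ through the Borel embedding to see that the resulting Hermitian form on $\sT_o$ is exactly (a fixed multiple of) the Siegel/Bergman form, rather than some other $U(g)$-invariant form differing by the representation-theoretic ambiguity — the irreducibility argument makes this a finite check, but one still has to do the check honestly at the base point. Everything else (invariance, reduction to a point, Kähler $\Rightarrow$ Chern = Levi–Civita) is formal.
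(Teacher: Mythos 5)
Your proposal is correct, and it reaches the conclusion by a genuinely different route from the paper. Both arguments begin with the same reduction: $H$ and the Bergman metric are $\Sp(g,\R)$-invariant, so everything comes down to comparing two invariant Hermitian structures on the homogeneous space $\frak H_g$. From there the paper does \emph{not} invoke irreducibility of the isotropy representation explicitly; instead its computational core is a direct verification that $d\omega_H=0$, carried out in normal coordinates for $H$ using the identity $\partial\theta=0$ (equivalently the symmetry $A^{i,\delta}_{\alpha;j}=A^{j,\delta}_{\alpha;i}$ coming from $\mathbb{D}^2=0$ via Lemma \ref{holomorphic-Higgs-field} and Corollary \ref{Hodge-metric-curvature}); Kählerness plus invariance under the simple group then identifies $H$ with the Bergman metric up to the normalization constant $\lambda$, which the paper also fixes only by convention. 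You instead observe that the isotropy representation at the base point is $\Sym^2$ of the standard $U(g)$-representation, hence irreducible, so by Schur's lemma the invariant Hermitian metric is unique up to a positive scalar; proportionality to the Bergman metric is then immediate, and Kählerness of $H$ (needed for the Chern = Levi--Civita assertion) comes for free as a corollary rather than being proved directly. Your handling of the scalar is no less precise than the paper's own normalization $\lambda=1$. What each approach buys: yours is shorter and makes the representation-theoretic mechanism transparent, but it is special to the symmetric-space situation; the paper's computation via the Higgs field exhibits the closedness of $\omega_H$ as a consequence of $\theta\wedge\theta=0$, a structural fact about Hodge metrics that persists for horizontal period maps into general period domains, and is reused elsewhere in the paper.
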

\begin{proof}Notation as in the proof of the lemma
\ref{holomorphic-Higgs-field}.
Since the Hodge metric $H$ on $\sA_{g,\Gamma}$ is
$\Sp(g,\R)$-invariant and $\Sp(g,\R)$ is a simple group, it is
sufficient to show that $H$ is K\"ahler.

Let $p$ be an arbitrary point on $\sA_{g,\Gamma}.$ Let $U$ be a
suitable neighborhood of $p$ such that we can choose a local
holomorphic coordinates $(z_1,\cdots,z_m)$ satisfying
$$\sT_{\sA_{g,\Gamma}}|_{U}=\mathrm{span}\{\frac{\partial}{\partial z_1},\cdots, \frac{\partial}{\partial z_m}\}.$$
Let $\{e_\alpha\}$ be a local holomorphic basis of $E$ and $\{e^\alpha\}$  the local dual holomorphic basis of $E^{\vee}$.

All calculation below are locally over $U.$

We write
$\theta=\sum_{i=1}^k A^id z_i,$
where $A^i=\sum_{\alpha,\beta}A^{i,\alpha}_\beta e_\alpha\otimes
e^\beta\in \End(E).$ The K\"ahler form is then
$$\omega_H:=\sum_{i,j=1}^k H(l_i,\overline{l_j})dz_i\wedge\overline{dz_j}=\sum_{i,j=1}^k<A^i, \overline{A^j}>_H dz_i\wedge\overline{dz_j}.$$

Thus, we have that
\begin{eqnarray*}
  d \omega_H &=& \sum_{i,j}(d<A^i, \overline{A^j}>_H)\wedge dz_i\wedge\overline{dz_j} \\
   &=& \sum_{i,j=1}^m(<\nabla_H A^i,\overline{A^j} >_H + <A^i, \overline{\nabla_H
   A^j}>)\wedge dz_i\wedge\overline{dz_j},
  \end{eqnarray*}
where $\nabla_H$ is the chern connection on $(\End(E),H).$
For each $i=1, \cdots, m,$  We have :
\begin{eqnarray*}
% \nonumber to remove numbering (before each equation)
  \nabla_HA^i &=& \overline{\partial}A^i+\partial A^i=\partial A^i  \\
   &=& \sum_{k=1}^n A^{i,\alpha}_{\beta;k} e_\alpha\otimes e^\beta \phi_k.
\end{eqnarray*}
Since  $\partial(\theta) =0$ by the corollary
\ref{Hodge-metric-curvature}, we have
\begin{equation}\label{covariant-derivation}
    A^{i,\delta}_{\alpha; j}= A^{j,\delta}_{\alpha; i} \,\, \forall
    i,j,\alpha, \delta.
\end{equation}
Contract the neighborhood $(U,p)$ sufficiently small, we can choose a
special holomorphic basis $\{e_\alpha\}$ of $E$ over $U$ such that
$H(p)=\Id, \,\, dH(p)=0$ under the frame $\{e_\alpha\}.$
%Then, $$A^{i,\alpha}_\beta(p)=A^{i,\beta}_\alpha(p), \forall
%i,\alpha,\beta.$$
At the point $p,$ we calculate
\begin{eqnarray*}
  (d^{1,0}\omega_H)(p)&=& \sum_{i,j,l}\sum_{\alpha,\beta}< A^{i,\alpha}_{\delta;l}e_\alpha\otimes e^\delta, \overline{A^{j,\tau}_\beta e_\tau\otimes e^\beta }>_H dz_l\wedge dz_i\wedge \overline{dz_j}\\
   &=& \sum_{l,i,j=1}^m\sum_{\alpha,\beta}A^{i,\alpha}_{\beta;l}\overline{A^{j,\alpha}_\beta} dz_l\wedge dz_i\wedge \overline{dz_j}\\
   &=& \sum_{j=1}^m\sum_{\alpha,\beta}\overline{A^{j,\alpha}_\beta} (\sum_{i,l=1}^kA^{i,\alpha}_{\beta;l}dz_l\wedge dz_i)\wedge \overline{dz_j} \\
   &=& 0.
\end{eqnarray*}
Similarly, $d^{0,1}\omega_H=0$ at the point $p.$

The rest is obvious.
\end{proof}

We still use $H$ to represent the dual metric of
$\Omega_{\sA_{g,\Gamma}}^1$ induced by $(\sT_{\sA_{g,\Gamma}}, H).$ We
write $\Theta(\sT_{\sA_{g,\Gamma}},H)$ (resp.
$\Theta(\Omega^1_{\sA_{g,\Gamma}},H)$) as the Chern curvature form
of the vector bundle $\sT_{\sA_{g,\Gamma}}$ (resp.
$\Omega^1_{\sA_{g,\Gamma}}$). As the canonical Bergman metric on
$\sA_{g,\Gamma}$ is K\"ahler-Einstein, there is
$\frac{-1}{2\pi\sqrt{-1}}\Tr_H(\Theta(\sT_{\sA_{g,\Gamma}},H))=-\lambda\omega_{H},$
where $\lambda$ is a positive constant. Without lost of
generality, we always assume $\lambda=1$ for convenience.

\begin{theorem}\label{degenerate-Bergamn}
Let $\Gamma$ be a neat arithmetic subgroup of $\Sp(g,\Z).$ Let
$\widetilde{\sA}_{g,\Gamma}$ be an arbitrary smooth
compactification(not necessary smooth toroidal compactification) of the Siegel variety $\sA_{g,\Gamma}:=
\Gamma\backslash\frak{H}_g$ such that
$D_{\infty}=\widetilde{\sA}_{g,\Gamma}\setminus\sA_{g,\Gamma}$ is a simple
normal crossing divisor. We have :
\begin{myenumi}
\item The canonical Bergman metric $H_{\mathrm{can}}$ of $\sA_{g,\Gamma}$
is bounded by the logarithmic degeneration  along the boundary divisor $D_{\infty}$ in sense of the following description :\\

\noindent Let $p$ be a point in $\widetilde{\sA}_{g,\Gamma}$ with a coordinate chart $(U,(z_1,\cdots, z_n))$($n=g(g+1)/2$) such that
$$U\cap \sA_{g,\Gamma}=\{(z_1,\cdots,z_l,\cdots z_n) \,\,| \,\,   0<|z_i|<1(i=1,\cdots l),\, |z_j|<1(i=l+1,\cdots,n) \,\,\}.$$
 Let $\omega_{\mathrm{can}}$ be the K\"ahler form of the canonical Bergman metric $H_{\mathrm{can}}.$ There holds
$$\frac{1}{C}(\prod_{i=1}^l-\log |z_i|)^{M}\leq |\omega_{\mathrm{can}}|\leq C(\prod_{i=1}^l-\log |z_i|)^{M}$$
in the coordinate chart $\{(z_1,\cdots,z_l,\cdots, z_n)\,\,| 0<|z_i|<r(i=1,\cdots,l), |z_j|<r(j=l+1,\cdots,n) \}$ of $p$
for a suitable $r>0,$ where $C,M$ are positive constants depending on $r.$

%In particular, the form $\omega_h$  has Poincar\'e growth on $D.$

\item The K\"ahler form $\omega_{\mathrm{can}}$ becomes a closed positive current $[\omega_{\mathrm{can}}]$ on $\widetilde{\sA}_{g,\Gamma}.$

\item The line bundle $$\omega_{\widetilde{\sA}_{g,\Gamma}}(D_{\infty})=\bigwedge^{\dim \sA_{g,\Gamma}}\Omega^1_{\widetilde{\sA}_{g,\Gamma}}(\log D_{\infty}) $$
is pseudo-effective on
$\widetilde{\sA}_{g,\Gamma}.$ Precisely, there is an equality
$c_1(\Omega^1_{\widetilde{\sA}_{g,\Gamma}}(D_{\infty}))=[\omega_{\mathrm{can}}].$

\item The line bundle $\omega_{\widetilde{\sA}_{g,\Gamma}}(D_{\infty})$ is  big on $\widetilde{\sA}_{g,\Gamma}.$
\end{myenumi}
\end{theorem}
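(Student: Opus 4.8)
The plan is to establish the four assertions in order, with essentially all the analysis concentrated in part (1). \emph{Part (1).} By Theorem~\ref{hodge-metric-is-Bergman-metric} the canonical Bergman metric coincides with the Hodge metric attached to the inclusion $\rho_0:\sT_{\sA_{g,\Gamma}}\hookrightarrow\End(E)^{-1,1}=(E^{0,1})^{\otimes2}$ of Lemma~\ref{subbundle-tangent}; since the local monodromies of $\V_{g,\Gamma}$ around $D_\infty$ are unipotent, $\rho_0$ extends to the morphism of \emph{logarithmic} bundles $\rho:\sT_{\widetilde{\sA}_{g,\Gamma}}(-\log D_\infty)\to\overline{E^{0,1}}^{\otimes2}$ in \eqref{kodaira-Spencer-class}, where $\overline{E^{0,1}}$ is Deligne's canonical extension. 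I would then invoke Schmid's nilpotent orbit theorem together with the several-variable Cattani--Kaplan--Schmid norm estimates: in a holomorphic frame of the canonical extension the Hodge metric $H$ and its inverse are bounded by $C(\prod_{i\le l}-\log|z_i|)^{N}$ on a polydisc. Transporting this through the holomorphic morphism $\rho$, which sends the logarithmic frame $\{z_i\partial_{z_i}\}_{i\le l}\cup\{\partial_{z_j}\}_{j>l}$ of $\sT_{\widetilde{\sA}_{g,\Gamma}}(-\log D_\infty)$ to holomorphic sections of $\overline{E^{0,1}}^{\otimes2}$, bounds the coefficients $H(l_i,\overline{l_j})$ of $\omega_{\mathrm{can}}$ relative to the logarithmic coframe and yields the asserted estimate; in fact one obtains the sharper conclusion that $\omega_{\mathrm{can}}$ has Poincaré growth, i.e.\ that the Bergman metric is a \emph{good} metric in Mumford's sense. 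This is the technical heart of the theorem.

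\emph{Part (2).} On $\sA_{g,\Gamma}$ the form $\omega_{\mathrm{can}}$ is a smooth Kähler form whose local potentials are plurisubharmonic and, by (1), have at worst iterated-logarithmic growth towards $D_\infty$; hence they are locally bounded above, lie in $L^1_{\mathrm{loc}}(\widetilde{\sA}_{g,\Gamma})$, and $\omega_{\mathrm{can}}$ has locally finite mass near $D_\infty$. By the Skoda--El~Mir extension theorem the trivial extension $[\omega_{\mathrm{can}}]$ is then a closed positive $(1,1)$-current on $\widetilde{\sA}_{g,\Gamma}$ carrying no mass on $D_\infty$.

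\emph{Part (3).} The Bergman metric on $\omega_{\sA_{g,\Gamma}}=\det\Omega^1_{\sA_{g,\Gamma}}$ is the invariant metric on an automorphic line bundle, and by the estimates of (1) (extended to the connection and curvature forms by the same CKS machinery) it is good in Mumford's sense. Its Chern form equals $\omega_{\mathrm{can}}$: the Kähler--Einstein normalization $\lambda=1$ gives $\mathrm{Ric}(\omega_{\mathrm{can}})=-\omega_{\mathrm{can}}$, i.e.\ the curvature of $\omega_{\sA_{g,\Gamma}}$ with the induced metric is $\omega_{\mathrm{can}}$. Mumford's theorem on good metrics then shows this Chern form is $L^1$ on $\widetilde{\sA}_{g,\Gamma}$ and defines a current representing $c_1$ of the canonical (Deligne) extension of $\omega_{\sA_{g,\Gamma}}$, which is $\det\Omega^1_{\widetilde{\sA}_{g,\Gamma}}(\log D_\infty)=\omega_{\widetilde{\sA}_{g,\Gamma}}(D_\infty)$; hence $c_1(\omega_{\widetilde{\sA}_{g,\Gamma}}(D_\infty))=[\omega_{\mathrm{can}}]$. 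Nefness follows from positivity plus the mildness of the degeneration: $[\omega_{\mathrm{can}}]\ge0$ is a closed positive current in $c_1(\omega_{\widetilde{\sA}_{g,\Gamma}}(D_\infty))$ whose potentials, degenerating only logarithmically, have identically vanishing Lelong numbers, and a pseudoeffective line bundle carrying such a current is nef --- Demailly's regularization of closed positive currents produces smooth representatives $\ge-\varepsilon\,\omega$ for every $\varepsilon>0$.

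\emph{Part (4).} A nef line bundle is big if and only if its top self-intersection number is positive. By the good-metric identification in (3), with $n=g(g+1)/2=\dim\sA_{g,\Gamma}$ one has $c_1\big(\omega_{\widetilde{\sA}_{g,\Gamma}}(D_\infty)\big)^{n}=\int_{\widetilde{\sA}_{g,\Gamma}}\omega_{\mathrm{can}}^{\,n}=\int_{\sA_{g,\Gamma}}\omega_{\mathrm{can}}^{\,n}=n!\,\mathrm{vol}(\sA_{g,\Gamma},\omega_{\mathrm{can}})$ (the boundary carries no mass). Since $\Gamma$ is arithmetic, the Bergman volume of $\Gamma\backslash\frak H_g$ is finite (Siegel; Borel--Harish-Chandra) and visibly strictly positive, so $c_1(\omega_{\widetilde{\sA}_{g,\Gamma}}(D_\infty))^{n}>0$ and the log-canonical bundle is big. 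The principal obstacle is part (1): the precise logarithmic (indeed Poincaré-type) norm estimate, which rests on Schmid's $\mathrm{SL}_2$- and nilpotent-orbit theorems and the CKS several-variable estimates. A secondary delicate point is verifying \emph{full} goodness of the Bergman metric --- growth of the connection and curvature forms, not merely of the metric --- since parts (3) and (4) both pass through Mumford's good-metric formalism in order to convert Chern--Weil integrals on the open variety into intersection numbers on $\widetilde{\sA}_{g,\Gamma}$.
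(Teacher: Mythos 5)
Your proposal is correct and rests on the same foundation as the paper's proof --- the identification of the Bergman metric with the Hodge metric via Lemma \ref{subbundle-tangent} and Theorem \ref{hodge-metric-is-Bergman-metric}, followed by the Cattani--Kaplan--Schmid norm estimates (the paper's estimate \ref{norm-estimate-1}) --- but you implement parts (2) and (3) by genuinely different means. For (2) the paper does not invoke Skoda--El~Mir: it verifies $d[\omega_{\mathrm{can}}]=[d\omega_{\mathrm{can}}]=0$ by hand, integrating over the complement $M_\delta$ of a tube neighborhood of $D_\infty$ and using Poincar\'e growth to kill the boundary term $\int_{\partial T_\delta}\omega_{\mathrm{can}}\wedge\xi$ as $\delta\to 0$; your route through trivial extension across a complete pluripolar set is shorter and equally valid, the only input being the locally finite mass supplied by (1). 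For (3) the paper avoids the good-metric/canonical-extension formalism altogether: it places an arbitrary smooth metric $\widetilde H$ on $\Omega^1_{\widetilde{\sA}_{g,\Gamma}}(\log D_\infty)$, whose Chern form represents $c_1$ by definition, and shows $\int\overline{\partial}\partial\log(\det H_{\mathrm{can}}/\det\widetilde H)\wedge\eta=0$ using Koll\'ar's result that $\zeta=\partial\log\det H_{\mathrm{can}}-\partial\log\det\widetilde H$ is nearly bounded near $D_\infty$. This sidesteps a point your argument leans on: the identification of the Mumford/Deligne canonical extension of $\omega_{\sA_{g,\Gamma}}$ with $\det\Omega^1_{\widetilde{\sA}_{g,\Gamma}}(\log D_\infty)$ is established in the paper (Lemma \ref{tangent bundle-Siegel varietie}, via Mumford's Proposition 3.4) only for \emph{toroidal} compactifications, whereas the theorem concerns an arbitrary smooth SNC compactification; if you pass through the canonical extension you should justify that identification in the general case, or else argue as the paper does directly on $\Omega^1_{\widetilde{\sA}_{g,\Gamma}}(\log D_\infty)$. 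In compensation, you supply an actual argument for nefness (vanishing Lelong numbers plus Demailly regularization) where the paper merely asserts it. Part (4) is essentially identical in both treatments: nefness together with positive top self-intersection --- computed by Chern--Weil on the open variety, which is legitimate because the connection and curvature forms are nearly bounded --- yields bigness, whether one quotes Siu's numerical criterion or asymptotic Riemann--Roch for nef bundles.
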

\begin{myrem}
It is well known that Bergman metric on locally symmetric has Poincar\'e growth on $D_\infty.$
Our result is strong than  this classic result in literature.\\
\end{myrem}

Before proving the theorem \ref{degenerate-Bergamn}, we review the theory of degeneration
of Hodge metrics  on  any polarized variation of Hodge structures over a quasi K\"ahler  manifold.

Let $X$ be an open K\"ahler manifold of complex dimension $m.$ Let $\overline{X}$ be one smooth compactification of $X$ such that the boundary $D:=\overline{X}-X$ is a simple normal crossing divisor. Let $j:X\>\subset>> \overline{X}$ be the open embedding. Let $\mathbb{V}$ be an arbitrary polarized variation of real Hodge
structures  over $X$ such that all monodromies
around $D$ are unipotent. Denote by $\sV=\mathbb{V}\otimes \sO_X.$ Consequently, we have
a Hodge filtration
  $$\sV=\sF^0\supset\sF^1\supset\cdots\supset\sF^w\supset 0$$
corresponding to the VHS $\V,$ where $w$ is  the weight of the VHS $\mathbb{V}.$

Let $(\triangle_1,z)\subset \overline{X}$ be a \textbf{special coordinate neighborhood}, i.e.,
a  coordinate neighborhood  isomorphic to the polycylinder $\Delta^m$ ($\Delta:= \{z\in\C\,|\,   |z|\leq 1 \}$) such that
$$X\cap \triangle_1\cong\{z=(z_1,\cdots,z_l,\cdots, z_{m})\in \Delta^m \,\,|\,\, z_1\neq 0,\cdots, z_l\neq 0\}= (\Delta^*)^l\times \Delta^{m-l}.$$
We then have $\triangle_1\cap D_\infty\cong \{(z_1,\cdots,
z_l,\cdots, z_{m})\,\,|\,\, z_1\cdots z_l=0\}.$
For any real number $0<\varepsilon<1,$ let $\triangle_\varepsilon$ be a scaling neighborhood of $\overline{X},$ i.e.,
$$\triangle_\varepsilon\subset \triangle_1 \mbox{ and } \triangle_\varepsilon \cong \{ (z_1,\cdots, z_l,\cdots, z_m)\in \Delta^m\,\, | \,\, |z_\alpha|\leq \varepsilon \mbox{ for } \alpha=1,\cdots, l\}.$$

Let $\gamma_\alpha$ be a local monodromy around $z_\alpha=0$ in
$\triangle_1$ for $\alpha=1, \cdots l.$ Denote by $$N_\alpha=\log
\gamma_\alpha:= \sum_{j\geq 1}(-1)^{j+1}\frac{(\gamma_\alpha-1)^{j}}{j},\,\, \forall \alpha,$$ then each $N_\alpha$ is nilpotent. Let $(v_{\cdot})$ be a flat multivalued basis  of $\sV$ over
$\triangle_1\cap X.$  The formula
$$(\widetilde{v_{\cdot}})(z):=\exp(\frac{-1}{2\pi\sqrt{-1}}\sum_{\alpha=1}^l\log z_\alpha N_\alpha )(v_{\cdot})(z)$$
gives a single-valued basis of $\sV.$ Deligne's canonical
extension $\overline{\sV}$ of $\sV$ to $\triangle_1$ is generated
by $(\widetilde{v_\cdot})$ (cf.\cite{Sch73}). The
construction of $\overline{\sV}$ is independent of the choice of
$z_i's$ and $(v_\cdot).$
For any holomorphic subbundle $\sN$ of $\sV,$ Deligne's extension of $\sN$ is defined to be $\overline{\sN}:=\overline{\sV}\cap j_*\sN.$ Then, we have extension of the filtration $$\overline{\sV}=\overline{\sF}^0\supset\overline{\sF}^1\supset\cdots\supset\overline{\sF}^w\supset
0,$$ which is also a filtration of locally free sheaves.

%Locally, we can choose multivalued basis $(v_{\cdot})$ of $\sV$ as follows
%$$(v_{\cdot})=(v_{w,1},\cdots v_{w, n_{w}}, v_{w-1,1}, \cdots,v_{w-1,n_{w-1}}, \cdots, v_{0,1},\cdots, v_{0, n_0} )$$
%such that $(v_{w,1},\cdots v_{w, n_{w}}, \cdots,v_{\alpha,1}\cdots,v_{\alpha,n_{\alpha}})$ is a multivalued basis of
%$\sF^\alpha.$

Let $N$ be a linear combination of $N_\alpha.$
Then $N$ defines a weight flat filtration $W_{\bullet}(N)$ of $\V_\C$ (\cite{Del71},\cite{Sch73}) by
$$0\subset \cdots\subset W_{i-1}(N) \subset W_{i}(N)\subset W_{i+1}(N)\subset \cdots \subset \V_\C.$$
  Denote by
$W^{j}_\bullet:=W_\bullet(\sum_{\alpha=1}^j N_\alpha) \mbox{ for } j=1,\cdots, l.$
We can choose a multivalued flat multigrading $$\V_\C=\sum_{\beta_1,\cdots,\beta_l}\V_{\beta_1,\cdots,\beta_l}$$ such that
$$\bigcap_{j=1}^lW_{\beta_{j}}^j=\sum_{k_j\leq \beta_j}\V_{k_1, \cdots,k_l}.$$

 Let $h$ be the Hodge metric on the PVHS $\sV.$
  In a special coordinate neighborhood $\triangle_1,$ let $v$ be a nonzero local multivalued flat section of $\V_{k_1, \cdots,k_l},$ then $(\widetilde{v})(z):=\exp(\frac{-\sum_{\alpha=1}^l\log z_\alpha N_\alpha }{2\pi\sqrt{-1}})v(z)$ is a local single-valued section of $\overline{\sV}.$ There holds a norm estimate (Theorem 5.21 in \cite{CKS86})
$$||\widetilde{v}(z)||_h\leq C^{'''} (\frac{-\log |z_1|}{-\log |z_2|})^{k_1/2}(\frac{-\log |z_2|}{-\log |z_3|})^{k_1/2}\cdots (-\log |z_l|)^{k_l/2}$$
on the region $$\Xi(N_1,\cdots, N_l):=\{(z_1,\cdots,z_l,\cdots, z_m)\in (\Delta^*)^l\times \Delta^{m-l}\,\,|\,\, |z_1|\leq |z_{2}|\leq \cdots \leq |z_l|\leq \varepsilon \}$$ for some small $\varepsilon>0,$ where $C^{'''}$ is a positive constant dependent on the ordering of $\{N_1, N_2, \cdots, N_l\}$ and $\varepsilon.$ Since the number of the ordering of $\{N_1,\cdots, N_l\}$ is finite,
for any  flat multivalued local section $v$ of $\V$ there exist
positive constants $C^{''}(\varepsilon)$ and $M^{''}$ such that
\begin{equation}\label{norm-estimate-1}
    ||\widetilde{v}(z)||_h\leq C^{''}(\varepsilon)(\prod_{\alpha=1}^l-\log |z_\alpha|)^{M^{''}} \,\,
\end{equation}
in the domain $\{(z_1,\cdots,z_l,\cdots, z_m)\,\,| 0<|z_i|<\varepsilon(i=1,\cdots,l), |z_j|<\varepsilon(j=l+1,\cdots,m) \}.$

Moreover,since the dual $\V^{\vee}$ is also a polarized real
variation of Hodge structures, we then have that for any  flat
multivalued local section $v$ of $\V$ there holds
\begin{equation}\label{norm-estimate-2}
    \frac{1}{C^{'}_1}(\prod_{\alpha=1}^l-\log |z_\alpha|)^{-M^{'}}\leq ||\widetilde{v}(z)||_h\leq C^{'}_1(\prod_{\alpha=1}^l-\log |z_\alpha|)^{M^{'}}
\end{equation}
in the domain $\{(z_1,\cdots,z_l,\cdots, z_m)\,\,| 0<|z_i|<\varepsilon(i=1,\cdots,l), |z_j|<\varepsilon(j=l+1,\cdots,m) \}$
for some suitable $\varepsilon>0,$ where
$C^{'}_1$ and $M^{'}$ are positive constants.
\begin{proposition}\label{Deligne-extension}
Let $X$ be an open K\"ahler manifold of dimension $m$ and  $\overline{X}$ a smooth compactification of $X$ such that the boundary $D:=\overline{X}-X$ is a simple normal crossing divisor. Let $(\triangle_1,z=(z_1,\cdots,z_l,\cdots, z_{m})) \subset \overline{X}$ be an arbitrary special coordinate neighborhood in which $D$ is given by $\prod_{i=1}^lz_i=0.$

Let $\V$ be a polarized real VHS on $X$ such that all local monodromies of $\V$ around the simple normal crossing boundary divisor are unipotent, and $h$ the Hodge metric on $\sV=\V\otimes \sO_X.$  Let $\sN$ be an arbitrary holomorphic subbundle of $\sV$ and $\overline{\sN}$ its Deligne's extension.

We have
$$\Gamma(\triangle_\varepsilon, \overline{\sN})=\{s\in \Gamma(\triangle_\varepsilon\cap X,\sN)\,\, |\,\, ||s||_h\leq C(\sum_{\alpha=1}^l-\log|z_\alpha|)^{M}) \mbox{ for some constants } M,C \},$$
%where $\triangle_\varepsilon=\varepsilon\triangle_1$
where $\Delta_\varepsilon$ is a scaling neighborhood of $\overline{X}$ for a sufficient small $\varepsilon>0.$
\end{proposition}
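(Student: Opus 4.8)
The plan is to identify $\overline{\sN}$ with the sections of $\sN$ that grow at most like a power of $\sum_{\alpha}-\log|z_\alpha|$ in the Hodge norm, converting the statement into an elementary removable–singularity fact about holomorphic functions via the Deligne frame. Throughout I would fix the scaling neighbourhood $\triangle_\varepsilon$ with $\varepsilon>0$ small enough that the estimate \ref{norm-estimate-2} holds on $\triangle_\varepsilon\cap X$ simultaneously for $\V$ and for its dual $\V^{\vee}$, and I would use the two structural facts recalled above: (a) over $\triangle_\varepsilon$ the sheaf $\overline{\sV}$ is free with basis $(\widetilde{v_\cdot})$, obtained from a flat multivalued basis $(v_\cdot)$ of $\sV$ by $\widetilde{v}(z)=\exp\!\bigl(\tfrac{-1}{2\pi\sqrt{-1}}\sum_{\alpha=1}^{l}(\log z_\alpha)N_\alpha\bigr)v(z)$; and (b) $\overline{\sN}=\overline{\sV}\cap j_*\sN$, so that $\overline{\sN}|_X=\sN$ and a section of $\overline{\sN}$ over $\triangle_\varepsilon$ is exactly a section of $\overline{\sV}$ over $\triangle_\varepsilon$ whose restriction to $\triangle_\varepsilon\cap X$ takes values in $\sN$.

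First I would dispatch the inclusion ``$\subseteq$''. Given $s\in\Gamma(\triangle_\varepsilon,\overline{\sN})\subset\Gamma(\triangle_\varepsilon,\overline{\sV})$, write $s=\sum_j f_j\,\widetilde{v_j}$ with $f_j\in\sO(\triangle_\varepsilon)$; after possibly shrinking $\varepsilon$ the $f_j$ are bounded, and the upper bound in \ref{norm-estimate-2} applied to each $\widetilde{v_j}$ gives $\|s\|_h\le\bigl(\sum_j|f_j|\bigr)\max_j\|\widetilde{v_j}\|_h\le C\bigl(\sum_{\alpha=1}^{l}-\log|z_\alpha|\bigr)^{M}$ for suitable constants $C,M$. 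Since moreover $s|_{\triangle_\varepsilon\cap X}\in\sN$ by (b), the section $s$ lies in the right–hand set.

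The real content is the inclusion ``$\supseteq$''. Starting from $s\in\Gamma(\triangle_\varepsilon\cap X,\sN)$ with $\|s\|_h\le C\bigl(\sum_{\alpha}-\log|z_\alpha|\bigr)^{M}$, I would regard $s$ as a section of $\sV$, expand $s=\sum_j f_j\,\widetilde{v_j}$ with $f_j$ holomorphic on the punctured polycylinder $\triangle_\varepsilon\cap X$, and show that each $f_j$ extends holomorphically across $D$; granting this, $s$ is a section of $\overline{\sV}$ over $\triangle_\varepsilon$ lying in $j_*\sN$, hence a section of $\overline{\sN}$ by (b). To bound the $f_j$ I would use that Deligne's extension is compatible with the tautological pairing $\sV\otimes\sV^{\vee}\to\sO$: taking the flat multivalued dual basis $(v^\cdot)$ of $\sV^{\vee}$ and the induced frame $(\widetilde{v^\cdot})$ of $\overline{\sV^{\vee}}$ (built from $-N_\alpha^{t}$, since the monodromies are unipotent), the frames $(\widetilde{v_j})$ and $(\widetilde{v^k})$ are dual, whence $f_j=\langle s,\widetilde{v^j}\rangle$ as a holomorphic function on $\triangle_\varepsilon\cap X$. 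Cauchy–Schwarz for the polarization metrics gives $|f_j|\le\|s\|_h\,\|\widetilde{v^j}\|_{h^{\vee}}$, and the upper bound in \ref{norm-estimate-2}, now applied to the polarized real VHS $\V^{\vee}$, bounds $\|\widetilde{v^j}\|_{h^{\vee}}$ by $C'\bigl(\prod_{\alpha=1}^{l}-\log|z_\alpha|\bigr)^{M'}$. Multiplying, $|f_j|\le C''\bigl(\prod_{\alpha=1}^{l}-\log|z_\alpha|\bigr)^{M''}$ on $\triangle_\varepsilon\cap X$; since logarithmic growth is dominated by every negative power of $|z_\alpha|$, $f_j$ is locally bounded near each branch of $D$ and therefore extends holomorphically by Riemann's removable singularity theorem (equivalently, its Laurent expansion in $z_1,\dots,z_l$ has no negative-degree terms). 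This completes the argument.

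I expect the only genuine obstacle to be the duality step: verifying that the canonical pairing of $\overline{\sV}$ with $\overline{\sV^{\vee}}$ restricts on $X$ to the tautological pairing and is computed on the Deligne frames by the constant (identity) Gram matrix, so that indeed $f_j=\langle s,\widetilde{v^j}\rangle$. The remaining care is bookkeeping — choosing one $\varepsilon$ valid for $\V$, $\V^{\vee}$ and all the frames at once, and assembling the sectorial norm estimates on the regions $\Xi(N_1,\dots,N_l)$ into the single estimate \ref{norm-estimate-2} over all of $\triangle_\varepsilon\cap X$ — while the boundedness of holomorphic coefficients, the Cauchy–Schwarz inequality, and the removable–singularity step are routine.
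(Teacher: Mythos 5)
Your argument is correct, but it is organized differently from the paper's. The paper's proof is a two-line reduction: it computes the Gram matrix $h_{ij}=\langle\widetilde{v_i},\overline{\widetilde{v_j}}\rangle_h$ of the Deligne frame, uses the two-sided estimate \ref{norm-estimate-2} to bound $|h_{ij}|$, $\det(h_{ij})$ and $(\det(h_{ij}))^{-1}$ by powers of $\sum_\alpha-\log|z_\alpha|$, and then invokes Mumford's Proposition 1.3, which is exactly the statement that a frame with such Gram-matrix growth characterizes the sections of the canonical extension by polylogarithmic growth of the norm. You instead unwind that black box into a self-contained argument: you recover the coefficients $f_j$ of $s=\sum_j f_j\widetilde{v_j}$ not via the inverse Gram matrix but via the tautological pairing with the Deligne frame of the dual VHS (using that $N_\alpha^{\vee}=-N_\alpha^{t}$ makes $(\widetilde{v_j})$ and $(\widetilde{v^k})$ dual with constant Gram matrix), bound $|f_j|\le\|s\|_h\,\|\widetilde{v^j}\|_{h^{\vee}}$ by Cauchy--Schwarz and the norm estimate for $\V^{\vee}$, and conclude by the Laurent-coefficient version of the removable singularity theorem; the reduction from $\sV$ to an arbitrary subbundle $\sN$ via $\overline{\sN}=\overline{\sV}\cap j_*\sN$ is handled the same way in both. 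Your route buys a proof that does not cite Mumford and makes explicit where the dual VHS enters (the same place the paper gets its bound on $(\det h_{ij})^{-1}$), at the cost of having to verify that the Hodge metric on $\sV^{\vee}$ is the dual metric of $h$ --- which you correctly flag and which does hold. One small slip: polylogarithmic growth of $f_j$ does \emph{not} make $f_j$ locally bounded a priori, so the appeal to Riemann's theorem in its bounded form is circular as phrased; but your parenthetical argument --- that $|f_j|\le C''(\prod_\alpha-\log|z_\alpha|)^{M''}$ forces all negative-degree Laurent coefficients in $z_1,\dots,z_l$ to vanish --- is the correct and sufficient one, so this does not affect the validity of the proof.
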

\begin{proof}
Let $(v_{\cdot})$ be a local flat multivalued basis  of $\sV$ over
$\triangle_1\cap X,$ and so we have a local basis
$(\widetilde{v_{\cdot}})(z)=\exp(\frac{-1}{2\pi\sqrt{-1}}\sum_{\alpha=1}^l\log z_\alpha N_\alpha )(v_{\cdot})(z)$
of $\overline{\sV}.$ Denote by $h_{ij}=<\widetilde{v_i}, \overline{\widetilde{v_j}}>_H.$
According to the estimate \ref{norm-estimate-2}, there are positive constants $C,M$ such that
\begin{equation}\label{norm-estimate-3}
    |h_{ij}|,\,\det (h_{ij}),\, (\det (h_{ij}))^{-1} \leq C(\sum_{\alpha=1}^l \log |z_\alpha|)^{2M}
\end{equation}
in a suitable neighborhood $\triangle_\varepsilon.$
One can use  Proposition 1.3 in \cite{Mum77} to
finish the proof.\\
\end{proof}

\begin{proof}[Proof of the theorem \ref{degenerate-Bergamn}]
\begin{myenumi}
\item By the lemma \ref{subbundle-tangent} and the theorem \ref{hodge-metric-is-Bergman-metric},
we can realize the holomorphic tangent bundle $\sT_{\sA_{g,\Gamma}}$ as a holomorphic subbundle of an Hodge bundle given by some PVHS, and the induced Hodge metric $H$ on the Siegel variety
$\sA_{g,\Gamma}=\Gamma\backslash\frak{H}_g$ is same as the
canonical Bergman metric.
We then finish the first statement by  using the estimate \ref{norm-estimate-2}.

\item By the statement (1), we have $\int_{\sA_{g,\Gamma}}|\omega_{\mathrm{can}} \wedge
\xi|<\infty$ for any smooth $(2(\dim\sA_{g,\Gamma})-2)$-form
$\xi$ on $\widetilde{\sA}_{g,\Gamma},$ and so the form $\omega_{\mathrm{can}}$ on $\sA_{g,\Gamma}$
defines a current $[\omega_{\mathrm{can}}]$ on $\widetilde{\sA}_{g,\Gamma}$
as follows :
$$<[\omega_{\mathrm{can}}], \phi>=\int_{\widetilde{\sA}_{g,\Gamma}}[\omega_{\mathrm{can}}] \wedge \phi :=\int_{\sA_{g,\Gamma}}\omega_{\mathrm{can}} \wedge \phi,$$  where $\phi$ is a smooth  $(2\dim(\sA_{g,\Gamma}) -2)$ form on $\widetilde{\sA}_{g,\Gamma}.$

We begin to show that $d[\omega_{\mathrm{can}}]=[d\omega_{\mathrm{can}}]=0.$ Let $\xi$ be any smooth $(2\dim(\sA_{g,\Gamma})-3)$-form on
$\widetilde{\sA}_{g,\Gamma}.$ Let $T_\delta$ be a tube
neighborhood of
$D_{\infty.n}=\widetilde{\sA}_{g,\Gamma}-\sA_{g,\Gamma}$ with
radius $\delta$ and $M_\delta:=\widetilde{\sA}_{g,\Gamma}\setminus
T_\delta.$ Then $\partial T_\delta=-\partial M_\delta.$
By definition,
$
  <d[\omega_{\mathrm{can}}], \xi> := -\int_{\sA_{g,\Gamma}}\omega_{\mathrm{can}}\wedge d\xi.
$
On the other hand, we have that
\begin{eqnarray*}
% \nonumber to remove numbering (before each equation)
  0=<[d\omega_{\mathrm{can}}],\xi>  &:=& \int_{\sA_{g,\Gamma}} d\omega_{\mathrm{can}}\wedge \xi \\
   &=& - \int_{\sA_{g,\Gamma}}  \omega_{\mathrm{can}}\wedge d\xi + \int_{\sA_{g,\Gamma}}  d(\omega_{\mathrm{can}}\wedge \xi)\\
  &=&- \int_{\sA_{g,\Gamma}}  \omega_{\mathrm{can}}\wedge d\xi+\lim_{\delta\to 0}\int_{M_\delta} d(\omega_{\mathrm{can}}\wedge \xi)\\
 \mbox{ (by Stoke's theorem)}&=&-\int_{\sA_{g,\Gamma}}  \omega_{\mathrm{can}}\wedge d\xi+\lim_{\delta\to 0} \int_{\partial M_\delta} \omega_{\mathrm{can}}\wedge \xi\\
&=&-\int_{\sA_{g,\Gamma}}  \omega_{\mathrm{can}}\wedge d\xi-\lim_{\delta\to
0}\int_{\partial T_\delta} \omega_{\mathrm{can}}\wedge
\xi\\
&=&-\int_{\sA_{g,\Gamma}}  \omega_{\mathrm{can}}\wedge d\xi.
\end{eqnarray*}
Here we use that $\omega_H$  has Poincar\'e growth on $D_\infty$ to obtain
$\lim\limits_{\delta\to 0}\int_{\partial T_\delta} \omega_{\mathrm{can}}\wedge \xi=0.$

\item Since $[\omega_{\mathrm{can}}]$ is a positive closed current, it is
a cohomology class on $\widetilde{\sA}_{g,\Gamma}$ of type
$(1,1).$ To prove that  $[\omega_{\mathrm{can}}]$ represents the first Chern class
$c_1(\Omega^1_{\widetilde{\sA}_{g,\Gamma}}(\log D_{\infty})),$  we
only need to show the following equality $$<[\omega_{\mathrm{can}}],
\eta>=<c_1(\Omega^1_{\widetilde{\sA}_{g,\Gamma}}(\log
D_{\infty})),\eta>$$ for any closed smooth
$(2\dim(\sA_{g,\Gamma})-2)$-form  $\eta$  on
$\widetilde{\sA}_{g,\Gamma}.$

Let $\eta$  be an arbitrary closed smooth
$(2\dim(\sA_{g,\Gamma})-2)$-form  on $\widetilde{\sA}_{g,\Gamma}.$
Let $\widetilde{H}$ be an arbitrary Hermitian metric on the bundle
$\Omega^1_{\widetilde{\sA}_{g,\Gamma}}(\log D_{\infty}).$ We have
\begin{eqnarray*}
% \nonumber to remove numbering (before each equation)
  <c_1(\Omega^1_{\widetilde{\sA}_{g,\Gamma}}(\log D_{\infty})), \eta> &:=&\frac{-1}{2\pi\sqrt{-1}}\int_{\widetilde{\sA}_{g,\Gamma}}  \Tr_{\widetilde{H}}(\Theta(\Omega^1_{\widetilde{\sA}_{g,\Gamma}}(\log D_{\infty}),\widetilde{H}))\wedge \eta\\
    &=& \frac{-1}{2\pi\sqrt{-1}}\int_{\widetilde{\sA}_{g,\Gamma}} \partial\overline{\partial}\log(\det
\widetilde{H})\wedge\eta \\
   &=& \frac{-1}{2\pi\sqrt{-1}}\int_{\sA_{g,\Gamma}} \partial\overline{\partial}\log(\det
\widetilde{H})\wedge\eta
\end{eqnarray*}
where $\Theta(\Omega^1_{\widetilde{\sA}_{g,\Gamma}}(\log
D_{\infty}),\widetilde{H})$ is the Chern form of
$(\Omega^1_{\widetilde{\sA}_{g,\Gamma}}(\log D),\widetilde{H}),$
and
\begin{eqnarray*}
% \nonumber to remove numbering (before each equation)
  <[\omega_{\mathrm{can}}], \eta> &:=&\int_{\sA_{g,\Gamma}}\omega_{\mathrm{can}}\wedge \eta  \\
   &=&  \frac{-1}{2\pi\sqrt{-1}}\int_{\sA_{g,\Gamma}}\Tr_{H_{\mathrm{can}}}\Theta(\Omega^1_{\sA_{g,\Gamma}},H_{\mathrm{can}})\wedge \eta \\
   &=& \frac{-1}{2\pi\sqrt{-1}}\int_{\sA_{g,\Gamma}} \overline{\partial}\partial\log(\det
H_{\mathrm{can}})\wedge\eta.
\end{eqnarray*}
Thus, it is sufficient to show that
$$\lim_{\delta\to 0} \int_{M_\delta} \overline{\partial}\partial\log(\frac{\det H_{\mathrm{can}}}{\det \widetilde{H}})\wedge\eta
=0.$$
We note that $\zeta:=\partial\log\det
H_{\mathrm{can}}-\partial\log\det\widetilde{H}$ is a global $(1,0)$-form on
$\sA_{g,\Gamma},$ we then get
\begin{eqnarray*}
% \nonumber to remove numbering (before each equation)
 \int_{M_\delta} \overline{\partial}\partial\log(\frac{\det H_{\mathrm{can}}}{\det \widetilde{H}})\wedge \eta
  &=& \int_{M_\delta} d\zeta\wedge \eta \\
 &=&-\int_{\partial T_\delta} \zeta\wedge \eta.
\end{eqnarray*}
As an application of the theorem\ref{hodge-metric-is-Bergman-metric},
we obtain that the $(1,0)$-form $\zeta$ near
the boundary divisor $D_{\infty}$ is nearly bounded in sense of Koll\'ar(cf.\cite{Kol87})
by Proposition 5.22 in \cite{CKS86}.
Thus, we have
$$\lim_{\delta\to 0}\int_{\partial T_\delta} \zeta\wedge \eta=0.$$

\item Since the metric connection form of any Hodge metric and its curvature form are both nearly bounded around the boundary divisor $D_\infty$(cf. Proposition 5.7 \cite{Kol87}), we
have :
$$C_1(\Omega^1_{\widetilde{\sA}_{g,\Gamma}}(\log D_{\infty}))^{\dim \sA_{g,\Gamma}}=(\frac{-1}{2\pi\sqrt{-1}})^{\dim \sA_{g,\Gamma}}\int_{\sA_{g,\Gamma}}\Tr_H(\Theta(\Omega^1_{\sA_{g,\Gamma}},H))^{\dim \sA_{g,\Gamma}} >0.$$
Since $\omega_{\widetilde{\sA}_{g,\Gamma}}(D_{\infty})$ is a
numerically effective line bundle on $\widetilde{\sA}_{g,\Gamma},$
we obtain that
$\omega_{\widetilde{\sA}_{g,\Gamma}}(D_{\infty})$ is a  big line
bundle by Siu's numerical criterion in \cite{Siu93}.
\end{myenumi}
\end{proof}
\begin{myrem}
We must point out that the statement (4) of Theorem \ref{degenerate-Bergamn} is first proven in \cite{Mum77} by using some calculations depending on a smooth toroidal compactification $\widetilde{\sA}_{g,\Gamma},$ and it can also be proven by  Siu-Yau's result on compactification (cf.Lemma 6 in \cite{SY82})
or by Zuo's result on positivity (cf.Theorem 0.1 in
\cite{Zuo00}). 
We can make an improvement on the statement (1) of the statement : By the generalized Schwarz lemma(cf.\cite{Yau78-2} and \cite{Roy80}), the metric $\omega_{can}$ is dominated above by the Poincar\'e metric  near infinity boundary of  not only smooth toroidal compactifications but also of  a general compactification with normal crossings boundary divisor. \\
\end{myrem}

\begin{lemma}[Moeller-Viehweg-Zuo cf.\cite{MVZ07}]\label{Mumford-extension}
Let $\Gamma$ be a neat arithmetic subgroup of $\Sp(g,\Z).$
Let $\overline{\sA}_{g,\Gamma}^{\mathrm{tor}}$ be a smooth toroidal compactification of the Siegel variety
$\sA_{g,\Gamma}:=\Gamma\backslash\frak{H}_g$ such that $D_{\infty}:= \overline{\sA}_{g,\Gamma}^{\mathrm{tor}}\setminus\sA_{g,\Gamma}$ is simple normal crossing.
Let $(\frak{L},\theta,h)$ be a homogenous Hodge bundle induced by PVHS on $\sA_{g,\Gamma}$ and $\sN$
any homogenous subbundle of $\frak{L}.$

Deligne's canonical
extension of the
bundle $\sN$ to  $\overline{\sA}_{g,\Gamma}^{\mathrm{tor}}$ coincides with the Mumford good extension(cf. \cite{Mum77}) of $\sN$ to  $\overline{\sA}_{g,\Gamma}^{\mathrm{tor}}$ by the Hodge
metric $h.$
\end{lemma}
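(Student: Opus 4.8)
The plan is to identify Deligne's canonical extension $\overline{\frak{L}}$ with the Mumford good extension of $(\frak{L},h)$ by showing that, over every boundary coordinate chart, both are described by the \emph{same} growth condition on holomorphic sections. Write $X:=\sA_{g,\Gamma}$, $\overline{X}:=\overline{\sA}_{g,\Gamma}^{\mathrm{tor}}$, $D:=D_\infty$, and let $j:X\hookrightarrow\overline{X}$ be the inclusion. Since $\Gamma$ is neat, every local monodromy of the PVHS inducing $\frak{L}$ around $D$ is unipotent (item (2) in the list following \eqref{period-mapping}), so Deligne's canonical extension $\overline{\frak{L}}$ to $\overline{X}$ is defined and the norm estimates \ref{norm-estimate-1}, \ref{norm-estimate-2} apply to $\frak{L}$ and to its dual $\frak{L}^\vee$.

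The key step is to check that the Hodge metric $h$ is a \emph{good} Hermitian metric on $\frak{L}$ in the sense of \cite{Mum77}. Fix a special coordinate neighborhood $(\triangle_1,z)$ with $D=\{z_1\cdots z_l=0\}$. First, applying the two-sided estimate \ref{norm-estimate-2} to $\frak{L}$ and to $\frak{L}^\vee$ relative to a Deligne frame gives, exactly as in \ref{norm-estimate-3}, that $|h_{ij}|$, $\det(h_{ij})$ and $(\det(h_{ij}))^{-1}$ are all bounded by $C(\sum_{\alpha=1}^{l}-\log|z_\alpha|)^{2M}$ on a scaling neighborhood $\triangle_\varepsilon$. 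Second, the Chern connection form and the curvature form of $(\frak{L},h)$ have Poincar\'e growth along $D$; this is Koll\'ar's Proposition 5.7 in \cite{Kol87} (a consequence of Proposition 5.22 in \cite{CKS86}), already invoked in the proof of Theorem \ref{degenerate-Bergamn}(4). Hence $(\frak{L},h)$ admits Mumford's canonical (good) extension $\overline{\frak{L}}^{\mathrm{good}}$ to $\overline{X}$, which is locally free and independent of all choices.

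Next I would compare the two extensions locally. By Proposition 1.3 of \cite{Mum77}, a holomorphic section $s$ of $\frak{L}$ over $\triangle_\varepsilon\cap X$ extends to a section of $\overline{\frak{L}}^{\mathrm{good}}$ over $\triangle_\varepsilon$ if and only if $\|s\|_h\leq C(\sum_{\alpha=1}^{l}-\log|z_\alpha|)^{M}$ for some constants $C,M$. On the other hand, the argument proving Proposition \ref{Deligne-extension} --- which uses only the norm estimates \ref{norm-estimate-2} for $\frak{L}$ together with the same Proposition 1.3 of \cite{Mum77} --- yields precisely the same description of $\Gamma(\triangle_\varepsilon,\overline{\frak{L}})$. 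Therefore $\overline{\frak{L}}$ and $\overline{\frak{L}}^{\mathrm{good}}$ coincide as subsheaves of $j_*\frak{L}$ over every such $\triangle_\varepsilon$. Since these neighborhoods cover a neighborhood of $D$ in $\overline{X}$ and both extensions restrict to $\frak{L}$ on $X$, they agree globally, which is the assertion of the lemma.

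The main obstacle is the second step: establishing the full goodness of the Hodge metric on the (possibly composite) bundle $\frak{L}$ --- not merely the logarithmic growth of the metric coefficients, but also the Poincar\'e growth of the connection form --- which is exactly where the genuine analytic input ($\mathrm{SL}_2$-orbit and nilpotent orbit estimates of Schmid and of \cite{CKS86}) enters; everything else is sheaf-theoretic bookkeeping. I would handle this by observing that $\frak{L}$ is again the Hodge bundle of a PVHS with unipotent local monodromy, so it falls under the hypotheses of \cite{CKS86} and \cite{Kol87}, and then citing those results rather than reproving the estimates.
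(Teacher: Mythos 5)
Your argument is correct and follows exactly the route the paper intends: it gives no proof of Lemma \ref{Mumford-extension}, merely noting it is ``an application of the estimates \ref{norm-estimate-1}, \ref{norm-estimate-2} and the proposition \ref{Deligne-extension},'' and your proof fills this in precisely --- verifying goodness of the Hodge metric via the two-sided norm estimates plus the Poincar\'e growth of the connection form (Koll\'ar/CKS), and then matching the two extensions through the common growth characterization of Mumford's Proposition 1.3. Nothing further is needed.
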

\begin{proof}
It is a direct consequence of the estimates \ref{norm-estimate-1}, \ref{norm-estimate-2} and the proposition \ref{Deligne-extension}.
\end{proof}

\begin{lemma}\label{tangent bundle-Siegel varietie}
Let $\Gamma$ be a neat arithmetic subgroup of $\Sp(g,\Z).$ Let $\overline{\sA}_{g,\Gamma}^{\mathrm{tor}}$ be a smooth toroidal compactification of the Siegel variety
$\sA_{g,\Gamma}:=\Gamma\backslash\frak{H}_g$ such that $D_{\infty}:= \overline{\sA}_{g,\Gamma}^{\mathrm{tor}}\setminus\sA_{g,\Gamma}$ is simple normal crossing.

We have the following identifications
\begin{equation*}
   \sT_{\overline{\sA}_{g,\Gamma}^{\mathrm{tor}}}(-\log D_{\infty})=\Sym^2(\overline{E^{0,1}}),
\end{equation*}
and
$$\omega_{\overline{\sA}_{g,\Gamma}^{\mathrm{tor}}}(D_{\infty})=\bigwedge^{\dim_\C\sA_{g,\Gamma}}\Omega^1_{\overline{\sA}_{g,\Gamma}^{\mathrm{tor}}}(\log
D_{\infty})= (\det\overline{E^{1,0}})^{g+1}.$$

Moreover, the line bundle $\omega_{\overline{\sA}_{g,\Gamma}^{\mathrm{tor}}}(D_{\infty})$ is semi-positive
on the compactification $\overline{\sA}_{g,\Gamma}^{\mathrm{tor}}.$
%Therefore, both $\sT_{\sA_{g,\Gamma}}$ and $\Omega^1_{\sA_{g,\Gamma}}$  are $\Sp(g,\R)$-homogenous vector bundles on $\sA_{g,\Gamma}.$
\end{lemma}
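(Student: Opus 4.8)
The plan is to extend the inclusion $\rho_0\colon\sT_{\sA_{g,\Gamma}}\hookrightarrow (E^{0,1})^{\otimes 2}$ of Lemma \ref{subbundle-tangent} across the boundary, show that its image lands in the symmetric part $\Sym^2(E^{0,1})$, and then compare Deligne extensions. Concretely, I would first observe that the section $\overline{\theta^{1,0}}$ of $\overline{E^{0,1}}^{\otimes 2}\otimes\Omega^1_{\overline{\sA}^{\mathrm{tor}}}(\log D_\infty)$ already gives, via \eqref{kodaira-Spencer-class}, a logarithmic sheaf map $\rho\colon\sT_{\overline{\sA}^{\mathrm{tor}}}(-\log D_\infty)\to\overline{E^{0,1}}^{\otimes 2}$ which restricts to $\rho_0$ on $\sA_{g,\Gamma}$. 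The key point is that $\rho$ is an isomorphism onto $\Sym^2(\overline{E^{0,1}})$. Over the open part this is classical for Siegel varieties (the Kodaira--Spencer map realizes the holomorphic tangent space at $F^1_\tau$ as $\Sym^2$ of $(V_\C/F^1_\tau)^\vee \cong \overline{F^1_\tau}$, because the symmetry of the period matrix $\tau$ forces the Kodaira--Spencer tensor to be symmetric); I would spell this out at the base point $o$ using Borel's embedding exactly as in the proof of Lemma \ref{subbundle-tangent}, and propagate by $\Sp(g,\R)$-equivariance. So on $\sA_{g,\Gamma}$ we get $\sT_{\sA_{g,\Gamma}}\cong\Sym^2(E^{0,1})$.

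Next I would upgrade this to an isomorphism of the Deligne extensions over $\overline{\sA}^{\mathrm{tor}}$. For this I invoke Lemma \ref{Mumford-extension} (Moeller--Viehweg--Zuo): the Deligne extension of the homogeneous Hodge bundle $E^{0,1}$ by the Hodge metric $h$ agrees with Mumford's good (canonical) extension, and the same is true for $\Sym^2(E^{0,1})$ since symmetric powers and tensor operations commute with Deligne's canonical extension for unipotent monodromy (the functoriality of the extension under the associated representation, which is what Mumford's construction uses). On the other side, $\sT_{\overline{\sA}^{\mathrm{tor}}}(-\log D_\infty)$ is by its very construction the dual of the log-cotangent bundle, and the toroidal boundary is built so that the metric induced by the Hodge/Bergman metric has exactly logarithmic (Poincaré) degeneration along $D_\infty$ — this is precisely the content of Theorem \ref{degenerate-Bergamn}(1) combined with the norm estimate \eqref{norm-estimate-1} and Proposition \ref{Deligne-extension}. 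Hence both $\sT_{\overline{\sA}^{\mathrm{tor}}}(-\log D_\infty)$ and $\Sym^2(\overline{E^{0,1}})$ are the canonical (Deligne = Mumford good) extensions across $D_\infty$ of the same bundle $\sT_{\sA_{g,\Gamma}}=\Sym^2(E^{0,1})$ on the open part; by the uniqueness of such an extension (it is $\overline{\sV}\cap j_*\sN$, independent of choices, as recalled before Proposition \ref{Deligne-extension}), the isomorphism $\rho$ over $\sA_{g,\Gamma}$ extends uniquely to the claimed identification over $\overline{\sA}^{\mathrm{tor}}$, and being an isomorphism is checked on the dense open set.

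For the second identity I simply take top exterior powers. On the open part $\sA_{g,\Gamma}$, from $\Omega^1_{\sA_{g,\Gamma}}\cong\Sym^2(E^{1,0})$ (dualize the first identity, using $(E^{0,1})^\vee\cong E^{1,0}$) one computes $\det\Sym^2(E^{1,0}) = (\det E^{1,0})^{\otimes(g+1)}$ by the standard formula $\det\Sym^2 W=(\det W)^{\otimes(\dim W+1)}$ with $\dim W=g$, and $\dim_\C\sA_{g,\Gamma}=g(g+1)/2=\dim\Sym^2(E^{1,0})$ matches. Then I take Deligne extensions again: $\omega_{\overline{\sA}^{\mathrm{tor}}}(D_\infty)=\bigwedge^{g(g+1)/2}\Omega^1_{\overline{\sA}^{\mathrm{tor}}}(\log D_\infty)$ is the Deligne extension of $\omega_{\sA_{g,\Gamma}}$, which by the first identity and Lemma \ref{Mumford-extension} equals the Mumford good extension of $(\det E^{1,0})^{g+1}$, i.e.\ $(\det\overline{E^{1,0}})^{g+1}$ where $\overline{E^{1,0}}$ is Deligne's extension — using once more that $\det$ and tensor powers commute with the canonical extension.

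The main obstacle I anticipate is the boundary comparison, not the open-part computation: one must be careful that "Deligne extension commutes with $\Sym^2$ and $\det$" and that the log tangent bundle of the \emph{toroidal} compactification is genuinely the Deligne/Mumford-good extension of $\sT_{\sA_{g,\Gamma}}$ — the latter is exactly why the toroidal (as opposed to an arbitrary smooth) compactification is needed, and it rests on Lemma \ref{Mumford-extension} together with the Poincaré-growth statement of Theorem \ref{degenerate-Bergamn}. Once those two functoriality/identification facts are in place, the rest is the linear-algebra identity for $\det\Sym^2$ and dimension bookkeeping.
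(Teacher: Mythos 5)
Your overall strategy coincides with the paper's: identify $\sT_{\sA_{g,\Gamma}}$ with $\Sym^2(E^{0,1})$ on the open part and then match canonical extensions across $D_\infty$ via Lemma \ref{Mumford-extension}. The open-part step is fine (the paper extracts the symmetry from $\theta\wedge\theta=0$ rather than from the symmetry of the period matrix, but both arguments work, and the rank count is the same), and your determinant computation $\det\Sym^2 W=(\det W)^{\otimes(\dim W+1)}$ with $\dim W=g$ correctly yields the second identity once the first is established.

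The gap is in the one step where the toroidal hypothesis must actually be used: your justification that $\sT_{\overline{\sA}_{g,\Gamma}^{\mathrm{tor}}}(-\log D_{\infty})$ is the Deligne (equivalently, Mumford good) extension of $\sT_{\sA_{g,\Gamma}}$. You derive this from Theorem \ref{degenerate-Bergamn}(1) together with \eqref{norm-estimate-1} and Proposition \ref{Deligne-extension}. But Theorem \ref{degenerate-Bergamn}(1) is stated, and true, for an \emph{arbitrary} smooth compactification with simple normal crossing boundary, and it provides only an \emph{upper} bound $|\omega_{\mathrm{can}}|\leq C(\prod_{i}-\log|z_i|)^{M}$. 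Feeding that into Proposition \ref{Deligne-extension} shows only that the logarithmic frame $z_1\partial_{z_1},\dots,z_l\partial_{z_l},\partial_{z_{l+1}},\dots,\partial_{z_n}$ consists of log-bounded sections of $\Sym^2(E^{0,1})$, i.e.\ it yields an injection $\sT_{\overline{\sA}_{g,\Gamma}^{\mathrm{tor}}}(-\log D_{\infty})\hookrightarrow\Sym^2(\overline{E^{0,1}})$ of locally free sheaves of equal rank, which could a priori have a torsion cokernel supported on $D_{\infty}$. To promote this to an isomorphism you need the \emph{lower} bound as well: the dual log frame must also have log-bounded Hodge norm, i.e.\ the invariant metric must be good in Mumford's sense in the toroidal coordinates. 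This is precisely what fails for a non-toroidal compactification (blow up a boundary point: the log tangent bundle changes while the Deligne extension essentially does not, so no argument valid for every smooth compactification can prove the identity). The missing ingredient is the explicit two-sided estimate of Proposition 3.4 of \cite{Mum77}, which is exactly what the paper cites at this point; with it, both sides are Mumford good extensions of the same bundle on $\sA_{g,\Gamma}$ and uniqueness of the good extension finishes the argument as you intended.
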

\begin{proof}
We know that there is an inclusion $\sT_{\sA_{g,\Gamma}}\>\subset>>(E^{0,1})^{\otimes 2}.$
Since the Higgs field has the property $\theta\wedge \theta =0,$
the holomorphic subbundle $\Sym^2(E^{0,1})$ of  $(E^{0,1})^{\otimes 2}$ must contain the bundle $\sT_{\sA_{g,\Gamma}}.$
According to
$\rank_\C\sT_{\sA_{g,\Gamma}}=\rank_\C \Sym^2(E^{0,1})=g(g+1)/2,$
we obtain
$\sT_{\sA_{g,\Gamma}}=\Sym^2(E^{0,1}).$

The holomorphic vector bundle $\Sym^2(\overline{E^{0,1}})$ on $\overline{\sA}_{g,\Gamma}^{\mathrm{tor}}$ is Deligne's extension of
$\Sym^2(E^{0,1}).$ Using the proposition \ref{Mumford-extension} in the next subsection,  $\Sym^2(\overline{E^{0,1}})$ is also the unique Mumford's good extension of $\Sym^2(E^{0,1})$ by the Hodge metric $H.$ Shown in
Proposition 3.4 \cite{Mum77}ㄛ $\sT_{\overline{\sA}_{g,\Gamma}^{\mathrm{tor}}}(-\log
D_{\infty})$ is the unique Mumford's good extension of
$\sT_{\sA_{g,\Gamma}}$ by the metric $H.$ Therefore,
$$\sT_{\overline{\sA}_{g,\Gamma}^{\mathrm{tor}}}(-\log
D_{\infty})\cong\Sym^2(\overline{E^{0,1}}).$$
%We then get the rest immediately  by applying this identification.
Thus, we obtain that $\omega_{\overline{\sA}_{g,\Gamma}^{\mathrm{tor}}}(D_{\infty})= (\det\overline{E^{1,0}})^{g+1},$ so that $\omega_{\overline{\sA}_{g,\Gamma}^{\mathrm{tor}}}(D_{\infty})$ is semi-positive by Kawamata's positivity package in \cite{Kawa81}.
\end{proof}
\begin{myrem}
We can also get the semi-positivity of $\omega_{\overline{\sA}_{g,\Gamma}^{\mathrm{tor}}}(D_{\infty})$  by an argument of Mumford : it is shown in \cite{Mum77} that the sheaf $\omega_{\overline{\sA}_{g,\Gamma}^{\mathrm{tor}}}(D_{\infty})$ is the pull back of an ample line on the Satake-Baily-Borel compactification $\sA_{g,\Gamma}^*:=\Gamma\backslash\frak{H}_g^*.$
\end{myrem}

\begin{theorem}\label{stable-bundle}
Let $\Gamma\subset\Sp(g,\Z)$ be a neat arithmetic subgroup.
Let $\overline{\sA}_{g,\Gamma}^{\mathrm{tor}}$ be a smooth toroidal compactification of the Siegel variety
$\sA_{g,\Gamma}:=\Gamma\backslash\frak{H}_g$ such that $D_{\infty}:= \overline{\sA}_{g,\Gamma}^{\mathrm{tor}}\setminus\sA_{g,\Gamma}$ is a simple normal crossing divisor.

The logarithmic tangent bundle
$\sT_{\overline{\sA}_{g,\Gamma}^{\mathrm{tor}}}(-\log D_{\infty})$ is a stable
vector bundle with respect to the polarization $K_{\overline{\sA}_{g,\Gamma}^{\mathrm{tor}}}+D_\infty.$
%the canonical Bergman  current $[\omega_{\mathrm{can}}].$
\end{theorem}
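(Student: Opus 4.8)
\medskip
\noindent\textbf{Plan of proof.} The plan is to deduce stability from the K\"ahler--Einstein property of the Bergman metric via Kobayashi's argument that a Hermitian--Einstein bundle is polystable, and then to promote polystability to stability using the irreducibility of the Siegel domain. Write $n=\dim_\C\sA_{g,\Gamma}=g(g+1)/2$ and $\overline{\sT}:=\sT_{\overline{\sA}_{g,\Gamma}^{\mathrm{tor}}}(-\log D_{\infty})$. First I would record, from Theorem \ref{degenerate-Bergamn}, that $[\omega_{\mathrm{can}}]=c_1\big(\omega_{\overline{\sA}_{g,\Gamma}^{\mathrm{tor}}}(D_{\infty})\big)$ is a nef and big $(1,1)$-class, so that $[\omega_{\mathrm{can}}]^{n-1}$ is a well-defined cohomology class and $\deg_{[\omega_{\mathrm{can}}]}(\sF):=\int_{\overline{\sA}_{g,\Gamma}^{\mathrm{tor}}}c_1(\sF)\wedge[\omega_{\mathrm{can}}]^{n-1}$ — equivalently, Mumford's degree of $\sF$ relative to a good metric — makes sense for every torsion-free coherent sheaf $\sF$; the aim is then to prove $\mu_{[\omega_{\mathrm{can}}]}(\sF)<\mu_{[\omega_{\mathrm{can}}]}(\overline{\sT})$ for every saturated coherent subsheaf $\sF\subset\overline{\sT}$ with $0<\rk\sF<n$. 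By Lemma \ref{tangent bundle-Siegel varietie} the bundle $\overline{\sT}=\Sym^2(\overline{E^{0,1}})$ is the Mumford good extension of $\sT_{\sA_{g,\Gamma}}$ by the Hodge metric $H$, which by Theorem \ref{hodge-metric-is-Bergman-metric} equals the Bergman metric and is K\"ahler--Einstein; hence $(\sT_{\sA_{g,\Gamma}},H)$ is Hermitian--Einstein on $\sA_{g,\Gamma}$, with $\sqrt{-1}\,\Lambda_{\omega_{\mathrm{can}}}\Theta(\sT_{\sA_{g,\Gamma}},H)=-c\cdot\mathrm{Id}$ for a positive constant $c$ (normalisation $\lambda=1$).

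The next step is the Kobayashi semistability estimate. Given a saturated subsheaf $\sF\subset\overline{\sT}$, its restriction $\sF_0$ to $\sA_{g,\Gamma}$ is a holomorphic subbundle of $\sT_{\sA_{g,\Gamma}}$ outside an analytic subset $Z$ of codimension $\ge 2$; on the complement I endow $\sF_0$ with the induced metric and let $\beta$ be its second fundamental form. Gauss--Codazzi together with the Hermitian--Einstein equation give, after taking the fibrewise trace, $\tr\big(\sqrt{-1}\,\Lambda_{\omega_{\mathrm{can}}}\Theta(\sF_0,H)\big)=-c\,\rk\sF-|\beta|^{2}$, and integrating this identity over $\sA_{g,\Gamma}$ against $\omega_{\mathrm{can}}^{n-1}$ would give $\deg_{[\omega_{\mathrm{can}}]}(\sF)=\tfrac{\rk\sF}{n}\,\deg_{[\omega_{\mathrm{can}}]}(\overline{\sT})-\mathrm{const}\cdot\int_{\sA_{g,\Gamma}}|\beta|^{2}\,\omega_{\mathrm{can}}^{n}\le\tfrac{\rk\sF}{n}\,\deg_{[\omega_{\mathrm{can}}]}(\overline{\sT})$, hence $\mu_{[\omega_{\mathrm{can}}]}(\sF)\le\mu_{[\omega_{\mathrm{can}}]}(\overline{\sT})$, with equality only if $\beta\equiv0$. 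The delicate point — and what I expect to be the main obstacle — is that this identity is integrated over the \emph{open} manifold against powers of the \emph{degenerating} metric $\omega_{\mathrm{can}}$: one must show that the Chern--Weil integral $\int_{\sA_{g,\Gamma}}c_1(\det\sF_0,\det H)\wedge\omega_{\mathrm{can}}^{n-1}$ converges and equals $\deg_{[\omega_{\mathrm{can}}]}(\sF)$, with no contribution from $D_{\infty}$. I would do this exactly as in the proof of Theorem \ref{degenerate-Bergamn}(3): the good-metric property of $H$ on $\overline{\sT}$ makes its connection form and curvature nearly bounded along $D_{\infty}$ (Proposition~5.7 of \cite{Kol87}, resting on Proposition~5.22 of \cite{CKS86}), this control is inherited by the metric induced on $\det\sF_0$, and then the norm estimates \ref{norm-estimate-1}, \ref{norm-estimate-2} together with Proposition \ref{Deligne-extension} force the boundary terms in the Stokes argument to vanish.

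Finally I would rule out the equality case. If $\mu_{[\omega_{\mathrm{can}}]}(\sF)=\mu_{[\omega_{\mathrm{can}}]}(\overline{\sT})$ for some proper nonzero saturated $\sF$, then $\beta\equiv0$ on $\sA_{g,\Gamma}\setminus Z$, so the metric-orthogonal projection onto $\sF_0$ is holomorphic and parallel for the Chern connection there; since $\codim Z\ge 2$ this idempotent extends across $\sA_{g,\Gamma}$ and yields a parallel holomorphic splitting $\sT_{\sA_{g,\Gamma}}=\sF_0\oplus\sF_0^{\perp}$ into nonzero pieces. By Theorem \ref{hodge-metric-is-Bergman-metric} the Chern connection of $(\sT_{\sA_{g,\Gamma}},H)$ is the Levi--Civita connection of the Bergman metric, so such a splitting contradicts the irreducibility of its holonomy: the universal cover of $\sA_{g,\Gamma}$ is the Siegel space $\frak{H}_g\cong\Sp(g,\R)/U(g)$, an irreducible Hermitian symmetric space of noncompact type whose isotropy representation of $U(g)$ on $T_o\frak{H}_g\cong\Sym^2(\C^g)$ is irreducible. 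Hence no destabilising subsheaf exists and $\overline{\sT}$ is stable with respect to $[\omega_{\mathrm{can}}]$. (One may equally carry the last two steps out on $\Sym^2(\overline{E^{0,1}})$ directly: $\overline{E^{0,1}}$ is itself Hermitian--Einstein for the Hodge metric — its mean curvature is an $\Sp(g,\R)$-invariant, hence at $o$ a $U(g)$-equivariant, endomorphism of the irreducible module $\C^g$, so a scalar — whence $\Sym^2(\overline{E^{0,1}})$ is Hermitian--Einstein and polystable, and irreducibility of the $U(g)$-module $\Sym^2(\C^g)$ promotes this to stability.)
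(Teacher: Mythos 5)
Your proposal follows essentially the same route as the paper: identify $\sT_{\overline{\sA}_{g,\Gamma}^{\mathrm{tor}}}(-\log D_{\infty})=\Sym^2(\overline{E^{0,1}})$ as a good/canonical extension, compute degrees of subobjects by the Chern--Weil integral of the Hodge$\,=\,$Bergman metric over the open part with boundary terms killed by the near-boundedness estimates of Koll\'ar and Cattani--Kaplan--Schmid, deduce polystability from the K\"ahler--Einstein (hence Hermitian--Einstein) property, and upgrade to stability by irreducibility of $\frak{H}_g$. You merely make explicit two steps the paper delegates to citations --- the Gauss--Codazzi/second-fundamental-form inequality behind ``K\"ahler--Einstein implies polystable,'' and the holonomy/isotropy-representation argument (irreducibility of the $U(g)$-module $\Sym^2(\C^g)$) behind the appeal to Yau's uniformization papers --- and you correctly work with saturated subsheaves rather than only subbundles, which is the form of the argument actually needed for stability.
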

\begin{proof}
Let $\mathcal{E}$ be  Deligne's canonical extension of the Hodge bundle
$(E^{\otimes 2},H),$ and $\overline{E^{0,1}}$ be Deligne's canonical extension of $E^{0,1}.$
By the lemma \ref{subbundle-tangent} and the lemma \ref{tangent bundle-Siegel varietie},  the logarithmic tangent
bundle $\sT_{\overline{\sA}_{g,\Gamma}^{\mathrm{tor}}}(-\log D_{\infty})=\Sym^2(\overline{E^{0,1}}),$ and so $\sT_{\overline{\sA}_{g,\Gamma}^{\mathrm{tor}}}(-\log D_{\infty})$ is a holomorphic subbundle  of $\mathcal{E}:=\overline{E}^{\otimes 2}.$

Let $\mathcal{G}$ be an arbitrary subbundle of $\mathcal{E}$ and
 $\widetilde{H}$  an arbitrary Hermitian metric on $\mathcal{G}.$
Let $\mathcal{G}_0:=\mathcal{G}|_{\sA_{g,\Gamma}}.$ We know
$\mathcal{G}$ is just Deligne canonical extension of
$\mathcal{G}_0.$ The degree of $\mathcal{G}$ with respect to the
the polarization $K_{\overline{\sA}_{g,\Gamma}^{\mathrm{tor}}}+D_\infty is $
is
\begin{eqnarray*}
% \nonumber to remove numbering (before each equation)
  \deg \mathcal{G} &:=& <c_1(\mathcal{G}), \bigwedge^{^{\dim \sA_{g,\Gamma}-1}}c_1(K_{\overline{\sA}_{g,\Gamma}^{\mathrm{tor}}}+D_\infty)> \\
   &=& <c_1(\mathcal{G}), [\omega^{\dim \sA_{g,\Gamma}-1}]>
\end{eqnarray*}
by (3) of the theorem \ref{degenerate-Bergamn} and Koll\'ar's argument of 5.18 in \cite{Kol87}.
Let $\eta:=\omega^{\dim \sA_{g,\Gamma}-1}.$  Similar calculation
as (3) of the theorem \ref{degenerate-Bergamn}, we have that
\begin{eqnarray*}
% \nonumber to remove numbering (before each equation)
 \deg\mathcal{G}&=&\int_{\overline{\sA}_{g,\Gamma}^{\mathrm{tor}}}  \Tr_{\widetilde{H}}(\Theta(\mathcal{G},\widetilde{H}))\wedge [\eta]\\
    &=& \frac{-1}{2\pi\sqrt{-1}}\int_{\overline{\sA}_{g,\Gamma}^{\mathrm{tor}}} \partial\overline{\partial}\log(\det
\widetilde{H})\wedge[\eta] \\
   &=& \frac{-1}{2\pi\sqrt{-1}}\int_{\sA_{g,\Gamma}} \partial\overline{\partial}\log(\det
\widetilde{H})\wedge\eta \\
   &=&\frac{-1}{2\pi\sqrt{-1}}\int_{\sA_{g,\Gamma}} \partial\overline{\partial}\log(\det
H)\wedge\eta + \frac{1}{2\pi\sqrt{-1}}\int_{\sA_{g,\Gamma}}
 \overline{\partial}\partial\log(\frac{\det H}{\det \widetilde{H}})\wedge\eta\\
&=&\frac{-1}{2\pi\sqrt{-1}}\int_{\sA_{g,\Gamma}}
\partial\overline{\partial}\log(\det H)\wedge\eta\\
&=&\int_{\sA_{g,\Gamma}}
\Tr_H(\Theta(\mathcal{G}_0,H))\wedge \omega_{\mathrm{can}}^{\dim \sA_{g,\Gamma}-1}\\
\end{eqnarray*}

Since the canonical Bergman metric is K\"ahler-Einstein, this essential
property implies that the logarithmic tangent bundle
$\sT_{\overline{\sA}_{g,\Gamma}^{\mathrm{tor}}}(-\log D_{\infty})$ is a
poly-stable vector bundle with respect to $[\omega].$

On the other hand, $\sA_{g,\Gamma}$ is simple, then we obtain that
the logarithmic tangent bundle $\sT_{\overline{\sA}_{g,\Gamma}^{\mathrm{tor}}}(-\log D_{\infty})$ can not be
decomposed into a direct sum by the argument
in the third paragraph of Page 272 in \cite{Yau87} and the argument of
Page 478-478 in \cite{Yau93}.\\
\end{proof}

%Section 2

%\newpage
\section{Some applications on  Siegel varieties}

\vspace{1cm}

All definitions and notations related to toroidal compactifications of Siegel varieties can be found in \cite{AMRT}, \cite{Chai} ,\cite{FC} and \cite{Y-Z}. We do not recite these definitions and notations in this section again, and use them freely.

Let $\frak{F}_0$ be the standard minimal cusp of the Siegel space $\frak{H}_g.$
Let $\Sigma_{\frak{F}_0}:=\{\sigma_\alpha^{\frak{F}_0}\}$ be a
suitable $\mathrm{GL}(g,\Z)$-admissible polyhedral decomposition of $C(\frak{F}_0)$ regular with respect to $\Sp(g,\Z)$
such that the induced symmetric $\Sp(g,\Z)$-admissible family
$\{\Sigma_\frak{F}\}_{\frak{F}}$ of polyhedral decompositions is
projective.

For any positive integer $l,$ let $\overline{\sA}_{g,l}$ to be
the symmetric toroidal compactification of the Siegel variety
$\sA_{g,l}:=\Gamma_g(l)\backslash\frak{H}_g$ constructed by
$\{\Sigma_\frak{F}\}_{\frak{F}},$ and let
$$D_{\infty,l}:=\overline{\sA}_{g,l}-\sA_{g,l}$$
the boundary divisor. For convenience, we write $\sA_{g}$ for $\sA_{g,1}.$\\
%and write $\overline{\lambda}_{n}$ for $\overline{\lambda}_{n,1}.$

%\subsection{Structures of morphisms $\overline{\lambda}_{n,m}:\overline{\sA}_{g,n}\to \overline{\sA}_{g,m}$ for all $m|n$ }
For  any positive integer $l,$ we sketch a key-step in the construction of the symmetric
compactification $\overline{\sA}_{g,l}$  as follows :

Let $\frak{F}$ be an arbitrary cusp of depth $k.$
$L_{\frak{F}}(l):=\Gamma(l)\cap U^{\frak{F}}(\Q)$ is a full
lattice in the vector space $U^{\frak{F}}(\C),$ and its dual is
$M_\frak{F}(l):=\Hom_\Z(L_{\frak{F}}(l),\Z).$ Explicitly, let
$\{\zeta_\alpha\}_{1}^{k(k+1)/2}$ be a lattice basis of
$L_{\frak{F}}:=\Sp(g,\Z)\cap U^\frak{F}(\Q)$ and
$\{\delta_\alpha\}_{1}^{k(k+1)/2}$ the associated dual basis of
$M_\frak{F}:= \Hom_\Z(L_{\frak{F}},\Z);$ then
$\{\zeta^l_\alpha:=l\zeta_\alpha \}_{1}^{k(k+1)/2}$ is a lattice
basis of $L_{\frak{F}}(l),$ and $\{\delta^l_\alpha:=
\frac{\delta_\alpha}{l}\}_{1}^{k(k+1)/2}$ is the dual basis of
$M_\frak{F}(l).$ For any cone $\sigma\in \Sigma_{\frak{F}},$ we
get a toroidal variety $X_{\sigma}(l):=\Spec\C[\sigma^\vee\cap
M_{\frak{F}}(l)];$  we then have
$$\mbox{$\widetilde{\Delta}_{\frak{F},\sigma}(l)$:= the interior of the
closure of $\frac{\frak{H}_g}{\Gamma(l)\cap U^{\frak{F}}(\Q)}$ in
$ X_{\sigma}(l)\times_{T_{\frak{F}}(l)}
\frac{D(\frak{F})}{\Gamma(l)\cap U^{\frak{F}}(\Q)}$ } $$ where
$T_{\frak{F}}(l):= \Spec\C[M_{\frak{F}}(l)]$ is a torus; gluing all
$\widetilde{\Delta}_{\frak{F},\sigma}(l)$ as $\sigma$ runs through
$\Sigma_{\frak{F}},$ we obtain an analytic variety
$Z_{\frak{F}}^{'}(l)$ and an open  morphism $\pi_{\frak{F}}^{'}(l):Z_{\frak{F}}^{'}(l)\to \overline{\sA}_{g,l}.$ %and obtain
As in Section 2 of \cite{Y-Z}, we define
$$Z_{\frak{F}}(l):=\frac{Z_{\frak{F}}^{'}(l)}{\Gamma(l)\cap
\sN(\frak{F})/\Gamma(l)\cap U^{\frak{F}}(\R)}.$$

%Since the
%decomposition $\Sigma_{\frak{F}_0}$ has non self-intersection, we
%can regard
%$\widetilde{\Delta}_{\frak{F}_0,\sigma^{\frak{F}_0}}(l)$ as an
%open neighborhood of $U_{[\frak{F}_0]}$ for any cone
%$\sigma^{\frak{F}_0}\in \Sigma_{\frak{F}_0}.$

Let $n,m$ be two positive integers with $m|n.$
%Suppose $\Sigma_{\frak{F}_0}$ has non $\Gamma(m)$-self-intersections.
%Then, by the lemma \ref{lemma for non-selfintersection-2}, $\Sigma_{\frak{F}_0}$ also has non $\Gamma_g(n)$-self-intersections.
We are going to
construct  a  natural morphism
$\overline{\lambda}_{n,m}:\overline{\sA}_{g,n}\to
\overline{\sA}_{g,m}.$ Given a cusp $\frak{F}$ and a cone
$\sigma\in \Sigma_{\frak{F}},$ the inclusion of the algebras
$\C[\sigma^{\vee}\cap
M_{\frak{F}}(m)]\>\subset>>\C[\sigma^{\vee}\cap M_{\frak{F}}(n)]$
induces a finite surjective morphism $\lambda^{\sigma}:
X_{\sigma}(n)\>>>X_{\sigma}(m).$ Therefore, we have an analytic
surjective morphism
$$\lambda_{\frak{F}}^{\sigma}: \widetilde{\Delta}_{\frak{F},\sigma}(n)\>>>\widetilde{\Delta}_{\frak{F},\sigma}(m),$$
such that any $\tau\prec\sigma$ there holds a commutative diagram
$$
\begin{CDS}
\widetilde{\Delta}_{\frak{F},\tau}(n)  \> \subset >\mbox{open
embedding}>
\widetilde{\Delta}_{\frak{F},\sigma}(n)\\
\V \lambda_{\frak{F}}^{\tau} V V \novarr  \V V \lambda_{\frak{F}}^{\sigma} V  \\
\widetilde{\Delta}_{\frak{F},\tau}(m)  \> \subset >\mbox{open
embedding}> \widetilde{\Delta}_{\frak{F},\sigma}(m)
\end{CDS},
$$
and so we obtain a morphism $\lambda_{\frak{F}}^{'}:
Z_{\frak{F}}^{'}(n)\to Z_{\frak{F}}^{'}(m)$ by gluing all
$\lambda_{\frak{F}}^{\sigma}\, \forall \sigma\in
\Sigma_{\frak{F}}.$ Since $\Gamma_g(n)$ is a normal subgroup of
$\Gamma_g(m),$  the morphism $\lambda_{\frak{F}}^{'}$ reduces to
the morphism
$$\lambda_{\frak{F}}: Z_{\frak{F}}(n)\to Z_{\frak{F}}(m).$$
It can be verified straightforwardly that $\lambda_{\frak{F}}$'s
are compatible with the morphisms $\Pi_{\frak{F}_1,\frak{F}_2}$'s
and the action of $\Gamma.$ Therefore, we have a global morphism
$$\overline{\lambda}_{n,m}:\overline{\sA}_{g,n}\to
\overline{\sA}_{g,m}.$$

Let $\sigma$ be an arbitrary topo-dimensional cone in
$\Sigma_{\frak{F}_0}.$ Consider the inclusion
$$0\>>>\C[\sigma^{\vee}\cap M_{\frak{F}_0}(m)]\>\subset>> \Q(\C[\sigma^{\vee}\cap M_{\frak{F}_0}(n)])$$
 where $\Q(\C[\sigma^{\vee}\cap M_{\frak{F}_0}(n)])$ is the quotient
field of the integral domain $\C[\sigma^{\vee}\cap
M_{\frak{F}_0}(n)].$ The algebra $\C[\sigma^{\vee}\cap
M_{\frak{F}_0}(n)]$ is indeed the integral closure of
$\C[\sigma^{\vee}\cap M_{\frak{F}_0}(m)]$ in
$\Q(\C[\sigma^{\vee}\cap M_{\frak{F}_0}(n)]).$ Then, the compactification
$\overline{\sA}_{g,n}$ is a normalization of the
morphism $\sA_{g,n}\to \overline{\sA}_{g,m}$ and so the morphism $\sA_{g,n}\to
\overline{\sA}_{g,m}$ factors through the morphism $\overline{\lambda}_{n,m}:
\overline{\sA}_{g,n}\to\overline{\sA}_{g,m}$(cf.\cite{FC}). Thus, we obtain the following  commutative diagram of morphisms
$$
\begin{CDS}
 \overline{\sA}_{g,n} \> \overline{\lambda}_{n,m} >> \overline{\sA}_{g,m} \\
 \novarr\SE  \overline{\lambda}_{n,1} E E \V V \overline{\lambda}_{m,1} V \\
\novarr \novarr \overline{\sA}_{g,1}.
\end{CDS}
$$

\begin{lemma}\label{boundary-divisor-relation-of-morphism}
Let $n,m\geq 1$ be two positive integers with $m|n.$
Let $\Sigma_{\frak{F}_0}:=\{\sigma_\alpha^{\frak{F}_0}\}$ be a
$\overline{\Gamma_{\frak{F}_0}}$(or $\mathrm{GL}(g,\Z)$)-admissible polyhedral decomposition of $C(\frak{F}_0)$
regular with respect to $\Sp(g,\Z).$ %where $\frak{F}_0$ is the standard minimal cusp of the Siegel variety $\frak{H}_g.$

%Assume that $\Sigma_{\frak{F}_0}$ has non $\Gamma(m)$-intersections.
Let $\overline{\sA}_{g,n}$(resp. $\overline{\sA}_{g,m}$) be the
symmetric toroidal compactification of $\sA_{g,n}$(resp.
$\sA_{g,m}$) constructed by  $\Sigma_{\frak{F}_0}.$
The morphism
$\overline{\lambda}_{n,m}:
\overline{\sA}_{g,n}\to\overline{\sA}_{g,m}$ has the following
property:
$$\overline{\lambda}_{n,m}^*D_{\infty,m}=\frac{n}{m}D_{\infty,n},$$
where $D_{\infty,m}:=\overline{\sA}_{g,m}\setminus\sA_{g,m}$ and
$D_{\infty,n}:=\overline{\sA}_{g,n}\setminus\sA_{g,n}.$
\end{lemma}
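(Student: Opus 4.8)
The plan is to reduce the statement to a purely toric computation on each chart $X_\sigma$, where the divisor $D_\infty$ is pulled back from the toric boundary. Fix the standard minimal cusp $\frak{F}_0$ and a top-dimensional cone $\sigma \in \Sigma_{\frak{F}_0}$; by the construction recalled just above, near the corresponding boundary stratum the compactification $\overline{\sA}_{g,l}$ is étale-locally modeled on the affine toric variety $X_\sigma(l) = \Spec \C[\sigma^\vee \cap M_{\frak{F}_0}(l)]$, with the boundary divisor $D_{\infty,l}$ being the restriction of the toric boundary divisor. Since $\{\zeta_\alpha^l := l\zeta_\alpha\}$ is a lattice basis of $L_{\frak{F}_0}(l)$ with dual basis $\{\delta_\alpha^l := \delta_\alpha/l\}$ of $M_{\frak{F}_0}(l)$, the $1$-dimensional faces of $\sigma$ (which index the toric boundary divisor components) correspond, under the primitive-generator normalization, to lattice vectors that rescale by exactly the factor $l$ when one passes from $L_{\frak{F}_0}(1)$ to $L_{\frak{F}_0}(l)$. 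I would first make this rescaling precise: if $v$ is the primitive generator of a ray $\varrho \prec \sigma$ with respect to $L_{\frak{F}_0}(m)$, then the primitive generator with respect to $L_{\frak{F}_0}(n)$ is $\tfrac{m}{n}v$ (recall $m|n$), since $L_{\frak{F}_0}(n) = \tfrac{n}{m} L_{\frak{F}_0}(m)$ as sublattices and primitivity is measured in the finer lattice.

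Next I would analyze the finite morphism $\lambda^\sigma : X_\sigma(n) \to X_\sigma(m)$ induced by the inclusion of semigroup algebras $\C[\sigma^\vee \cap M_{\frak{F}_0}(m)] \hookrightarrow \C[\sigma^\vee \cap M_{\frak{F}_0}(n)]$. This is the standard toric morphism attached to the lattice inclusion $L_{\frak{F}_0}(n) \hookrightarrow L_{\frak{F}_0}(m)$ (equivalently, the dual inclusion on the $M$-side), with the fan $\sigma$ held fixed. For such a morphism, the pullback of a toric divisor $D_\varrho^{(m)}$ associated to a ray $\varrho$ is $\lambda^{\sigma *} D_\varrho^{(m)} = c_\varrho \, D_\varrho^{(n)}$, where $c_\varrho$ is the ratio of lattice lengths of the primitive generators — precisely the index $[\,v_{L(m)} : v_{L(n)}\,] = n/m$ by the previous step. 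This is exactly the content of the standard formula for pullback of Weil divisors under a toric morphism refining only the lattice (see e.g. the divisor pullback formula in toric geometry); I would cite it from \cite{AMRT} or \cite{FC} rather than reprove it. Summing over the finitely many rays $\varrho \prec \sigma$ corresponding to components of $D_{\infty,m}$ through the stratum, and noting the multiplicity $n/m$ is the same for every ray, gives $\lambda^{\sigma *} D_{\infty,m}|_{X_\sigma} = \tfrac{n}{m} D_{\infty,n}|_{X_\sigma}$.

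Finally I would globalize: the charts $X_{\sigma}(l)$ (as $\sigma$ runs over $\Sigma_{\frak{F}_0}$ and then over all cusps $\frak{F}$ via the $\Sp(g,\Z)$-symmetric family $\{\Sigma_\frak{F}\}$) cover a neighborhood of $D_{\infty,l}$ in $\overline{\sA}_{g,l}$, and by construction $\overline{\lambda}_{n,m}$ restricts to $\lambda_{\frak{F}}^\sigma$ (hence to $\lambda^\sigma$ on the toric factor) on each such chart, compatibly with the gluing maps $\Pi_{\frak{F}_1,\frak{F}_2}$ and the group actions — this compatibility is exactly what was verified when $\overline{\lambda}_{n,m}$ was constructed above. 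Since the multiplicity $n/m$ is constant across all charts, the local identities patch to the global identity $\overline{\lambda}_{n,m}^* D_{\infty,m} = \tfrac{n}{m} D_{\infty,n}$ of divisors on $\overline{\sA}_{g,n}$. The main obstacle I anticipate is bookkeeping rather than depth: one must be careful that "primitive generator" is consistently measured in the correct lattice at each level $l$, and that the passage from the open charts $Z_{\frak{F}}'(l)$ down to $Z_{\frak{F}}(l)$ (quotient by $\Gamma(l) \cap \sN(\frak{F}) / \Gamma(l)\cap U^{\frak{F}}(\R)$) does not introduce ramification along the boundary divisor — this holds because that group acts freely in codimension one on the relevant strata for neat $\Gamma$, so the étale-local toric model is unaffected.
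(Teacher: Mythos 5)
Your proposal is correct and follows essentially the same route as the paper: reduce to the top-dimensional toric charts $X_{\sigma_{\max}}(l)$ at the minimal cusp, observe that the inclusion $\C[\sigma^\vee\cap M_{\frak{F}_0}(m)]\subset\C[\sigma^\vee\cap M_{\frak{F}_0}(n)]$ is obtained by adjoining $(n/m)$-th roots of the coordinates (equivalently, that the lattice index along each ray is $n/m$), and conclude that each boundary component pulls back with multiplicity $n/m$. One small slip: the primitive generator of a ray with respect to $L_{\frak{F}_0}(n)=\tfrac{n}{m}L_{\frak{F}_0}(m)$ is $\tfrac{n}{m}v$, not $\tfrac{m}{n}v$ (and $L(m)$ is the finer lattice, not $L(n)$), but your subsequent computation of the multiplicity as the index $n/m$ is correct, so the conclusion stands.
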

\begin{proof}
By the construction of boundary divisors of Siegel varieties from edges of the fan $\Sigma_{\frak{F}_0}$
in Theorem 2.22 of \cite{Y-Z}, %\ref{Infity-divisor-on-toroidal-compactification},
to study the relation between $D_{\infty,m}$ and $D_{\infty,n}$
is sufficient to study the morphism
$\lambda_{\frak{F}_0}^{\sigma_{\max}}:
\widetilde{\Delta}_{\frak{F}_0,\sigma_{\max}}(n)\to\widetilde{\Delta}_{\frak{F}_0,\sigma_{\max}}(m)$
for any top-dimensional cone $\sigma_{\max}$ in $\Sigma_{\frak{F}_0}.$

We can choose a basis
$\{\zeta_\alpha\}_{1}^{g(g+1)/2}$ of
$L_{\frak{F}_0}:=\Sp(g,\Z)\cap U^{\frak{F}_0}(\Z)$ such that
$$\sigma_{\max}=\{\sum_{\alpha=1}^{g(g+1)/2} \lambda_\alpha
\zeta_\alpha\,\, |\,\, \lambda_\alpha\in \R_{\geq 0},\, \,\,
\alpha=1,\cdots, g(g+1)/2\}.$$ Let $\{\delta_\alpha\}_{1}^{g(g+1)/2}$ be the dual basis of
$\{\zeta_\alpha\}_{1}^{g(g+1)/2}.$ Then
$$\sigma_{\max}^{\vee}=\{\sum_{\alpha=1}^{g(g+1)/2} \lambda_\alpha
\delta_\alpha\,\, |\,\, \lambda_\alpha\in \R_{\geq 0},\, \,\,
\alpha=1,\cdots, g(g+1)/2\}.$$
Since the inclusion $0\>>>\C[\sigma^{\vee}\cap
M_{\frak{F}_0}(m)]\>\subset>> \C[\sigma^{\vee}\cap
M_{\frak{F}_0}(n)]$ is of the following type
$$0\>>>\C[x_1,\cdots x_i,\cdots x_{g(g+1)/2}]\>\subset >> \C[\sqrt[\frac{n}{m}]{x_1},\cdots \sqrt[\frac{n}{m}]{x_i},\cdots \sqrt[\frac{n}{m}]{x_{g(g+1)/2}}],$$
we must have
$\overline{\lambda}_{n,m}^*D_{\infty,m}=\frac{n}{m}D_{\infty,n}.$\\
\end{proof}

\subsection{Spaces of Siegel cusp forms}

The Siegel space $\frak{H}_g$ has a global holomorphic coordinate system $\tau.$
Define a standard Euclidean form $d\mathcal{V}$ on $\frak{H}_g$ to be
$d\mathcal{V}_{\tau}:= \bigwedge_{1\leq i\leq j\leq g}d \tau_{ij}$ for $\tau=(\tau_{ij})_{1\leq i,j\leq g}\in\frak{H}_g.$
There is
$$d\mathcal{V}_{M(\tau)}= \det (C\tau+D)^{-(g+1)} d\mathcal{V}_{\tau}\, \, \mbox{ for } M=\left(
                                                                  \begin{array}{cc}
                                                                    A & B \\
                                                                    C & D \\
                                                                  \end{array}
                                                                \right)\in \Sp(g,\R).
$$

Let $\omega_{\frak{H}_g}$ be the canonical line bundle on $\frak{H}_g.$ For  any form $\varphi=f_{\varphi}\bigwedge\limits_{1\leq i\leq j\leq g}d
\tau_{ij}$ in $\Gamma(\frak{H}_g, \omega^{\otimes
k}_{\frak{H}_g}),$ there is an associated smooth positive
$(\frac{g(g+1)}{2}, \frac{g(g+1)}{2})$-form
$$
(\varphi\wedge \overline{\varphi})^{1/k}:=|f_\varphi|^{2/k}
\bigwedge_{1\leq i\leq j\leq
g}\frac{\sqrt{-1}}{2\pi}d\tau_{ij}\wedge d\overline{\tau_{ij}}.
$$

\begin{lemma}\label{cusp form-correspondence}
Let $n\geq 3,k\geq1,g\geq 2$ be integers.
Let $f\in \mathrm{M}_{k(g+1)}(\Gamma_g(n))$ be a modular form.
With respect to the correspondence
\begin{eqnarray*}
 \mathrm{M}_{k(g+1)}(\Gamma_g(n)) & \>\cong>> &  \Gamma(\frak{H}_g, \omega_{\frak{H}_g}^{\otimes
k})^{\Gamma_g(n)}:=\{s \in \Gamma(\frak{H}_g, \omega_{\frak{H}_g}^{\otimes k})\,\,|\,\, s \mbox{ is $\Gamma_g(n)$-invariant } \}\\
 f(\tau) & \longmapsto & \varphi_f:= f(\tau)(\bigwedge\limits_{1\leq i\leq
j\leq g}d \tau_{ij})^{\otimes k},
\end{eqnarray*}
the following two conditions are equivalent:
\begin{myenumii}
    \item $f\in \mathrm{S}_{k(g+1)}(\Gamma_g(n));$
    \item the holomorphic form $\varphi_f$ vanishes on all cusps of $\frak{H}_g.$
\end{myenumii}
Moreover, if $n\geq 3$ then (a) or (b) is equivalent to   the following\\
(c) $\int_{\sA_{g,n}}(\varphi_f\wedge \overline{\varphi_f})^{1/k}<\infty.$

\end{lemma}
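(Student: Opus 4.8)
The plan is to prove $(a)\Leftrightarrow(b)$ unconditionally, and then, under the extra hypothesis $n\ge 3$, to prove $(b)\Leftrightarrow(c)$.

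For $(a)\Leftrightarrow(b)$ I would argue at the standard corank-one cusp $\frak{F}(W_g)$ using the Fourier expansion \ref{fourier expansion}. Put $q=\exp(\tfrac{\sqrt{-1}\pi}{n}\tau_{gg})$, so that a neighbourhood of the generic point of the boundary divisor lying over $\frak{F}(W_g)$ in a smooth toroidal compactification is given by $q$ together with holomorphic coordinates in the $\tau'$- and $\tau_{ig}$-directions, with the boundary being $\{q=0\}$. Since $A\ge 0$ and $A_{gg}=0$ force the last row and column of $A$ to vanish, the $q^0$-part of $f$ is exactly $\Phi_n(f)(\tau')$, so $f=\Phi_n(f)(\tau')+O(q)$. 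Because $d\tau_{gg}=\tfrac{n}{\sqrt{-1}\pi}\tfrac{dq}{q}$, the form $\bigwedge_{i\le j}d\tau_{ij}$ is, up to a nowhere-vanishing holomorphic factor, the standard local generator of the log-canonical sheaf $\omega_{\overline{\sA}_{g,n}^{\mathrm{tor}}}(D_{\infty})$ along that boundary, hence $\varphi_f$ vanishes on $\frak{F}(W_g)$ iff $\Phi_n(f)=0$. Replacing $f$ by $f|\gamma$ with $\gamma\in\Gamma_g$, and using that $\Gamma_g=\Sp(g,\Z)$ acts transitively on primitive isotropic lines of $V_\Q$, hence on the corank-one rational boundary components, I get that $\varphi_f$ vanishes on every corank-one cusp iff $\Phi_n(f|\gamma)=0$ for all $\gamma\in\Gamma_g$, i.e. iff $f\in\mathrm{S}_{k(g+1)}(\Gamma_g(n))$. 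All deeper rational boundary components lie in the closure of the corank-one boundary, so vanishing of $\varphi_f$ there is automatic; this yields $(a)\Leftrightarrow(b)$.

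Now assume $n\ge 3$, so $\Gamma_g(n)$ is neat and a smooth toroidal compactification $\overline{\sA}_{g,n}^{\mathrm{tor}}$ with simple normal crossing boundary $D_{\infty}=\bigcup_i D_i$ exists. Boundedness of $f$ near the cusps (noted after \ref{fourier expansion}), together with the local computation of the previous paragraph, shows that $\varphi_f$ extends to a global section of $\omega_{\overline{\sA}_{g,n}^{\mathrm{tor}}}(D_{\infty})^{\otimes k}$ and that $(b)$ is equivalent to $\varphi_f$ vanishing along every component $D_i$, i.e. to $\varphi_f\in H^0\big(\overline{\sA}_{g,n}^{\mathrm{tor}},\ \omega_{\overline{\sA}_{g,n}^{\mathrm{tor}}}(D_{\infty})^{\otimes k}\otimes\sO(-D_{\infty})\big)$. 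To compare this with $(c)$ I work in a coordinate polycylinder $(z_1,\dots,z_m)$ in which $D_{\infty}=\{z_1\cdots z_l=0\}$; there $\omega_{\overline{\sA}_{g,n}^{\mathrm{tor}}}(D_{\infty})^{\otimes k}$ is generated by $e^{\otimes k}$ with $e=\tfrac{dz_1}{z_1}\wedge\cdots\wedge\tfrac{dz_l}{z_l}\wedge dz_{l+1}\wedge\cdots\wedge dz_m$, so $\varphi_f=h\,e^{\otimes k}$ with $h$ holomorphic, and
$$(\varphi_f\wedge\overline{\varphi_f})^{1/k}=|h|^{2/k}\Big(\prod_{i=1}^l|z_i|^{-2}\Big)\bigwedge_{i=1}^m\tfrac{\sqrt{-1}}{2\pi}dz_i\wedge d\overline{z_i}.$$
If $h$ does not vanish identically along some $\{z_i=0\}$ with $i\le l$, then near a point of that divisor with $h\neq 0$ the integral is comparable to $\int|z_i|^{-2}\,dA(z_i)=\infty$, so $(c)$ fails. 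Conversely, if $h$ vanishes along each $\{z_i=0\}$, $i\le l$, write $h=z_1\cdots z_l\,h'$ with $h'$ holomorphic and locally bounded; the integrand becomes $|h'|^{2/k}\prod_{i=1}^l|z_i|^{2/k-2}$ times a smooth form, and since $2/k-2>-2$ for every $k\ge 1$, each one-variable integral $\int_{|z_i|\le 1}|z_i|^{2/k-2}\,dA(z_i)$ converges; compactness of $\overline{\sA}_{g,n}^{\mathrm{tor}}$ then gives $\int_{\sA_{g,n}}(\varphi_f\wedge\overline{\varphi_f})^{1/k}<\infty$. Hence $(c)$ holds iff $\varphi_f$ vanishes along every $D_i$, which is $(b)$.

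The main obstacle is this last comparison: one must justify that $\varphi_f$ extends to a section of $\omega_{\overline{\sA}_{g,n}^{\mathrm{tor}}}(D_{\infty})^{\otimes k}$, establish the precise dictionary between ``$\varphi_f$ vanishes on the cusps'' and ``$\varphi_f$ is divisible by the boundary divisor'', and run the normal-crossing integrability estimate carefully — the sharp point being that the exponent $2/k-2$ stays strictly above $-2$ for all $k\ge 1$, so divisibility by $D_{\infty}$ is both necessary and sufficient for $L^{2/k}$-integrability. One should also keep in mind the reduction used throughout: vanishing of $\varphi_f$ on the corank-one cusps already forces vanishing on all deeper boundary components, so $(b)$ is in the end a condition on the finitely many divisors $D_i$, which is exactly what the definition of cusp form via $\Phi_n(f|\gamma)$ records.
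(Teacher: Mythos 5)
Your proof is correct, and for the equivalence with (c) it takes a genuinely different route from the paper's. For $(a)\Leftrightarrow(b)$ you and the paper do essentially the same thing (reduce to the corank-one cusp $\frak{F}(W_g)$, use the Fourier expansion and the transitivity of $\Gamma_g$ on rational corank-one boundary components, and note that deeper cusps lie in the closure of the corank-one ones), though your version is more explicit about the coordinate $q=\exp(\tfrac{\sqrt{-1}\pi}{n}\tau_{gg})$ and the identification of $\bigwedge d\tau_{ij}$ with the log-canonical frame. For the integrability statement the paper stays entirely on the analytic/Satake side: it proves $(a)\Leftrightarrow(c)$ directly by combining finiteness of the Bergman volume with the exponential decay $f=O(e^{-\pi y/2})$ of a cusp form in the distinguished direction (convergence), and with $|f|\ge c>0$ on a punctured neighborhood of a Satake boundary point together with the infinite Euclidean volume of such a neighborhood (divergence). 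You instead prove $(b)\Leftrightarrow(c)$ on the toroidal side: extend $\varphi_f$ to a section of $\omega_{\overline{\sA}_{g,n}^{\mathrm{tor}}}(D_\infty)^{\otimes k}$ and run the normal-crossing criterion $\int_{|z|<1}|z|^{2/k-2}\,dA<\infty$ versus $\int|z|^{-2}\,dA=\infty$; this in effect re-derives Sakai's Theorem 2.1, which the paper only invokes later (in the proof of Proposition \ref{Koecher Principal-A(g,n)}). Your approach buys a cleaner, purely local argument and makes the exponent $2/k-2>-2$ visible, at the cost of needing a smooth toroidal compactification (hence the hypothesis $n\ge 3$, which both proofs use); the paper's approach avoids compactifications altogether. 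One small point you should tighten: when you pass from ``$\varphi_f$ vanishes on all cusps'' to ``$h$ is divisible by \emph{every} boundary component $D_i$,'' note that an admissible fan may contain rays generated by semidefinite forms of rank $\ge 2$, whose divisors lie over deeper Satake strata and are not in the closure of the corank-one divisors; there you need the standard fact that for a cusp form $c(A)=0$ for all singular $A\ge 0$ (equivalently, $\mathrm{Tr}(A\xi)=0$ with $A,\xi\ge 0$ forces $A\xi=0$), which follows from your $(a)\Leftrightarrow(b)$ argument applied to all $\Gamma_g$-translates but is not literally the ``local computation of the previous paragraph.''
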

\begin{proof}
Let $W_{g}$ be the one dimension isotropic real subspace of
$V_\R$ generated by $e_g.$

\noindent"$(a)\Leftrightarrow (b)$": We only show the case of $n=1,$ the others are similar. Suppose $f$ is a cusp form.
Then, $f$ vanishes on the cusp $\frak{F}(W_g),$ and so $f$
vanishes on any cusp $\frak{F}$ with $\frak{F}(W_g)\prec\frak{F}.$
Since $\varphi_f$ is a $\Gamma_g$-invariant form, $\varphi$
vanishes on all proper cusps of $\frak{H}_g.$
The converse part is obvious.\\

Assume that $n\geq 3 .$ We begin to show that "$(a)\Leftrightarrow (c)$":

\begin{itemize}
    \item Suppose $f$ is a cusp form. Then, $$f(\left(
\begin{array}{cc}
  \tau^{'} & 0 \\
  0 & \sqrt{-1}y
\end{array}%
\right))=O(\exp(-\frac{\pi}{2} y))\mbox{ for } y>>0,$$ and so
$\int_{\sA_{g,n}}(\varphi_f\wedge
\overline{\varphi_f})^{1/k}<\infty.$

    \item Suppose $f$ is not a cusp. Then, there is a
    $\tau^{'}\in\frak{H}_{g-1}$ such that $\Phi_n(f)(\tau^{'})\neq 0.$
Thus, there is a  neighborhood $U_{\tau^{'}}$ of fundamental domain such that $\tau^{'}$ is in the closure of $U_{\tau^{'}} $ 
such that
$|f(Z)|\geq c>0 $  on  $U_{\tau^{'}}$
for some positive constant $c.$ Therefore,
$$\int_{\sA_{g,n}}(\varphi_f\wedge
\overline{\varphi_f})^{1/k}\geq
\int_{U_{\tau^{'}}}(\varphi_f\wedge
\overline{\varphi_f})^{1/k}=\infty.$$
\end{itemize}
\end{proof}

\begin{corollary}\label{Koecher Principal-A(g,n)} Let $n\geq 3,k\geq1,g\geq 2$ be integers.
Let $\overline{\sA}_{g,n}$  be an arbitrary  smooth toroidal compactification of $\sA_{g,n}$
with simple normal crossing boundary divisor $D_{\infty,n}:=\overline{\sA}_{g,n}\setminus\sA_{g,n}.$

Then, we have :
$$\Gamma(\overline{\sA}_{g,n}, \omega_{\overline{\sA}_{g,n}}(D_{\infty,n})^{\otimes k})
   \cong \Gamma(\sA_{g,n}, \omega_{\sA_{g,n}}^{\otimes k})
   \cong \mathrm{M}_{k(g+1)}(\Gamma_g(n)),
$$
 $$\Gamma(\overline{\sA}_{g,n}, \omega_{\overline{\sA}_{g,n}}(D_{\infty,n})^{\otimes k-1}\otimes\omega_{\overline{\sA}_{g,n}}) \cong \mathrm{S}_{k(g+1)}(\Gamma_g(n)).$$
where $\omega_{\overline{\sA}_{g,n}}$ is the canonical line bundle on $\overline{\sA}_{g,n}$ and $\omega_{\sA_{g,n}}$ is the canonical line bundle on $\sA_{g,n}.$
\end{corollary}
\begin{proof}
With \cite{AMRT}, Mumford shows in \cite{Mum77} that
the canonical line bundle $\omega_{\sA_{g,n}}$ extends to an ample line bundle $L_{g,n}$ on $\sA_{g,n}^*$
and that the canonical
morphism $\overline{\pi}_{g,n}:\overline{\sA}_{g,n}\to \sA_{g,n}^* $ is proper with
$\overline{\pi}_{g,n}^*(L_{g,n})=\omega_{\overline{\sA}_{g,n}}(D_{\infty,n}).$

\begin{itemize}

\item Then,  $\sO_{\sA_{g,n}^*}=(\overline{\pi}_{g,n })_*\sO_{\overline{\sA}_{g,n}}$
and so
$(\overline{\pi}_{g,n})_*\omega_{\overline{\sA}_{g,n}}(D_{\infty,n})^{\otimes k}=(\overline{\pi}_{g,n})_*(\overline{\pi}_{g,n})^*L^{\otimes k}
\cong L^{\otimes k}.$
Thus,
$\Gamma(\sA_{g,n}^*, L^{\otimes k}_{g,n})\cong \Gamma(\sA_{g,n}^*, (\overline{\pi}_{g,n})_*\omega_{\overline{\sA}_{g,n}}(D_{\infty,n})^{\otimes k})
\cong \Gamma(\overline{\sA}_{g,n},\omega_{\overline{\sA}_{g,n}}(D_{\infty,n})^{\otimes k} ).$
Let $j: \sA_{g,n}\>\hookrightarrow>>\sA_{g,n}^*$ be open embedding. Since $\sA_{g,n}$ is normal and $\mathrm{codim}(\sA_{g,n}^*\setminus\sA_{g,n})=g\geq 2,$
we then have $j_*\omega_{\sA_{g,n}}^{\otimes k}=L^{\otimes k}.$
Thus,
$\Gamma(\sA_{g,n}^*, L^{\otimes k})\cong \Gamma(\sA_{g,n},\omega_{\sA_{g,n}}^{\otimes k}).$ That  $\Gamma(\sA_{g,n},\omega_{\sA_{g,n}}^{\otimes k})\cong \mathrm{M}_{k(g+1)}(\Gamma_g(n))$ is obvious.
\item By the lemma \ref{cusp form-correspondence} and the lemma \ref{lemma on ple-poles}, we have
$$\mathrm{S}_{k(g+1)}(\Gamma_g(n))\cong \{s \in \Gamma(\sA_{g,n}, \omega_{\sA_{g,n}}^{\otimes k})\,\,|\,\,\int_{\sA_{g,n}} (s\wedge \overline{s})^{1/k} < \infty \}.$$
Shown in Theorem 2.1 of \cite{Sak77}, there holds
$$\{s \in \Gamma(\sA_{g,n}, \omega_{\sA_{g,n}}^{\otimes k})\,\,|\,\,\int_{\sA_{g,n}} (s\wedge \overline{s})^{1/k} < \infty \}\cong\Gamma(\overline{\sA}_{g,n}, \omega_{\overline{\sA}_{g,n}}(D_{\infty,n})^{\otimes k-1}\otimes\omega_{\overline{\sA}_{g,n}}). $$
\end{itemize}
\end{proof}
\begin{myrem}Consider the short sequence
    $$0\>>>\omega_{\overline{\sA}_{g,n}}(D_{\infty,n})^{\otimes k-1}\otimes\omega_{\overline{\sA}_{g,n}} \>>>\omega_{\overline{\sA}_{g,n}}(D_{\infty,n})^{\otimes k}\>>>\omega_{\overline{\sA}_{g,n}}(D_{\infty,n})^{\otimes k}|_{D_{\infty,n}}\>>>0,$$
we have that $s\in \Gamma(\overline{\sA}_{g,n}, \omega_{\overline{\sA}_{g,n}}(D_{\infty,n})^{\otimes k})\cong \mathrm{M}_{k(g+1)}(\Gamma_g(n))$  is a cusp form if and only if $s|_{D_{\infty,n}}=0.$
Certainly, if $\overline{\sA}_{g,n}$ is projective then this result can also obtained by regarding $\overline{\sA}_{g,n}$ as the normalization of the blowing-up of $\sA_{g,n}^*$ along the ideal sheaf $\sJ$ supported on the subscheme $\sA_{g,n}^*\setminus\sA_{g,n}$(cf. Chap IV \cite{AMRT}).\\
\end{myrem}
%\begin{myenumii}%Let $\sN:=\omega_{\overline{\sA}_{g,n}}(D_{\infty,n})$ and $\sM:=\sO_{\overline{\sA}_{g,n}}(-D_{g,n})$

%\item
%That $s\in \Gamma(\overline{\sA}_{g,n}, \omega_{\overline{\sA}_{g,n}}(D_{\infty,n})^{\otimes k})\cong \mathrm{M}_{k(g+1)}(\Gamma_g(n))$  is a cusp form if and only if $s|_{D_{\infty,n}}=0.$
%Certainly, this result can also obtained by regarding the projective smooth toroidal compactification $\overline{\sA}_{g,n}$ as the normalization of the blowing-up of $\sA_{g,n}^*$ along the ideal sheaf $\sJ$ supported on the subscheme $\sA_{g,n}^*\setminus\sA_{g,n}$(cf. Chap IV \cite{AMRT}).

%\item Since the line bundle $\omega_{\overline{\sA}_{g,n}}(D_{\infty,n})$ is big,
%there is a finite positive  number $N_0$ such that
%$$\frac{\dim_\C\mathrm{S}_{k(g+1)}(\Gamma_g(n))}{k^{g(g+1)/2}}=\frac{\dim_\C\Gamma(\overline{\sA}_{g,n}, \omega_{\overline{\sA}_{g,n}}(D_{\infty,n})^{\otimes k-1}\otimes\omega_{\overline{\sA}_{g,n}})}{k^{g(g+1)/2}}>0   $$
%for any integer $k\geq N_0.$ \\
%\end{myenumii}

Since the logarithmic canonical line bundle $\omega_{\overline{\sA}_{g,n}}(D_{\infty,n})$ of any smooth compactification  $\overline{\sA}_{g,n}$ of the Siegel variety
$\sA_{g,n}$  is big,
there is a finite positive  number $N_0$ such that
$$\frac{\dim_\C\mathrm{S}_{k(g+1)}(\Gamma_g(n))}{k^{g(g+1)/2}}=\frac{\dim_\C\Gamma(\overline{\sA}_{g,n}, \omega_{\overline{\sA}_{g,n}}(D_{\infty,n})^{\otimes k-1}\otimes\omega_{\overline{\sA}_{g,n}})}{k^{g(g+1)/2}}>0   $$
for any integer $k\geq N_0.$ Actually, for dimensions of spaces of Siegel cusp forms,we have the following asymptotic formula which is probably well known to experts :
\begin{theorem}\label{estimate-dimension}
 Let $n\geq 3,k\geq1,g\geq 2$ be integers.
\begin{eqnarray*}
% \nonumber to remove numbering (before each equation)
  \limsup_{k\to \infty} \frac{\dim_\C\mathrm{S}_{k(g+1)}(\Gamma_g(n))}{k^{g(g+1)/2}}
   &=&[\Gamma_g(1): \Gamma(n)]\prod_{i=1}^g\zeta(1-2i),
\end{eqnarray*}
where $\zeta(s)$ is the Riemann-Zeta function.
\end{theorem}
\begin{proof} Let $\overline{\sA}_{g,n}$  be an arbitrary   smooth toroidal compactification of $\sA_{g,n}$
with simple normal crossing boundary divisor $D_{\infty,n}:=\overline{\sA}_{g,n}\setminus\sA_{g,n}.$ Let $\omega_{\overline{\sA}_{g,n}}$ be the canonical line bundle on $\overline{\sA}_{g,n}$ and $\omega_{\sA_{g,n}}$ the canonical line bundle on $\sA_{g,n}.$

 Define $L:=\omega_{\overline{\sA}_{g,n}}(D_{\infty,n}).$ Siegel's lemma says that there exists $C>0$ such that $$\dim_\C H^0(D_{\infty,n}, L^{\otimes k}|_{D_{\infty,n}}) \leq C k^{g(g+1)/2-1}\,\,\,\,
 \forall k\in \Z_{\geq 0},$$ then by the corollary \ref{Koecher Principal-A(g,n)} we get :
 \begin{eqnarray*}
 % \nonumber to remove numbering (before each equation)
 \limsup_{k\to \infty} \frac{\dim_\C\mathrm{S}_{k(g+1)}(\Gamma_g(n))}{k^{g(g+1)/2}}   &=& \limsup_{k\to \infty}\frac{\dim_\C H^0(\overline{\sA}_{g,n}, L^{\otimes k-1}\otimes\omega_{\overline{\sA}_{g,n}})}{k^{g(g+1)/2}} \\
    &=& \limsup_{k\to \infty}\frac{H^0(\overline{\sA}_{g,n}, L^{\otimes k})}{k^{g(g+1)/2}}.
 \end{eqnarray*}

We recall Demailly's holomorphic Morse inequalities in \cite{Dem89}:
Let $\widetilde{H}$ be an arbitrary Hermitian metric on the bundle
$L$ and $R(L,\widetilde{H})$ the curvature form of the metric connection of the Hermitian line bundle $(L,\widetilde{H}).$
For any non-negative integer $q,$ let $X(q,\widetilde{H})$ be the set $x$ of $\overline{\sA}_{g,n}$ such that $\frac{\sqrt{-1}}{2\pi}R(L,\widetilde{H})_x$
is non degenerate with exact $q$   negative eigenvalues. Set $X(\leq q, \widetilde{H}):=\bigcup\limits_{i=0}^qX(i,\widetilde{H}).$ For any non-negative integer $q,$  we have
$$\sum_{j=0}^q\dim_\C H^j(\overline{\sA}_{g,n}, L^{\otimes k})\leq \frac{k^{\frac{g(g+1)}{2}}}{(\frac{g(g+1)}{2})!} \int_{X(\leq q,\widetilde{H})} (-1)^q\bigwedge^{\frac{g(g+1)}{2}}c_1(L,\widetilde{H})+o(k^{\frac{g(g+1)}{2}})\,\,\mbox{   as }\,\, k\to \infty$$ with
equality for $q=g(g+1)/2.$ In particular, for any non-negative integer $q,$ there is the  weak More inequalities
$$\dim_\C H^q(\overline{\sA}_{g,n}, L^{\otimes k})\leq \frac{k^{\frac{g(g+1)}{2}}}{(\frac{g(g+1)}{2})!}
\int_{X(q,\widetilde{H})} (-1)^q\bigwedge^{\frac{g(g+1)}{2}}c_1(L,\widetilde{H})+o(k^{\frac{g(g+1)}{2}})\,\,\mbox{   as }\,\, k\to \infty.$$

We now use the arguments in section 2.3.3 of \cite{MM}
to show that for any integer $q\geq1,$
\begin{equation}\label{estimate-dimension-1}
\dim_\C H^q(\overline{\sA}_{g,n}, L^{\otimes k})= o(k^{\frac{g(g+1)}{2}})\,\,\mbox{   as }\,\, k\to \infty.
\end{equation}

In the lemma \ref{tangent bundle-Siegel varietie}, we  obtain  that $L$ is a numerically effective(nef) line bundle on  $\overline{\sA}_{g,n}.$
Therefore, for every small $\epsilon >0$ there is a smooth metric $H_\epsilon $ on $L$ such that $c_1(L,\widetilde{H})\geq -\epsilon \theta,$ where $\theta$ is a given positive $(1,1)$-form on $\overline{\sA}_{g,n}.$
On $ \overline{\sA}_{g,n},$  for any positive integer $q,$ we have
\begin{eqnarray*}
% \nonumber to remove numbering (before each equation)
 0\leq\frac{(-1)^q}{(\frac{g(g+1)}{2})!}c_1(L,H_\epsilon)^{g(g+1)/2}\chi_\epsilon &\leq & \frac{1}{q!}(\epsilon \theta)^q
\wedge\frac{(-1)^q}{(\frac{g(g+1)}{2}-q)!} (c_1(L,H_\epsilon)+\epsilon\theta) ^{g(g+1)/2-q} \\
   &\leq & \frac{1}{q!}(\epsilon \theta)^q
\wedge\frac{(-1)^q}{(\frac{g(g+1)}{2}-q)!} (c_1(L,H_\epsilon)+\theta) ^{g(g+1)/2-q},
\end{eqnarray*}
where $\chi_\epsilon$ is the characteristic function of $X(q,H_\epsilon).$ By Demailly's weak More inequalities, we then obtain
$$\dim_\C H^q(\overline{\sA}_{g,n}, L^{\otimes k})\leq \frac{k^{\frac{g(g+1)}{2}}\epsilon^q}{(\frac{q!g(g+1)}{2}-q)!}
\int_{ \overline{\sA}_{g,n}} [\theta]^q([c_1(L)]+[\theta])^{\frac{g(g+1)}{2}-q}+o(k^{\frac{g(g+1)}{2}})\,\,\mbox{   as }\,\, k\to \infty,$$
and so we obtain \ref{estimate-dimension-1}.

Since $L$ is  big by (4) of the theorem\ref{degenerate-Bergamn}, the Morse inequalities for $q=g(g+1)/2$ shows that
$$\limsup_{k\to \infty}\frac{H^0(\overline{\sA}_{g,n}, L^{\otimes k})}{k^{g(g+1)/2}}= \frac{1}{(\frac{g(g+1)}{2})!}\int_{\overline{\sA}_{g,n}}\bigwedge^{\frac{g(g+1)}{2}}c_1(L,\widetilde{H}).$$
By (4) of the theorem \ref{degenerate-Bergamn}, we actually get
$$\int_{\overline{\sA}_{g,n}}\bigwedge^{\frac{g(g+1)}{2}}c_1(L,\widetilde{H})=\int_{\sA_{g,n}}\bigwedge^{\frac{g(g+1)}{2}}\omega_{\mathrm{can}},$$
where $\omega_{\mathrm{can}}$ is the K\"ahler form of the canonical Bergman metric $H_{\mathrm{can}}$ on $\sA_{g,n}.$

Therefore, we obtain :
\begin{eqnarray*}
% \nonumber to remove numbering (before each equation)
   \limsup_{k\to \infty}\frac{H^0(\overline{\sA}_{g,n}, L^{\otimes k})}{k^{g(g+1)/2}}&=& \mathrm{Vol}(\sA_{g,n}) \\
   &=&[\Gamma_g(1): \Gamma(n)]\mathrm{Vol}(\sA_{g,1})  \\
   &=&[\Gamma_g(1): \Gamma(n)]\prod_{i=1}^g\zeta(1-2i).
\end{eqnarray*}
The last equality for volume can be found in \cite{Har}.
\end{proof}
\begin{proof}[Another proof of Theorem \ref{estimate-dimension}]
We have $$H^q(\overline{\sA}_{g,n}, \omega_{\overline{\sA}_{g,n}}(D_{\infty,n})^{\otimes k-1}\otimes\omega_{\overline{\sA}_{g,n}} )=0\,\,\, \mbox{ for }q\geq 1,k\geq 2$$
by  the Kawamata-Viehweg vanishing theorem(cf.\cite{EV}), then we
use the Riemann-Roch-Hirzebruch theorem to obtain $$\lim\limits_{k\to \infty} \frac{\dim_\C\mathrm{S}_{k(g+1)}(\Gamma_g(n))}{k^{g(g+1)/2}}
  =\mathrm{Vol}(\sA_{g,n})=[\Gamma_g(1): \Gamma(n)]\prod_{i=1}^g\zeta(1-2i).$$\\
\end{proof}

\subsection{General type of Siegel varieties with suitable level structures}
We can also get the following result by the theorem \ref{degenerate-Bergamn}.
\begin{corollary}[Mumford  cf.\cite{Mum77}]\label{Mumford-log-general-type}
Let $g\geq 1, n\geq 3$ be two integers. The Siegel variety $\sA_{g,n}$ is of
logarithmic general type.
\end{corollary}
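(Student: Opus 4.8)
The plan is to read the statement directly off Theorem \ref{degenerate-Bergamn}(4). Recall that, by the definitions stated just above, $\sA_{g,n}$ is of logarithmic general type precisely when $\overline{\kappa}(\sA_{g,n}) = \dim\sA_{g,n}$, i.e. when $\kappa\bigl(\sO_{\overline{X}}(K_{\overline{X}}+D),\overline{X}\bigr) = \dim\overline{X}$ for one smooth compactification $\overline{X}$ of $\sA_{g,n}$ with simple normal crossing boundary $D$ (the value being independent of the chosen compactification, cf.\cite{Iitaka77}); and a line bundle $L$ on $\overline{X}$ satisfies $\kappa(L,\overline{X}) = \dim\overline{X}$ exactly when it is big. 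Thus the whole assertion is equivalent to the bigness of the log canonical bundle $\sO_{\overline{X}}(K_{\overline{X}}+D)$.

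To obtain this I would take $\Gamma = \Gamma_g(n)$ --- a neat arithmetic subgroup of $\Sp(g,\Z)$ since $n\geq 3$ --- and choose $\overline{X}$ a smooth compactification of $\sA_{g,n}$ with simple normal crossing boundary $D_{\infty,n}$. Using the standard identification of the determinant of the logarithmic cotangent bundle,
$$\sO_{\overline{X}}(K_{\overline{X}}+D_{\infty,n}) \;\cong\; \omega_{\overline{X}}(D_{\infty,n}) \;=\; {\textstyle\bigwedge}^{\dim\sA_{g,n}}\Omega^1_{\overline{X}}(\log D_{\infty,n}),$$
Theorem \ref{degenerate-Bergamn}(4) says exactly that this line bundle is big on $\overline{X}$. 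Hence $\overline{\kappa}(\sA_{g,n}) = \dim\overline{X} = \dim\sA_{g,n}$, which is the claim for $g\geq 2$ (the standing hypothesis $g>2$ of the paper suffices, and the proof of Theorem \ref{degenerate-Bergamn} goes through for $g=2$ as well). For $g=1$, $\sA_{1,n}$ is the open modular curve of level $n$, and on its smooth projective model $\overline{\sA}_{1,n}$ the divisor $K_{\overline{\sA}_{1,n}}+D_{\infty,n}$ has positive degree for $n\geq 3$, hence is ample, in particular big, so $\sA_{1,n}$ is of logarithmic general type as well.

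All the real content has been packaged into Theorem \ref{degenerate-Bergamn}(4) --- whose proof runs through the degeneration estimates for the Bergman (= Hodge = K\"ahler--Einstein) metric and Siu's numerical criterion to pass from numerical effectivity to bigness --- so there is essentially no obstacle in the corollary itself. The only points deserving a moment's care are the formal translation ``$\omega_{\overline{X}}(D_{\infty,n})$ is big $\iff$ $\sA_{g,n}$ is of logarithmic general type'' and the handling of the small-degree cases that lie outside the paper's running convention $g>2$.
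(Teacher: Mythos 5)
Your proposal is correct and follows essentially the same route as the paper, which deduces the corollary in one line from the bigness of $\omega_{\widetilde{\sA}_{g,\Gamma}}(D_{\infty})$ established in Theorem \ref{degenerate-Bergamn}(4), the translation to logarithmic general type being immediate from the definitions. Your separate handling of the cases $g=1$ and $g=2$ (which fall outside the paper's standing convention $g>2$) is a reasonable extra precaution that the paper itself glosses over.
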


%In this subsection, we study that how to use the method of cyclic covering to get a  variety of general-type from a variety of logarithmic
%general type.

From  the covering lemma  \ref{kawamata-covering-trick} and the  theorem \ref{general result on general type} in  A1, we immediately have :
\begin{corollary}[Mumford-Tai's Theorem  cf. Chap. IV. \cite{AMRT} and \cite{Mum77}]\label{Mumford-Tai's Theorem}
Let $g\geq 1,l\geq 3$ be two integers. There is a positive integer $N(g,l)$ such that the Siegel variety $\sA_{g, kl}$ is
of general type for any integer $k>N(g,l).$
\end{corollary}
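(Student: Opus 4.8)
The plan is to replace the abstract Kawamata covering used in the proof of Theorem \ref{general result on general type} by the concrete morphism $\overline{\lambda}_{kl,l}\colon \overline{\sA}_{g,kl}\to\overline{\sA}_{g,l}$ constructed in Section 2.1, whose boundary behaviour is governed by Lemma \ref{boundary-divisor-relation-of-morphism}. Since $l\geq 3$, the Siegel variety $\sA_{g,l}$ is smooth and, by Corollary \ref{Mumford-log-general-type}, of logarithmic general type. First I would fix the symmetric projective smooth toroidal compactification $\overline{\sA}_{g,l}$ attached to $\Sigma_{\frak F_0}$, with boundary divisor $D_{\infty,l}$, so that $\overline{\kappa}(\sA_{g,l})=n$, where $n:=\dim_\C\sA_{g,l}=g(g+1)/2$. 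Exactly as in the proof of Theorem \ref{general result on general type}, Sakai's Proposition 2.2 of \cite{Sak77} then supplies an integer $N>0$ and meromorphic $N$-ple $n$-forms
$$\eta_0,\dots,\eta_n\in H^0(\overline{\sA}_{g,l},\sO_{\overline{\sA}_{g,l}}(NK_{\overline{\sA}_{g,l}}+(N-1)D_{\infty,l}))$$
whose ratios $\eta_1/\eta_0,\dots,\eta_n/\eta_0$ form a transcendence basis of $\C(\sA_{g,l})$.

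Next I would fix $k\geq 1$ and build $\overline{\sA}_{g,kl}$ from the same fan $\Sigma_{\frak F_0}$, so that Section 2.1 provides the finite surjective morphism $\overline{\lambda}_{kl,l}$, restricting over the interiors to the unramified covering $\sA_{g,kl}\to\sA_{g,l}$ (unramified because $\Gamma_g(l)$, being neat, acts freely on $\frak H_g$). Thus $\overline{\lambda}_{kl,l}^*\eta_j$ is holomorphic on $\sA_{g,kl}$ and its polar locus is contained in $D_{\infty,kl}$. By Lemma \ref{boundary-divisor-relation-of-morphism} we have $\overline{\lambda}_{kl,l}^*D_{\infty,l}=k\,D_{\infty,kl}$, and the toric computation in the proof of that lemma shows that at the generic point of each component of $D_{\infty,kl}$ the morphism has the normal form $w=u^k$ transverse to the boundary. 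I would then repeat the pole-order bookkeeping of Theorem \ref{general result on general type}: writing $\eta_j$ locally as $g(w)(dw_1\wedge\cdots\wedge dw_n)^N/(w_1^{s_1}\cdots w_t^{s_t})$ with $0\leq s_i\leq N-1$ (Lemma \ref{lemma on ple-poles}), the substitution produces a factor $u_i^{N(k-1)}$ from $(dw_i)^N$ against a denominator $u_i^{ks_i}$, so the pole order of $\overline{\lambda}_{kl,l}^*\eta_j$ along $\{u_i=0\}$ is at most $ks_i-N(k-1)\leq k(N-1)-N(k-1)=N-k$. Hence as soon as $k\geq N$ every $\overline{\lambda}_{kl,l}^*\eta_j$ is a holomorphic $N$-ple $n$-form, that is, by Lemma \ref{lemma on ple-poles},
$$\overline{\lambda}_{kl,l}^*\eta_0,\dots,\overline{\lambda}_{kl,l}^*\eta_n\in H^0(\overline{\sA}_{g,kl},\sO_{\overline{\sA}_{g,kl}}(NK_{\overline{\sA}_{g,kl}})).$$

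Finally, since $\overline{\lambda}_{kl,l}$ is dominant, $\overline{\lambda}_{kl,l}^*$ embeds $\C(\sA_{g,l})$ into $\C(\sA_{g,kl})$; thus $\overline{\lambda}_{kl,l}^*\eta_0\not\equiv 0$ and the $n$ functions $\overline{\lambda}_{kl,l}^*\eta_j/\overline{\lambda}_{kl,l}^*\eta_0=\overline{\lambda}_{kl,l}^*(\eta_j/\eta_0)$ remain algebraically independent over $\C$, hence form a transcendence basis of $\C(\sA_{g,kl})$ (which has transcendence degree $n$ over $\C$). This forces the rational map attached to $|NK_{\overline{\sA}_{g,kl}}|$ to have $n$-dimensional image, so $\kappa(\sA_{g,kl})=n=\dim\sA_{g,kl}$ and $\sA_{g,kl}$ is of general type for every $k\geq N$; taking $N(g,l):=N-1$ gives the corollary. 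The step I expect to be the main obstacle is justifying the local normal form $w=u^k$ — equivalently, that $\overline{\lambda}_{kl,l}$ has ramification index exactly $k$ along each component of $D_{\infty,kl}$ and is étale elsewhere — which should be extracted from the inclusion of semigroup algebras $\C[\sigma^\vee\cap M_{\frak F_0}(l)]\subset\C[\sigma^\vee\cap M_{\frak F_0}(kl)]$, locally $x_i\mapsto(\sqrt[k]{x_i})^k$, already exhibited in the proof of Lemma \ref{boundary-divisor-relation-of-morphism}; the remainder is the computation carried out for Theorem \ref{general result on general type}.
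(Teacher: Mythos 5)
Your proof is correct, and it takes a genuinely different (and in fact more complete) route than the one the paper records. The paper derives the corollary in one line from Kawamata's covering Lemma \ref{kawamata-covering-trick} and Theorem \ref{general result on general type}; but, as literally stated, Theorem \ref{general result on general type} only produces \emph{some} auxiliary Kawamata cover $Y\to\sA_{g,l}$ of general type, and general type does not descend along finite surjective morphisms, so the assertion about the specific varieties $\sA_{g,kl}$ does not follow ``immediately'' from that theorem alone. What you do instead is run the same pole-order computation with the abstract Kawamata cover replaced by the concrete level cover $\overline{\lambda}_{kl,l}\colon\overline{\sA}_{g,kl}\to\overline{\sA}_{g,l}$, using Lemma \ref{boundary-divisor-relation-of-morphism} (ramification of order exactly $k$ along every boundary component, unramified over the interior by neatness of $\Gamma_g(l)$ for $l\ge 3$) in place of the divisibility condition of Lemma \ref{kawamata-covering-trick}; your estimate $ks_i-N(k-1)\le N-k\le 0$ for $k\ge N$ is precisely the computation in the proof of Theorem \ref{general result on general type} with $d$ replaced by $k$, and the transcendence-basis conclusion is identical. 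This is the classical Mumford--Tai argument, which is what the paper's citation is really pointing at; your version buys the statement about $\sA_{g,kl}$ itself, which the abstract covering cannot deliver. Two technical points you share with the paper and could flag explicitly: the boundary of the symmetric toroidal compactification need not be simple normal crossing (the paper invokes Hironaka for this in the proof of Theorem \ref{Theorem on general-type-1}), but since pole orders are read off at generic points of boundary components and the complement of those generic points has codimension at least two in the smooth variety $\overline{\sA}_{g,kl}$, holomorphy of the pulled-back $N$-ple forms there extends them to global sections of $NK_{\overline{\sA}_{g,kl}}$; and the normal form $w=u^{k}$ at such generic points is, as you say, exactly the content of the semigroup-algebra inclusion exhibited in the proof of Lemma \ref{boundary-divisor-relation-of-morphism}, combined with smoothness of the cones regular with respect to $\Sp(g,\Z)$.
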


Now we describe a relation between the existence of nontrivial cusp forms and the
type of manifolds : The existence of a nontrivial Siegel cusp form implies the general type of Siegel varieties with certain level structure.
%If the Siegel variety $\sA_{g,n}$($n\geq 3$) is not of general type then there
%does not exist nontrivial Siegel cusp form in $\mathrm{M}_{k(g+1)}(\Gamma_g)$ for any positive integer $k$ less than $n.$
Actually, the spaces of Siegel cusp forms supply the following  effective version of Mumford-Tai's theorem  : %(cf. Chap. IV. \cite{AMRT} and \cite{Mum77}):
\begin{theorem}\label{Theorem on general-type-1}

Let $l$ be an arbitrary positive integer and let  $g\geq 2$ be an integer.
If
$$N(g,l):=\min \{k\in \Z_{>0}\,\, |\,\, \dim_\C \mathrm{S}_{k(g+1)}(\Gamma_g(l))>0\}$$
is a finite number  then the Siegel variety $\sA_{g,Nl}$ is of general type
for any integer $N\geq \max\{\frac{3}{l},N(g,l)\}.$
\end{theorem}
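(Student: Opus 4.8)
The plan is to reduce the statement to Theorem \ref{general result on general type} together with the dictionary between cusp forms and pluricanonical sections established in Proposition \ref{Koecher Principal-A(g,n)}. First I would fix $N\geq \max\{3/l, N(g,l)\}$ and set $n:=Nl$, so that $n\geq 3$ and $\Gamma_g(n)$ is a neat arithmetic subgroup; thus $\sA_{g,n}$ is a quasi-projective manifold and, by Corollary \ref{Mumford-log-general-type}, it is of logarithmic general type. Applying Theorem \ref{general result on general type} to $X=\sA_{g,n}$ produces a nonsingular quasi-projective variety $Y$ of general type with a finite surjective morphism $f:Y\to \sA_{g,n}$; but for the \emph{effective} statement one wants $Y$ itself to be a Siegel variety, so the real content is to verify that the covering constructed in the proof of Theorem \ref{general result on general type} can be taken to be $\sA_{g,Nl}$ sitting over $\sA_{g,l}$ (or, more directly, to rerun that argument with the tower $\overline{\lambda}_{Nl,l}:\overline{\sA}_{g,Nl}\to\overline{\sA}_{g,l}$ in place of Kawamata's abstract covering).

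The key steps, in order. (i) Since $N(g,l)$ is finite and $N\geq N(g,l)$, Proposition \ref{Koecher Principal-A(g,n)} (applied with level $l$, not $n$) gives a nonzero section of $\omega_{\overline{\sA}_{g,l}}(D_{\infty,l})^{\otimes N-1}\otimes\omega_{\overline{\sA}_{g,l}}$; equivalently, by Lemma \ref{lemma on ple-poles}, a nonzero $N$-ple $\bigl(\tfrac{g(g+1)}2\bigr)$-form on $\overline{\sA}_{g,l}$ with at most $(N-1)$-ple poles along $D_{\infty,l}$. Here one must make sure $N\geq 3/l$ is only needed to guarantee $l$ is unrestricted while $Nl\geq 3$, so that the target $\sA_{g,Nl}$ has a neat level structure and a smooth toroidal compactification with SNC boundary; the source level $l$ may be as small as $1$. (ii) Pull this form back along $\overline{\lambda}_{Nl,l}:\overline{\sA}_{g,Nl}\to\overline{\sA}_{g,l}$. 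By Lemma \ref{boundary-divisor-relation-of-morphism}, $\overline{\lambda}_{Nl,l}^{*}D_{\infty,l}=\tfrac{Nl}{l}D_{\infty,Nl}=N\,D_{\infty,Nl}$, and the same local computation as in the proof of Theorem \ref{general result on general type} (where $d=N\geq N$, the "$d>N$" there being replaced by "$d\geq N$" since the pole order $N-1$ is strictly less than $d=N$) shows that the pullback of an $N$-ple form with $(N-1)$-ple poles along $D_{\infty,l}$ becomes an honest holomorphic $N$-ple form on $\overline{\sA}_{g,Nl}$ with no poles: locally $w_i=z_j^{N}\cdot(\text{unit})$ forces $f^{*}(w_1\cdots w_t)=(z_1\cdots z_s)^{N}\cdot(\text{unit})$, which absorbs the $(N-1)$-fold pole with one factor of $z_1\cdots z_s$ to spare. (iii) To conclude general type of $\sA_{g,Nl}$ one needs enough such forms: repeat the argument of Theorem \ref{general result on general type}, replacing the single section by the $n+1$ meromorphic differentials $\eta_0,\dots,\eta_n\in H^0(\overline{\sA}_{g,l},\sO(NK+(N-1)D_{\infty,l}))$ whose ratios form a transcendence base of $\C(\sA_{g,l})$ — these exist by Sakai's Proposition 2.2 (cf.\ \cite{Sak77}) precisely because $\overline{\kappa}(\sA_{g,l})=\dim\sA_{g,l}$, which holds by Corollary \ref{Mumford-log-general-type} since $\sA_{g,l}$ and $\sA_{g,3l}$ (or any $\sA_{g,l'}$ with $3\mid l'$) have the same logarithmic Kodaira dimension, the latter being of log general type. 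Pulling all $\eta_i$ back as in (ii), we get $\{f^{*}(\eta_i)\}\subset H^0(\overline{\sA}_{g,Nl},\sO(NK_{\overline{\sA}_{g,Nl}}))$ with $\{f^{*}(\eta_i/\eta_0)\}$ a transcendence base of $\C(\overline{\sA}_{g,Nl})$, whence $\kappa(\sA_{g,Nl})=\dim\sA_{g,Nl}$.

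The main obstacle I anticipate is a bookkeeping one rather than a conceptual one: making the local pole-vs-ramification count in step (ii) uniform over all boundary components simultaneously. The subtlety is that Lemma \ref{boundary-divisor-relation-of-morphism} was proved by restricting to a single top-dimensional cone $\sigma_{\max}$ of $\Sigma_{\frak{F}_0}$, where the map looks like adjoining $(Nl/l)$-th roots of coordinates, but one must check that across the gluing of the $\widetilde{\Delta}_{\frak{F},\sigma}$'s — and after quotienting by $\overline{\Gamma_{\frak{F}}}$ — the ramification index along \emph{every} irreducible component of $D_{\infty,Nl}$ is exactly $N$, with no stacky contributions (this is where neatness of $\Gamma_g(Nl)$, forced by $Nl\geq 3$, is used). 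Once that is in hand, the inequality "pole order $\leq N-1 < N = $ ramification index" is exactly what the proof of Theorem \ref{general result on general type} exploits, and the rest is formal. A secondary point to handle carefully is that Sakai's argument requires $\overline{X}$ to be a \emph{smooth projective} compactification with SNC boundary; one should take $\overline{X}=\overline{\sA}_{g,l}^{\mathrm{tor}}$ if $l\geq 3$, and for $l=1,2$ either pass to a resolution or invoke that log Kodaira dimension is a birational-in-$X$ invariant, so one may compute it on $\sA_{g,3}$ and transport the transcendence base down.
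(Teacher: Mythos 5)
There is a genuine gap, and it sits in your step (iii). Sakai's Proposition 2.2 produces meromorphic differentials $\eta_0,\dots,\eta_n\in H^0(\overline{X},\sO(N'K_{\overline{X}}+(N'-1)B))$ whose ratios are a transcendence base only for \emph{some} integer $N'>0$ determined (ineffectively) by the log-general-type property of $\overline{X}$; you have silently identified this $N'$ with the given covering degree $N\geq\max\{3/l,N(g,l)\}$. The pole-cancellation you invoke --- ``pole order $N'-1$ is strictly less than the ramification index $N$'' --- requires $N\geq N'$, and nothing bounds Sakai's $N'$ by $N(g,l)$. In the paper's own Theorem \ref{general result on general type} the covering degree $d$ is chosen \emph{after} and larger than Sakai's $N$, which is exactly why that argument only yields the ineffective Mumford--Tai statement (Corollary \ref{Mumford-Tai's Theorem}). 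The entire content of Theorem \ref{Theorem on general-type-1} is the effective bound on the covering degree coming from the minimal weight of a cusp form, and your argument never actually uses the cusp form to do any work: it appears in your step (i) but the engine of step (iii) is Sakai's uncontrolled $N'$. A secondary problem: Proposition \ref{Koecher Principal-A(g,n)} is stated for $n\geq3$ and cannot be ``applied with level $l$'' when $l=1,2$ ($\Gamma_g(l)$ is not neat and $\sA_{g,l}$ is singular); also the section attached to the cusp form of weight $k_0(g+1)$, $k_0=N(g,l)$, is a $k_0$-ple form with poles of order $\leq k_0-1$, not an $N$-ple form with poles of order $\leq N-1$.

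The paper's proof uses the cusp form in an essentially different way, which is worth internalizing. Take a single nonzero $f\in\mathrm{S}_{k_0(g+1)}(\Gamma_g(l))$ with $k_0=N(g,l)$, and regard $\vartheta_g=f(d\mathcal{V})^{\otimes k_0}$ as a $k_0$-ple form on a Hironaka resolution $\widetilde{\sA}_{g,n}$ of $\overline{\sA}_{g,n}$ ($n=Nl$) with at most $(k_0-1)$-ple poles along $\widetilde{D}_{\infty,n}$. Because $\overline{\lambda}_{n,l}^*D_{\infty,l}=ND_{\infty,n}$ (Lemma \ref{boundary-divisor-relation-of-morphism}), the boundary vanishing order of $f$ gets multiplied by $N$, giving $\mathrm{div}(\vartheta_g)=Nm_f\widetilde{D}_{\infty,n}+Nm_f(E-E_{\mathrm{red}})+D_g$ with $Nm_f\geq N\geq k_0$, hence
$\omega_{\widetilde{\sA}_{g,n}}^{\otimes k_0}=\sO\bigl((Nm_f-k_0+1)\widetilde{D}_{\infty,n}+Nm_f(E-E_{\mathrm{red}})+D_g\bigr)$
with $Nm_f-k_0+1\geq1$. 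This effective-divisor identity yields an inclusion $\omega_{\widetilde{\sA}_{g,n}}(\widetilde{D}_{\infty,n})^{\otimes k_0}\subset\omega_{\widetilde{\sA}_{g,n}}^{\otimes hk_0}$ for $h>Nm_f$, so bigness of the log-canonical bundle (Corollary \ref{Mumford-log-general-type}) transfers to $\omega_{\widetilde{\sA}_{g,n}}$. No transcendence base of regular pluricanonical forms is ever constructed directly, and that is precisely what makes the bound on $N$ effective. If you want to salvage your route, you would need to replace Sakai's ineffective input by this comparison of $K+D$ with a multiple of $K$, at which point you have reproduced the paper's argument.
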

\begin{myrem}
The theorem  \ref{estimate-dimension} guarantees that $N(g,l)$ is a finite integer if $g\geq2$ and $l\geq3.$ There are many examples from number theory
showing  that  $N(g,1)$ is finite for some low degree $g.$ %(cf. Example \ref{low dgree Siegel varieties}).\\
\end{myrem}

\begin{example}\label{low dgree Siegel varieties}
By the following list of examples of level one cusp forms for low degree $g,$ the Siegel varieties $\sA_{g,n}$ below are of general type :

(i) $\sA_{2,n}$ for $g=2$ and $n\geq 10,$ \,\, (ii) $\sA_{3,n}$ for $g=3$ and $n\geq 9,$ \,\,(iii) $\sA_{4,n}$ for $g=4$ and $n\geq8.$

\begin{itemize}
  \item Case $g = 2$ :  Igusa shows in \cite{Igu64} that there is a cusp form $\chi_{10,2}$ of weight $10$ with development
$$\chi_{10}\left(
             \begin{array}{cc}
               \tau_1 & z \\
               z      & \tau_2 \\
             \end{array}
           \right)=(\exp(2\pi\sqrt{-1}\tau_1)\exp(2\pi\sqrt{-1}\tau_2)+\cdots)(\pi z)^2+\cdots
$$
which vanishes  along the "diagonal" $z = 0$ with multiplicity $2.$ So the zero
divisor of $\chi_{10,2}$ in $\sA_2$ is the divisor of abelian surfaces that are products of elliptic
curves with multiplicity $2.$ Thus, there is a cusp form $\vartheta_2:= \chi_{10,2}^3\in \mathrm{S}_{10(2+1)}(\Gamma_2).$

%There is the Torelli map $\sM_2 \to \sA_2$ that associates
%to a hyperelliptic complex curve of genus $2$ given by $y_2 = f(x)$ its Jacobian.
%Then the pull back of $\chi_{10}$ to $\sM_2$ is related to the discriminant of $f$.

  \item Case $g = 3$ :  Tsuyumine shows in \cite{Tsu86} that the ring of classical modular forms $\oplus M_{k}(\Gamma_3)$ is generated by $34$ elements, and there is a cusp form $\chi_{18, 3}$ of weight $18,$ namely the product of the $36$ even theta constants $\theta[\epsilon].$ The zero divisor of $\chi_{18, 3}$ on $\sA_3$ is the closure
of the hyperelliptic locus. Thus, there is a cusp form $\vartheta_3:= \chi_{18,3}^2\in \mathrm{S}_{9(3+1)}(\Gamma_3).$

%This expresses the fact that a genus $3$ Riemann surface with a vanishing theta characteristic is hyperelliptic.

  \item Case $g = 4$ : Igusa shows in \cite{Igu81} that up to isometry there is
only one isomorphism class of even unimodular positive definite quadratic
forms in 8 variables, namely $E_8.$ In $16$ variables there are exactly two such
classes, $E_8\oplus E_8$ and $E_{16}.$ To each of these quadratic forms in $16$ variables we
can associate a Siegel modular form on $\Gamma_4$ by means of a theta series: $\theta_{E_8\oplus E_8}$
and $\theta_{E_{16}}.$ The difference $\chi_{8,4}:=\theta_{E_8\oplus E_8}-\theta_{E_{16}}$ is a cusp form of weight $8.$  The zero divisor of $\chi_{8,4}$ on $\sA_{4}$ is the closure of the locus of Jacobians of Riemann surfaces of genus $4$ in $A_4$(cf.\cite{Igu81} and \cite{Poo96}). Thus, there is a cusp form
$\vartheta_4:= \chi_{8,4}^5\in \mathrm{S}_{8(4+1)}(\Gamma_4).$
\end{itemize}
\end{example}

However these examples for low genus Siegel varieties of general type are  not optimal. Actually,
one has general type for $g=2, n\geq 4;$ $g=3, n \geq 3;$ $g=4,5,6, n\geq 2;$  $g\geq 7,$ $n\geq 1.$ The case $\sA_{6,1}$ is still open,
all other cases of low genus Siegel varieties are known to be rational or unirational.
Except for the case $\sA_{6,1},$ Hulek has completed the problem of general type for low genus Siegel varieties(cf.Theorem 1.1 \cite{Hul00}).\\

%\vspace{1cm}

In Section 3 of \cite{Y-Z}, we show that there are some restricted conditions to get a  projective smooth toroidal compactification of a Siegel variety with normal crossing boundary divisor.
To  prove the theorem \ref{Theorem on general-type-1}, we need  a projective smooth compactification of a Siegel variety with normal crossing boundary divisor.
For further studies on non locally symmetric varieties, we prefer the following  consequence of Hironaka's  Main theorem II to directly refining cone decomposition in smooth toroidal compactifications.
\begin{theorem}[Hironaka cf.\cite{Hir63}]\label{haronaka-1}
Let $C$ be reduced divisor on a nonsingular variety $W$ over a field
$k$ of characteristic zero.
There exists a sequence of
monoidal transformations $$\{\pi_j: W_j=Q_{Z_{j-1}}(W_{j-1})
\rightarrow W_{j-1}\,\,|\,\, 1\leq j\leq l\}$$
( $ Q_Z(X)\to X$ means the monoidal transform of $X$
with the center $Z$) and reduced divisor
$D_j$ on $W_j$ for $1\leq j\leq l$ such that
\begin{myenumiii}
\item $W_0=W, D_0=C,$

\item $D_j=\pi_j^{-1}(D_{j-1})$(Here $\pi_j^{-1}(D_{j-1})$ is defined to be
$\pi_j^*(D_{j-1})_{\mathrm{red}}$),

\item $Z_j$ is a nonsingular closed subvariety contained in $D_j,$

\item $W^*:=W_l$ is a nonsingular variety and $D_\infty:=D_l$ is a simple normal
crossing divisor on $W^*.$
\end{myenumiii}
Moreover, the map $\pi=\pi_l\circ\cdots\circ\pi_1$ is a proper birational
morphism from $W^*$ to $W$ such that the restriction morphism
$\pi|_{W^*\setminus D_\infty}: W^*\setminus D_\infty \>\cong >> W\setminus C$
is an isomorphism.
\end{theorem}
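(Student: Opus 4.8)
The plan is to prove this by Hironaka's method of embedded resolution in characteristic zero, organized so that at each stage the blowup center is the maximal-multiplicity locus of (the transform of) $C$ and is therefore automatically contained in $D_j$. I would work locally first: near a point $x$, choose a local equation $f$ for $C$ and set $\mathrm{ord}_x(C)$ equal to the multiplicity of $f$ at $x$; let $a$ be the maximum of this order over $W$ and let $\Sigma_a=\{x:\mathrm{ord}_x(C)\ge a\}$. Since $C$ is reduced, $a\le 1$ holds precisely when $C$ is everywhere smooth. The goal is to attach to the pair $(W,C)$ a resolution invariant $\mathrm{inv}$, taking values in a well-ordered set, whose maximal locus is a nonsingular closed subvariety contained in $C$, and to show that blowing up this locus strictly lowers $\max\,\mathrm{inv}$. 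Noetherian (descending-chain) termination then forces the process to stop after finitely many steps $l$, at which stage $\mathrm{ord}\le 1$ everywhere and the accumulated exceptional divisors together with the strict transform of $C$ meet transversally, i.e.\ $D_\infty=D_l$ is simple normal crossing.

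The heart of the argument is the construction of this invariant by induction on $\dim W$ via \emph{maximal contact}, which is where characteristic zero is indispensable. At a point of maximal order $a$, some partial derivative of $f$ of order $a-1$ has order exactly one; its vanishing locus is a smooth hypersurface $H$ through $x$ that contains $\Sigma_a$ locally, and one shows the resolution problem for $C$ in $W$ descends to a resolution problem on $H$ of one lower dimension. To make this descent canonical and stable under blowups, I would package the restricted data into a marked ideal (basic object) $(H,J,b)$ built from the \emph{coefficient ideal} of $f$, and define $\mathrm{inv}$ as the lexicographic tuple $(a,a_1,a_2,\dots)$ recording the order $a$ on $W$, then the order of the coefficient ideal on $H$, and so on down the dimension tower, interleaved with bookkeeping integers that count how many previously created exceptional components pass through the point. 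The key compatibility to verify is that passing to maximal contact commutes with monoidal transformations along the top locus, so that the inductively defined centers on $H$ correspond to legitimate centers upstairs and the tuple genuinely drops.

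Two further points complete the scheme. First, one must check \emph{canonicity}: the maximal-contact hypersurface and the coefficient ideal are not unique, so one shows that $\mathrm{inv}$ and its maximal locus are independent of all auxiliary choices (and commute with smooth morphisms and localization), whence the locally defined smooth centers glue to a single nonsingular $Z_{j-1}\subset D_{j-1}$ on $W_{j-1}$; this produces the canonical sequence $\pi_j:W_j=Q_{Z_{j-1}}(W_{j-1})\to W_{j-1}$ asserted in the statement. Second, because every center $Z_{j-1}$ lies in $\Sigma_a\subset D_{j-1}$, and over $W\setminus C$ the function $f$ is a unit so $\mathrm{ord}=0$ there, no center ever meets the preimage of $W\setminus C$; hence $\pi=\pi_l\circ\cdots\circ\pi_1$ is a proper birational morphism restricting to an isomorphism $\pi|_{W^*\setminus D_\infty}:W^*\setminus D_\infty\to W\setminus C$, which gives the final clause. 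I expect the main obstacle to be exactly the inductive descent through maximal contact: proving that the coefficient-ideal construction behaves functorially under the blowups, so that the invariant is well defined and strictly decreasing while the exceptional bookkeeping keeps the terminal divisor simple normal crossing rather than merely smooth, is the technical core, and is the content of Hironaka's original work and its later algorithmic reworkings.
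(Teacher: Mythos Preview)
The paper does not prove this theorem at all: it is stated as a consequence of Hironaka's Main Theorem~II in \cite{Hir63} and used purely as a black box to pass from the toroidal compactification $\overline{\sA}_{g,n}$ (whose boundary may have self-intersections) to a smooth compactification $\widetilde{\sA}_{g,n}$ with simple normal crossing boundary. There is therefore no ``paper's own proof'' to compare your proposal against.

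Your outline is a reasonable sketch of the modern algorithmic approach to embedded resolution (Bierstone--Milman, Villamayor, W{\l}odarczyk), using maximal contact, coefficient ideals, and a well-ordered invariant, and nothing in it is wrong as a strategy. But for the purposes of this paper it is entirely unnecessary: the authors need only the \emph{statement}, and the appropriate response here is simply to cite \cite{Hir63} (or one of the later algorithmic treatments) and move on. Writing out even a sketch of Hironaka's proof would be disproportionate to its role in the paper.
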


\begin{proof}[Proof of Theorem \ref{Theorem on general-type-1}]
We fix a $\overline{\Gamma_{\frak{F}_0}}$(or $\mathrm{GL}(g,\Z)$)-admissible polyhedral decomposition $\Sigma_{\frak{F}_0}$ of $C(\frak{F}_0)$ regular with respect to $\Sp(g,\Z)$ such
that the induced symmetric $\Sp(g,\Z)$-admissible family $\{\Sigma_\frak{F}\}_{\frak{F}}$ of polyhedral decompositions is
projective.
%The subjectivity of the morphism  $\overline{\pi}: \overline{\sA}_{g}\to \sA_{g}^*$ shows the following
%the short exact sequence  $$0\>>>\sO_{\sA_{g}^*}\>>>\overline{\pi}_*\sO_{\overline{\sA}_{g}}.$$
%The properness of $\overline{\pi}$ implies that $(\overline{\pi})_*\sO_{\overline{\sA}_{g}}$ is a finitely generated $\sO_{\sA_{g}^*}$-module.
%Since  $\sA_{g}^*$ and $\overline{\sA}_{g}$ have  same function field, then we obtain
% $$\sO_{\sA_{g}^*}=\overline{\pi}_*\sO_{\overline{\sA}_{g}}$$
%by that $\sA_{g}^*$ is a normal variety.

%since $L_{g,1}$ is very ample on $\sA^*_g.$ \\
%Therefore, $k_0$ is
%well-defined,
%and there is a nonzero cusp form $f\in \mathrm{S}_{k_0(g+1)}(\Gamma_g).$\\

Let $N\geq \max\{\frac{3}{l},N(g,l)\}$ be an integer
and define $n:=Nl.$
Since we fixed the decomposition
$\Sigma_{\frak{F}_{0}},$ we have the following commutative diagram
:
$$
\begin{CDS}
\overline{\sA}_{g,n} \> \overline{\lambda}_{n,l} >> \overline{\sA}_{g,l} \\
\V  \overline{\pi}_{g,n} VV \novarr \V V \overline{\pi}_{g,l} V \\
\sA_{g,n}^* \> \lambda_{n,l}^* >> \sA_{g}^*
\end{CDS}
$$
The lemma \ref{boundary-divisor-relation-of-morphism} shows
that there is
$$\overline{\lambda}_{n,l}^*D_{\infty,l}=ND_{\infty,n}$$
where $D_{\infty,l}:=\overline{\sA}_{g,l}\setminus\sA_{g,l}$ and $D_{\infty,n}:= \overline{\sA}_{g,n}\setminus\sA_{g,n}.$

The components of the boundary divisor $D_{\infty,n}=\overline{\sA}_{g,n}\setminus\sA_{g,n}$ may have self-intersections. However,
Hironaka's  results on resolution of singularities show that there exists a smooth compactification
$\widetilde{\sA}_{g,n}$ of $\sA_{g,n}$ and a proper birational morphism
\begin{equation}\label{Hironaka-proper-desingularty}
  \nu_n : \widetilde{\sA}_{g,n} \>>> \overline{\sA}_{g,n}
\end{equation}
such that
\begin{itemize}
  \item $\widetilde{D}_{\infty,n}=\nu^*(D_{\infty,n})_{\mathrm{red}}$ is a simple normal crossing divisor, $\widetilde{\sA}_{g,n}-\widetilde{D}_{\infty,n}=\sA_{g,n},$ and the restricted morphism $\nu_n|_{\sA_{g,n}}$ is the identity morphism;
  \item furthermore, write $D_{\infty,n}=\sum_{i=1}^l D_{i}$ and let $\widetilde{D}_i$ be the strict transform of $D_i,$ then
  $$\nu^*_n(D_{\infty,n})=\sum_{i=1}^l \widetilde{D}_i + E,\,\,\mbox{ and }\widetilde{D}_{\infty,n}=\sum_{i=1}^l \widetilde{D}_i + E_{\mathrm{red}}$$
(Here $E_{\mathrm{red}}$ is the exceptional divisor of $\nu_n$).
\end{itemize}

Then we get :
  $$\nu^*_n(D_{\infty,n})=\sum_{i=1}^l \widetilde{D}_i + E=\sum_{i=1}^l \widetilde{D}_i + E_{\mathrm{red}}+ (E-E_{\mathrm{red}})=
  \widetilde{D}_{\infty,n}+(E-E_{\mathrm{red}}).$$

Let $k_0:= N(g,l)$ and let $f\in \mathrm{S}_{k_0(g+1)}(\Gamma_g(l))$ be  a  non trivial cusp form.
Let $\theta_f:=f(d\mathcal{V})^{\otimes k}\in \Gamma(\sA_{g,n},
\omega_{\sA_{g}}^{\otimes k})$ where $d\mathcal{V}$ is the standard Euclidean form  on the Siegel space $\frak{H}_g.$
By the lemma \ref{cusp form-correspondence}, we have
$\int_{\sA_{g,n}} (\theta_f\wedge \overline{\theta_f})^{1/k} < \infty.$
Theorem 2.1 in \cite{Sak77} says that
$\theta_f$ defines a  $k$-ple
$\frac{g(g+1)}{2}$-form on $\widetilde{\sA}_{g,n}$ with at most
$(k-1)$-ple poles along $\widetilde{D}_{\infty,n},$
i.e., $$\vartheta_g\in H^0(\widetilde{\sA}_{g,n},
\omega_{\widetilde{\sA}_{g,n}}^{\otimes k_0}\otimes \sO_{\widetilde{\sA}_{g,n}}((k_0-1)\widetilde{D}_{\infty,n})).$$

Let $D_g$ be the Zariski closure in $\widetilde{\sA}_{g,n}$ of the zero divisor of $f$ on $\sA_{g,n},$ and let $m_f$ be the
vanishing order of $f$ at the cusp $\frak{F}(W_g),$
where $W_{g}$ is the one dimensional isotropic real subspace of
$V_\R$ generated by the vector $e_g.$
Since $\overline{\sA}_{g,n}$ can be regarded as the normalization of the blowing-up of $\sA_{g,n}^*$ along the ideal sheaf $\sJ$ supported on the subscheme $\sA_{g,n}^*\setminus\sA_{g,n},$, we have
$$\mathrm{div}(\vartheta_g)= Nm_f\nu^*_n(D_{\infty,n}) + D_g=Nm_f\widetilde{D}_{\infty,n}+Nm_f(E-E_{\mathrm{red}})+D_g.$$
Thus, we get
\begin{eqnarray*}
% \nonumber to remove numbering (before each equation)
   \omega_{\widetilde{\sA}_{g,n}}^{\otimes k_0}\otimes \sO_{\widetilde{\sA}_{g,n}}((k_0-1)\widetilde{D}_{\infty,n})
   &=& \sO_{\overline{\sA}_{g,n}}(\mathrm{div}(\vartheta_g))\\
   &=& \sO_{\overline{\sA}_{g,n}}(Nm_f\widetilde{D}_{\infty,n}+Nm_f(E-E_{\mathrm{red}})+D_g),
\end{eqnarray*}
and
$$\omega_{\overline{\sA}_{g,n}}^{\otimes k_0}=\sO_{\overline{\sA}_{g,n}}((Nm_f-k_0+1) \widetilde{D}_{\infty,n} + Nm_f(E-E_{\mathrm{red}})+ D_g).$$

Since $D_{\infty,n},$  $(E-E_{\mathrm{red}})$ and $D_g$ are all effective divisors on
$\overline{\sA}_{g,n},$  we  have that
$$\omega_{\overline{\sA}_{g,n}}(\widetilde{D}_{\infty,n})^{\otimes k_0} \subset
\sO_{\overline{\sA}_{g,n}}(h(\widetilde{D}_{\infty,n} +(E-E_{\mathrm{red}})+ D_g))\subset
\omega_{\overline{\sA}_{g,n}}^{\otimes hk_0}\,\,\mbox{ for
}\forall h> Nm_f.$$ Therefore $\omega_{\widetilde{\sA}_{g,n}}$
becomes a big line bundle on $\widetilde{\sA}_{g,n}$ by the corollary \ref{Mumford-log-general-type}.
\end{proof}

There is extensive work on the relationship between the existence of special modular forms and the geometry of moduli spaces of abelian varieties. The principal of using the existence of low weight cusp forms to study the Kodaira dimension of  moduli spaces of polarized abelian varieties is first used in \cite{GS96} and \cite{Grit95}, our theorem \ref{Theorem on general-type-1} provides a different version of this principal.
The principal is also efficient for studying moduli spaces of $K3$
surfaces and  moduli spaces of irreducible symplectic manifolds(cf.\cite{Kon93}\& \cite{GHS11});
 Gritsenko,Hulek and Sankaran  have proven that the moduli spaces of polarized K3 surfaces are general type(cf.\cite{GHS07}).\\

\noindent{\bf Acknowledgements.} We would like to thank Professor Ching-Li Chai and Professor Kang Zuo for useful suggestions.
The second author is grateful to Mathematics Department Harvard University for hospitality during 2009/2010.\\

% Section 3

\section{Appendix}

\vspace{1cm}

\subsection{A1. On general type varieties}
We will show  that one can obtain a variety of general type from any variety of logarithmic general type by covering  method. This subsection is parallel to part of work of Mumford in Section 4 of \cite{Mum77}, but our result is a little generalization and our technique is difficult from \cite{Mum77}.

Let $X$ be a complex manifold which is a Zariski open set of a compact complex manifold $\overline{X}$ such that  such that the boundary $D:=\overline{X}-X$ is divisor with at most simple normal crossing.Let $L$ be a holomorphic line bundle on $\overline{X}.$
For any positive integer $m,$ let $\Phi_{mL}$ be a meromorphic  map
define by a basis of $H^0(\overline{X},mL).$  The $L$-dimension of
$\overline{X}$ is defined to be
$$\kappa(L,\overline{X}):=\left\{
               \begin{array}{ll}
                 \max_{m\in N(L,X)}\{\dim_\C(\Phi_{mL}(\overline{X}))\}, & \hbox{if $N(L,\overline{X})\neq \emptyset$;} \\
                 -\infty, & \hbox{if $N(L,\overline{X})=\emptyset,$}
               \end{array}
             \right.
$$
where $N(L,\overline{X}):=\{m>0\,\, |\,\, \dim_\C
H^0(\overline{X},mL)>0\}.$ We call
$\kappa(X):=\kappa(\sO_{\overline{X}}(K_{\overline{X}}),
\overline{X})$ \textbf{Kodaira dimension} of $X,$ and
$\overline{\kappa}(X):=\kappa(\sO_{\overline{X}}(K_{\overline{X}}+D),
\overline{X})$ \textbf{logarithmic Kodaira dimension} of $X.$ $X$
is said to be of \textbf{general type}(resp. \textbf{logarithmic
general type}) if $\kappa(X)$(resp. $\overline{\kappa}(X)$) equals
to $\dim X.$ All definitions above are
independent of the choice of smooth compactification of $X$(cf.\cite{Iitaka77}).

%In this subsection, we study that how to use the method of cyclic covering to get a  variety of general-type from a variety of logarithmic
%general type.

\begin{lemma}\label{lemma on ple-poles}
Let  $(\overline{X}, D)$ be a compact complex manifold with a simple normal crossing divisor $D.$
Let $m, l$ be two arbitrary positive integers, we then have an
isomorphism
\begin{equation}
H^0(\overline{X},\sO_{\overline{X}}(mK_{\overline{X}}+lD))\cong \left\{
  \begin{array}{c}
    \mbox{$m$-ple $n$-form on $\overline{X}$ with at most} \\
    \mbox{ $l$-ple poles along $D$} \\
  \end{array}
\right\}.
\end{equation}
\end{lemma}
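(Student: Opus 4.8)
The plan is to recognise that this is essentially an unwinding of the definition of the twisted sheaf $\sO_{\overline{X}}(mK_{\overline{X}}+lD)$ against the definition of the space of multicanonical forms with controlled poles, performed in local coordinates adapted to the simple normal crossing divisor $D$ and then shown to respect the gluing data. So the proof is local-to-global and essentially tautological; I spell out the bookkeeping.

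First I would recall that a global section of $\sO_{\overline{X}}(mK_{\overline{X}}+lD)$ is, by definition, a meromorphic section $s$ of the line bundle $\omega_{\overline{X}}^{\otimes m}$ over $\overline{X}$ with $\mathrm{div}(s)+lD\ge 0$; equivalently, $s$ is holomorphic on $X=\overline{X}\setminus D$ and has poles of order at most $l$ along $D$. Next, around an arbitrary point $p$ of $\overline{X}$ I would fix a coordinate chart $(U,(z_1,\dots,z_n))$ with $n=\dim_{\C}\overline{X}$ such that $D\cap U=\{z_1\cdots z_k=0\}$. On $U$ the bundle $\omega_{\overline{X}}^{\otimes m}$ is trivialised by $\omega_U:=(dz_1\wedge\cdots\wedge dz_n)^{\otimes m}$, so any meromorphic section of $\omega_{\overline{X}}^{\otimes m}$ over $U$ is uniquely of the form $f\,\omega_U$ with $f$ meromorphic on $U$, and the condition $\mathrm{div}(s)+lD\ge 0$ reads precisely: $f$ is holomorphic on $U\setminus D$ and has a pole of order $\le l$ along each branch $\{z_i=0\}$, $i=1,\dots,k$. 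This is exactly the local description of an $m$-ple $n$-form on $\overline{X}$ with at most $l$-ple poles along $D$.

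I would then let the identification be $s\mapsto s|_X$, which chart by chart is a bijection between local sections of $\sO_{\overline{X}}(mK_{\overline{X}}+lD)$ and $m$-ple $n$-forms with at most $l$-ple poles along $D$. On overlaps $U\cap U'$ both sides transform through the same cocycle, namely the $m$-th power of the Jacobian of the coordinate change, which is the transition data defining $\omega_{\overline{X}}^{\otimes m}$; hence the bijection is compatible with restriction and glues to an isomorphism on global sections. Here one also uses that "at most $l$-ple poles along $D$" is intrinsic: at a crossing point it must be read branch by branch, a condition manifestly independent of the chosen adapted chart.

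The statement carries no genuine difficulty; the only point requiring care is the bookkeeping at the crossing locus of $D$, where "$l$-ple poles along $D$" has to be interpreted as pole order $\le l$ along each local branch, so as to match the coefficient $l$ of the Cartier divisor $lD$ rather than the order of vanishing of the product $z_1\cdots z_k$. This is standard and already implicit in, e.g., \cite{Mum77} and \cite{Iitaka77}.
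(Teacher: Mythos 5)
Your proposal is correct and follows essentially the same route as the paper: a local trivialization of $\omega_{\overline{X}}^{\otimes m}$ in charts adapted to $D$, the observation that the pole condition must be read branch by branch at crossing points, and a check that both sides glue through the same transition cocycle (the $m$-th power of the Jacobian twisted by the $l$-th power of the transition data of $\sO_{\overline{X}}(D)$). The only cosmetic difference is that you phrase the pole bound via the divisorial inequality $\mathrm{div}(s)+lD\ge 0$, whereas the paper makes it explicit by dividing the local section $\varphi_\alpha$ by $\sigma_\alpha^l$ for a defining section $\sigma$ of $D$; these are the same bookkeeping.
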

\begin{proof}
We have a system of  coordinates charts $\{(U_\alpha,  (z^\alpha_1,\cdots, z^\alpha_n))\}_\alpha$ on $\overline{X}$ satisfying $\overline{X}=\bigcup_\alpha U_\alpha.$
Let $\sigma$ be a
holomorphic section of $\sO_{\overline{X}}(D)$ defining $D.$ We can write $\sigma=\{\sigma_{\alpha}\}_\alpha$
such that $(\sigma_{\alpha})=D\cap U_\alpha$ with the rule $\sigma_\alpha=\delta_{\alpha\beta}\sigma_\beta \,\,\forall \alpha,\beta,$ where every $\delta_{\alpha\beta}$ is a transition function of the line bundle $\sO_{\overline{X}}(D).$

Let $\varphi \in
H^0(\overline{X},\sO(mK_{\overline{X}}+lD)$ be a global holomorphic section. We write $\varphi=\{\varphi_\alpha\}_\alpha$ such that
$$\varphi_\alpha=k^m_{\alpha\beta}\delta_{\alpha\beta}^{l}\varphi_\beta \,\,\mbox{ on } U_\alpha\cap U_\beta,$$ where every $k_{\alpha\beta}=\det(\frac{\partial z^\beta_i}{\partial
z^\alpha_j})$ is a transition function of the canonical line bundle $O_{\overline{X}}(K_{\overline{X}}).$
Then, we obtain the corresponding
 $m$-ple $n$-form $\omega=\{\omega_\alpha\}_\alpha$ on $\overline{X}$ as follows:
$$\omega_\alpha:=\frac{\varphi_\alpha}{\sigma_{\alpha}^{l}}(dz^\alpha_1\wedge\cdots \wedge
dz^\alpha_n)^m \,\, \mbox{ in }  U_\alpha.$$

Conversely, let $\omega$ be a $m$-ple $n$-form on $\overline{X}$ with at most $l$-ple poles along $D.$
Since $U_\alpha\cap D=\{z^\alpha_{i_1}\cdots z^\alpha_{i_t}=0\},$
we have
$$\omega_\alpha:=\omega|_{U_\alpha}=\frac{f_\alpha(dz^\alpha_1\wedge\cdots \wedge
dz^\alpha_n)^m}{(z^\alpha_{i_1})^{s_1}\cdots(z^\alpha_{i_t})^{s_t}}\,\, \mbox{ on } U_\alpha,$$
where every $s_i$ in an positive integer with $s_i\leq l.$
Then, we can write $\omega=\{\omega_\alpha\}_\alpha$ where
$$\omega_\alpha:=\omega|_{U_\alpha}=\frac{\varphi_\alpha(dz^\alpha_1\wedge\cdots \wedge
dz^\alpha_n)^m}{\sigma_\alpha^l} \,\,\mbox{ on  } U_\alpha.$$
Since $\omega_\alpha=\omega_\beta \mbox{ on } U_\alpha\cap U_\beta,$ then
$\varphi=\{ \varphi_\alpha\}_\alpha$ defines a global section in $H^0(\overline{X},\sO(mK_{\overline{X}}+lD))$ with
$\mathrm{div}(\varphi)\sim mK_{\overline{X}}+lD. $
\end{proof}

\begin{lemma}[Kawamata cf.\cite{EV}\&\cite{Kawa81}]\label{kawamata-covering-trick}
 Let $X$ be a $n$-dimensional quasi-projective nonsingular variety
and let $D=\sum_{i=1}^r D_i$ be a simple normal crossing divisor on
$X$. Let $d_1,\cdots,d_r$ be positive integers.
There exists
a quasi-projective nonsingular variety $Z$ and  a finite surjective
morphism $\gamma: Z\to X$ such that
\begin{myenumiii}
\item $\gamma^*D_i=N_j(\gamma^*D_i)_{red}$ for $i=1,\cdots,r;$
\item $\gamma^*D$ is a simple normal crossing divisor.
\end{myenumiii}
\end{lemma}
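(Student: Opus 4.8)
The plan is to construct $\gamma$ not by a single cyclic cover, but as the pullback of a product of power maps on projective spaces — the Bloch--Gieseker--Kawamata construction — with every auxiliary choice made generic so that the total space is smooth; a naive "one $D_i$ at a time" cyclic cover fails precisely because each such cover acquires an $A_{d_i-1}$ singularity along the intersection of $D_i$ with the auxiliary branch divisor, so the construction must power many directions at once. First I would fix a sufficiently positive very ample $\mathcal{O}_X(1)$ and, for each $i$, use it to manufacture a finite morphism $\pi_i\colon X\to\mathbb{P}^n$ ($n=\dim X$) under which $D_i$ occurs with multiplicity one in the preimage of a coordinate hyperplane: choose one defining section of $\mathcal{O}_X(1)$ of the shape $s_{D_i}\cdot t_i'$ with $t_i'$ general in $|\mathcal{O}_X(1)\otimes\mathcal{O}_X(-D_i)|$, together with $n$ further general sections of $\mathcal{O}_X(1)$; for generic choices this subsystem is base-point-free, defines a finite $\pi_i$ with reduced ramification divisor $R_i$, and $\pi_i$ is unramified over the generic point of $D_i$ (so $D_i$ really does appear with multiplicity one, and — by genericity — is a component of no $\pi_j^{*}(\text{coordinate hyperplane})$ for $j\ne i$). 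Applying Bertini to these finitely many linear systems simultaneously, I would arrange that the union of all $D_j$, all $R_i$, and all $\pi_i^{*}(\text{coordinate hyperplanes})$ is a simple normal crossing divisor on $X$, and that the combined morphism $\Pi=(\pi_1,\dots,\pi_r)\colon X\to(\mathbb{P}^n)^{r}$ is a closed immersion transverse to every coordinate stratum of $(\mathbb{P}^n)^{r}$.

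Next let $\delta\colon(\mathbb{P}^n)^{r}\to(\mathbb{P}^n)^{r}$ act on the $i$-th factor by the $d_i$-th power map $[x_0:\cdots:x_n]\mapsto[x_0^{d_i}:\cdots:x_n^{d_i}]$; it is finite, flat, surjective, and \'etale off the union of the coordinate hyperplanes of the factors. I would set $Z:=X\times_{(\mathbb{P}^n)^{r},\,\Pi,\,\delta}(\mathbb{P}^n)^{r}$, with projections $\gamma\colon Z\to X$ and $q\colon Z\to(\mathbb{P}^n)^{r}$; as a base change of $\delta$, the map $\gamma$ is finite, flat and surjective, and $Z$ is quasi-projective. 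Over the generic point of $D_i$ only the $i$-th coordinate direction of $\delta$ ramifies, with index exactly $d_i$, while $\gamma$ is \'etale there in the remaining directions; hence $\gamma^{*}D_i=d_i\,(\gamma^{*}D_i)_{\mathrm{red}}$ for every $i$, which is (i). Both the simple-normal-crossing property of $\gamma^{*}D=\sum_i\gamma^{*}D_i$ and the nonsingularity of $Z$ are then to be read off from an \'etale-local computation: in coordinates on $X$ adapted to the simple normal crossing configuration of the first step, $\gamma$ is \'etale-locally a product of standard models $\{w^{d}=z\}\times\mathbb{A}^{\bullet}$ — together with \'etale factors, and, near the ramification of $\Pi$, a transverse fold composed with such a power map — each of which is smooth with pulled-back boundary $\{w=0\}$ reduced and normal crossing. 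This gives (ii).

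The serious content is the smoothness assertion in the last step; I expect this to be the main obstacle. The point is that after the generic choices above, $\delta$ powers \emph{every} coordinate direction of $(\mathbb{P}^n)^{r}$ at once, $\Pi(X)$ is transverse to every coordinate stratum, and $\delta$ is \'etale over the (generic) ramification locus of $\Pi$; consequently $Z$ is \'etale-locally either an open subset of the smooth variety $(\mathbb{P}^n)^{r}$ (where $\Pi$ is an immersion away from the branch strata of $\delta$) or a transverse slice of a fold composed with a power map (along $R_i$ or where coordinate strata are hit). Verifying these local normal forms — i.e.\ checking that the Jacobian criterion is met along each coordinate stratum, and that the several genericity requirements are simultaneously attainable by Bertini, which is where the characteristic-zero hypothesis enters through generic smoothness — is the technical heart of the lemma, and proceeds exactly as in the treatments of Kawamata \cite{Kawa81} and of Esnault--Viehweg \cite{EV} cited in the statement.
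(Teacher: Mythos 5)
The paper offers no proof of this lemma at all --- it is quoted from Kawamata and Esnault--Viehweg --- so your proposal has to be judged against the standard arguments. Judged that way it has a genuine gap, and it is exactly the one you flag in your opening sentence and then fail to escape. In your construction the distinguished coordinate hyperplane $H_0$ of the $i$-th factor pulls back under $\pi_i$ to the \emph{reducible} divisor $D_i+T_i$, where $T_i=\mathrm{div}(t_i')$ is the residual general member of $|\mathcal{O}_X(1)\otimes\mathcal{O}_X(-D_i)|$; since that line bundle is still positive, $T_i$ necessarily meets $D_i$ once $n\ge 2$. The power map $\delta$ ramifies along $H_0$ in exactly one coordinate direction of $(\mathbb{P}^n)^r$, and by genericity no other coordinate hyperplane of any factor passes through a general point of $D_i\cap T_i$; hence near such a point the fibre product $Z$ is cut out by the single equation $w^{d_i}=s_{D_i}\cdot t_i'=y_1y_2$ in suitable local coordinates. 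That is precisely the $A_{d_i-1}$-type singularity of the naive cyclic cover, so $Z$ is singular (it is even normal, so normalizing would not help), and your claimed \'etale-local normal form ``product of $\{w^{d}=z\}\times\mathbb{A}^{\bullet}$'' is false along $D_i\cap T_i$. Powering many directions at once only helps where the several branch divisors are distinct smooth divisors in normal crossing; it does nothing when a single branch divisor $\pi_i^*H_0$ is itself reducible, and with only one auxiliary divisor $T_i$ per component there is no second root available to separate $y_1$ from $y_2$.

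The device that actually repairs this in \cite{Kawa81} and \cite{EV} is to use \emph{several} auxiliary divisors per component: choose $A$ very ample with $A^{\otimes d_i}\otimes\mathcal{O}_X(-D_i)$ globally generated, take sufficiently many (e.g.\ $n=\dim X$) general members $T_{i,1},\dots,T_{i,n}$ of $|A^{\otimes d_i}\otimes\mathcal{O}_X(-D_i)|$ whose common intersection with $D_i$ is empty and which keep everything simple normal crossing, form the fibre product over $X$ of the cyclic covers $w_j^{d_i}=s_{D_i}t_{i,j}$, and then \emph{normalize}. At any point of $D_i$ some $t_{i,j}$ is a unit, so the corresponding $w_j$ extracts a clean $d_i$-th root of $s_{D_i}$; in the normalization the ratios $w_{j'}/w_j$ then satisfy $(w_{j'}/w_j)^{d_i}=t_{i,j'}t_{i,j}^{-1}$, i.e.\ clean roots of an SNC configuration of coordinates, and smoothness follows from the Jacobian criterion exactly as you intend. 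Your single $T_i$ and the absence of a normalization step cannot reproduce this computation, so the heart of the lemma is missing. (Two smaller points: the finiteness of $\pi_i$ and your Bertini arguments require $X$ projective, so one should first pass to a smooth compactification with SNC boundary and restrict at the end; and $\Pi$ is certainly not a closed immersion when $r=1$, though nothing essential depends on that claim.)
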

%\begin{proof}
%It is a special case of Kawamata's covering trick(cf.\cite{Kawa81} ).
%\end{proof}

%We have a general result on the finite coverings of logarithmic general type varieties.

\begin{theorem}\label{general result on general type}
Let $X$ be a complex non-singular quasi-projective
variety of logarithmic general type.
There is a
nonsingular quasi-projective variety  $Y$ of general type with a finite surjective morphism $f:Y\to X.$
\end{theorem}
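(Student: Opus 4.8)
The plan is to realise $Y$ as the preimage of $X$ inside a suitable Kawamata cover of a smooth compactification of $X$, chosen so that the ramification of the cover absorbs the boundary divisor into the canonical class, thereby converting ``big log-canonical'' into ``big canonical''. First I would fix, by resolution of singularities, a smooth projective compactification $\overline{X}$ of $X$ for which $D:=\overline{X}\setminus X=\sum_{i=1}^{r}D_i$ is a simple normal crossing divisor; the hypothesis $\overline{\kappa}(X)=\dim X$ says precisely that $K_{\overline{X}}+D$ is big on $\overline{X}$.

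Next I would pick a large integer $N$ and apply Lemma~\ref{kawamata-covering-trick} with $d_1=\cdots=d_r=N$, obtaining a nonsingular quasi-projective $Z$ and a finite surjective $\gamma\colon Z\to\overline{X}$ with $\gamma^*D_i=N(\gamma^*D_i)_{\mathrm{red}}$ for all $i$ and $\gamma^*D$ simple normal crossing; since $\overline{X}$ is projective the construction may be taken so that $Z$ is projective as well. Because $\gamma$ is finite between smooth varieties, the ramification formula gives $K_Z=\gamma^*K_{\overline{X}}+R_\gamma$ with $R_\gamma$ the effective ramification divisor. As $\gamma^*D_i$ is divisible by $N$, the ramification index of $\gamma$ along every prime divisor of $Z$ dominating some $D_i$ is a multiple of $N$, hence $\ge N$, so $R_\gamma\ge\frac{N-1}{N}\gamma^*D$; consequently $K_Z-\gamma^*\!\left(K_{\overline{X}}+\frac{N-1}{N}D\right)=R_\gamma-\frac{N-1}{N}\gamma^*D$ is effective.

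The key positivity input is then that $K_{\overline{X}}+\frac{N-1}{N}D=(K_{\overline{X}}+D)-\frac1N D$ is big for $N\gg0$: this follows from openness of bigness, and it can also be checked by Kodaira's lemma, writing $m(K_{\overline{X}}+D)=A+E$ with $A$ ample and $E$ effective and noting that $mN\!\left(K_{\overline{X}}+\frac{N-1}{N}D\right)=\bigl((N-1)A-mD\bigr)+(A+NE)$ is ample-plus-effective once $(N-1)A-mD$ is ample, which holds for $N$ large. Fixing such an $N$, the pullback $\gamma^*\!\left(K_{\overline{X}}+\frac{N-1}{N}D\right)$ is big on $Z$ (the pullback of a big $\mathbb{Q}$-divisor class under a generically finite surjective morphism is big), and adding the effective divisor above shows $K_Z$ is big, i.e. $Z$ is of general type. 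Finally I would set $Y:=\gamma^{-1}(X)=Z\setminus\gamma^{-1}(D)$ and $f:=\gamma|_Y\colon Y\to X$; then $Y$ is nonsingular quasi-projective, $f$ is finite and surjective (being the restriction of the finite surjective $\gamma$ over the open subset $X\subset\overline{X}$), and $Z$ is a smooth projective compactification of $Y$ with $Z\setminus Y=\gamma^{-1}(D)_{\mathrm{red}}$ simple normal crossing, so $\kappa(Y)=\kappa(\sO_Z(K_Z),Z)=\dim Z=\dim Y$, i.e. $Y$ is of general type.

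The main obstacle will be the bookkeeping linking the cover's ramification to the boundary: one must verify that the Kawamata cover ramifies to order a multiple of $N$ along each boundary component (so that $R_\gamma\ge\frac{N-1}{N}\gamma^*D$), and --- the crux --- that discarding the term $\frac1N D$ is harmless, i.e. that $(K_{\overline{X}}+D)-\frac1N D$ remains big. This last point is exactly where the hypothesis of logarithmic general type enters, and it is the only step that needs more than formal manipulation; everything else is the standard ramification calculus for finite covers.
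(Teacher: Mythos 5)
Your proposal is correct, and it follows the same overall skeleton as the paper's proof --- compactify $X$ to a projective $\overline{X}$ with SNC boundary $D$, take a Kawamata cover $\gamma$ branched to a fixed high order along every component of $D$, show the cover is of general type, and restrict to the preimage of $X$ --- but the way you establish general type of the cover is genuinely different. The paper invokes Sakai's Proposition~2.2 to produce sections $\eta_0,\dots,\eta_n$ of $\sO_{\overline{X}}(NK_{\overline{X}}+(N-1)D)$ whose ratios form a transcendence basis of $\C(X)$, and then carries out an explicit local computation in coordinates (via Lemma~\ref{lemma on ple-poles}) showing that the pullback of an $N$-ple form with at most $(N-1)$-ple poles along $D$ becomes regular once the cover ramifies to order $d>N$ along the boundary; general type then follows from having $n$ algebraically independent ratios of honest pluricanonical sections. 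You instead run the ramification formula $K_Z=\gamma^*K_{\overline{X}}+R_\gamma$ with $R_\gamma\ge\frac{N-1}{N}\gamma^*D$, reduce to the bigness of the $\Q$-divisor $(K_{\overline{X}}+D)-\frac{1}{N}D$ (which you verify correctly via Kodaira's lemma; note that since $\gamma^*D_i=N(\gamma^*D_i)_{\mathrm{red}}$ the ramification index along each boundary prime is exactly $N$, which is all you need), and use that pullbacks of big classes under finite surjections stay big and that big plus effective is big. Your route avoids Sakai's proposition and all local coordinate computations at the cost of invoking the standard $\Q$-divisor bigness calculus; the paper's route is more explicit about which forms extend and stays closer to the classical language of pluri-differentials with poles, which it reuses in the proof of Theorem~\ref{Theorem on general-type-1}. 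Both arguments are complete; your final step identifying $\kappa(Y)$ with $\kappa(K_Z,Z)$ is justified because $\gamma^*D$ is SNC by Kawamata's lemma, so $Z$ is an admissible compactification of $Y$ in the sense of the paper's definition of Kodaira dimension.
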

\begin{proof}Let  $n=\dim_\C X.$
Let $\overline{X}$ be a projective smooth compactification of $X$
with a simple normal crossing  boundary divisor $B.$
Since $\overline{\kappa}(Y):=\kappa(K_{\overline{X}}+B, \overline{X})=n,$ it is shown by Sakai in Proposition 2.2 of \cite{Sak77} that for some integer $N>0$ there are
meromorphic differentials
$$\eta_0,\cdots,\eta_n\in
H^0(\overline{X},\sO_{\overline{X}}(NK_{\overline{X}}+(N-1)B))$$ such that
$\{\eta_i/\eta_0,\cdots,\eta_n/\eta_0\}$ is a transcendence  base of
the function field $\C(X).$

Let $d$ be an integer more than $N.$ We use Kawamata's covering trick by setting $d=N_1=N_2=\cdots$ in the lemma \ref{kawamata-covering-trick} to
get a projective manifold $\overline{Y}_d$
and a finite surjective morphism $f: \overline{Y}_d \to \overline{X}$  such
that $f^*(B)=dD_d,$ where  $D_d$ is a simple normal crossing divisor on  $ \overline{Y}_d.$\\

We begin to show that  $\overline{Y}_d$ is of
general type.

 For any $p\in \overline{Y}_d,$ we choose a local system of regular
coordinates $(z_1,\cdots,z_n)$ in the polycylindrical neighborhood
$U_p:=\{|z_1|\leq r_p,\cdots,|z_n|\leq r_p\}$ of $p$ such that if
$p\in D_d$ then the equation $z_1\cdots z_s=0$ defines $D_d$ around $p.$ For
$q=\pi_d(p),$ we choose a local system of regular
coordinates $(w_1,\cdots,w_n)$ in the polycylindrical neighborhood
$W_q:=\{|w_1|\leq r_q,\cdots,|w_n|\leq r_q\}$ of $q$ such that if
$q\in B$ then the equation $w_1\cdots w_t=0$ defines $B$ around $q.$

By definition, we have $(\overline{f})^{-1}(B)\subset D_d$ and
$f^*w_i=\prod_{j}z^{n_{ij}}_j\epsilon_i$ with  $n_{ij}\geq 0,$
where $\epsilon_i$ is an unit around $p.$
Thus we have
$f^*\frac{d w_i}{w_i}=\sum_j n_{ij}\frac{d z_j}{z_j}+\frac{d\epsilon_i}{\epsilon_i}\in \Omega^1(\log D_d)$
around the point $p.$

Let $\omega\in \{\eta_0, \cdots, \eta_n\}$ be an element. Since  $$H^0(\overline{X},\sO_{\overline{X}}(NK_{\overline{X}}+(N-1)B))\cong \left\{
  \begin{array}{c}
    \mbox{$N$-ple $n$-form on $\overline{X}$ with   } \\
    \mbox{ at most $(N-1)$-ple poles along $B$} \\
  \end{array}
\right\},
$$
we can write
$$\omega=g(w)\frac{(dw_1\wedge \cdots d w_n)^N}{w_1^{s_1}\cdots w_t^{s_t}}\,\mbox{ on } \,W_q$$
where $g(w)$ is a holomorphic function on $W_q$ and $s_1,\cdots,s_t$ are integers in $[0,N-1].$
Around the point $q,$ we then have
$$\omega=h(w)(\prod_{i=1}^t w_i)(\frac{dw_1\wedge \cdots d w_n}{{w_1}\cdots w_t})^N\,\mbox{ on } \,W_q$$
where $h(w)$ is a holomorphic function on $W_q.$
Since $f^*(B)=dD_d,$ we get that
$$f^*(\prod_{i=1}^t w_i)= (\prod_{j=1}^s z_i)^d\cdot\varepsilon   \,\,\,\mbox{ around } p$$
where $\varepsilon$ is a unit around $p,$
and we get that
$$f^*(\omega)=k(z)(\prod_{j=1}^s z_i)^d(\frac{dz_1\wedge \cdots d z_n}{{z_1}\cdots z_s})^N \,\,\,\mbox{ around } p$$
where $k(z)$ is a holomorphic function around $p.$
Thus each $f^*(\eta_i)$ is
regular on $\overline{Y}_d.$ The lemma \ref{lemma on ple-poles} says that all $f^*(\eta_1),\cdots,f^*(\eta_n)$ are in $H^0(\overline{X},\sO_{\overline{X}}(NK_{\overline{X}})).$  Therefore, $\{f^*(\frac{\eta_1}{\eta_0}), \cdots, f^*(\frac{\eta_n}{\eta_0})\}$
is a transcendence base of the function field $\C(\overline{Y}_d)$ and $\overline{Y}_d$ is of general type.\\
\end{proof}

\subsection{A2. On Siegel modular forms}
 Some materials related to the Satake-Baily-Borel compactification of a Siegel variety are taken from \cite{BB66}.

\begin{itemize}
\item Denote congruent groups by
\begin{equation}\label{lattice-group}
\Gamma_g(1):=\Sp(g,\Z),\,\,\Gamma_g(n):=\{\gamma\in
\Sp(g,\Z)\,\,|\,\, \gamma\equiv I_{2g} \mod n\}\,\, \forall n\geq
2.
\end{equation}
Obviously, each $\Gamma_g(n)$ is a  normal subgroup of
$\Sp(g,\Z)$ with finite index. For convenience, we write
$\Gamma_g$ for $\Gamma_g(1).$

     \item A subgroup $\Gamma\subset \Sp(g,\Q)$ is said to be \textbf{arithmetic} if
$\rho(\Gamma)$ is commensurable with $\rho(\Sp(g,\Q))\cap \GL(n,\Z)$ for some embedding
$\rho: \Sp(g,\Q)\> \hookrightarrow>> \GL(n,\Q).$ By a result of Borel, a subgroup $\Gamma\subset \Sp(g,\Q)$ arithmetic if and only if
$\rho^{'}(\Gamma)$ is commensurable with $\rho^{'}(\Sp(g,\Q))\cap \GL(n^{'},\Z)$ for every embedding
$\rho^{'}: \Sp(g,\Q)\> \hookrightarrow>> \GL(n^{'}).$ %(cf.Chap. VI. \cite{Mil}).
We note that a subgroup $\Gamma\subset \Sp(g,\Z)$ is arithmetic if and only if
$[\Sp(g,\Z):\Gamma]<\infty.$
     \item Let $k^{'}$ be a subfield of $\C.$ An automorphism $\alpha$ of a $k^{'}$-vector space is defined to be \textbf{neat}
(or \textbf{torsion free}) if its eigenvalues in $\C$ generate a torsion free subgroup of $\C.$
An element $h\in \Sp(g,\Q)$  is said to be \textbf{neat}(or \textbf{torsion free}) if $\rho(h)$ is neat for one faithful representation $\rho$ of $\Sp(g,\Q).$  A subgroup $\Gamma\subset\Sp(g,\R)$ is \textbf{neat} if all elements of $\Gamma$ are torsion free.
 It is known that if $n\geq 3$ then $\Gamma_g(n)$  is a neat arithmetic subgroup of $\Sp(g,\Q).$
\end{itemize}

The Siegel space $\frak H_g$ of degree $g$ is a complex manifold defined to be
the set of all symmetric matrices over $\C$ of degree $g$ whose imaginary parts are positive defined.
%Thus, we have $$\frak{H}_g=\frak{Y}_g+\frak{Y}_g^*,$$ where $\frak{Y}_g$ is the vector space of $(g\times g)$-symmetric matrices and $\frak{Y}_g^*$ is the cone of positive-definite matrices in $\frak{Y}_g,$
%and so $\frak{H}_g$ is a tube domain.
The action of $\Sp(g,\R)$ on $\frak{H}_g$ is defined as
$$\left(
         \begin{array}{cc}
           A & B \\
           C & D \\
         \end{array}
       \right)\bullet\tau:=\frac{A\tau+B}{C\tau+D}.$$
It is known that the simple real Lie group $\Sp(g,\R)$ acts on
$\frak{H}_g $ transitively.
   A \textbf{Siegel variety} is defined
to be
$\sA_{g,\Gamma}:=\Gamma\backslash\frak{H}_g,$
where $\Gamma$ is an arithmetic subgroup of $\Sp(g,\Q).$
\begin{itemize}
  \item A Siegel variety $\sA_{g,\Gamma}$ is a normal quasi-project variety.
  \item Any  neat arithmetic subgroup $\Gamma$ of $\Sp(g,\Q)$ acts freely on the Siegel Space $\frak{H}_g,$ then the induced
$\sA_{g,\Gamma}$ is a regular quasi-projective complex variety of
dimension $g(g+1)/2.$

  \item A \textbf{Siegel variety of degree $g$ with level $n$} is defined to be
$$\sA_{g,n}:=\Gamma_g(n)\backslash\frak{H}_g. $$
Thus, the Siegel varieties $\sA_{g,n}\,\,$($n\geq 3$)  are
quasi-projective complex manifolds.\\
\end{itemize}

The Siegel space $\frak H_g$ can be realized as a bounded domain parameterizing weight one polarized Hodge structures :
\begin{myprop1}[Borel's embedding  cf.\cite{Sat}\&\cite{Del73}]\label{Borel-embedding}
Define
$$
\frak{S}_g=\frak{S}(V_\R,\psi):=\{F^1\in \mathrm{Grass}(g,
V_\C)\,\,)\,\, | \,\, \psi(F^1,F^1)=0, \sqrt{-1}\psi(F^1,
\overline{F^1})>0\}.
$$
The map $\iota: \frak{H}_g\>\cong>> \frak{S}_g\,\,\tau \mapsto F^1_\tau$
identifies the Siegel space $\frak{H}_g$ with the period domain
$\frak{S}_g,$ where
$F_{\tau}^1:=\mbox{the subspace of $V_\C$ spanned by the column vectors of  $\left(
                                                                           \begin{array}{c}
                                                                             \tau \\
                                                                             I_g
                                                                           \end{array}
                                                                         \right).$ }$
Moreover, the map $h$ is biholomorphic.
\end{myprop1}

We set
\begin{eqnarray*}
% \nonumber to remove numbering(before each equation)
\overline{\frak{S}_g}   &:=& \{F^1\in \mathrm{Grass}(g, V_\C)\,\,|\,\, \psi(F^1,F^1)=0, \sqrt{-1}\psi(F^1, \overline{F^1})\geq 0\}, \\
 \partial\frak{S}_g  &:=& \{F^1\in \check{\frak{S}}_g \,\,|\sqrt{-1}\psi(F^1, \overline{F^1})\geq 0, \psi(\cdot, \overline{\cdot}) \mbox{ is degenerate on } F^1\}.\\
                     &=&\{F^1\in \overline{\frak{S}_g} \,\,|F^1\cap \overline{F^1} \mbox{ is a non trivial isotropic space }\}
\end{eqnarray*}
A \textbf{boundary component} of the Siegel space $\frak{S}_g=\frak{S}(V,\psi)$ is  a subset in $\partial\frak{S}_g$ given by
$$\frak{F}(W_\R):=\{ F^1\in \overline{\frak{S}_g} \,\,|\,\,  F^1\cap \overline{F^1}=W_\R\otimes\C \mbox{ where $W_\R$ is an isotopic real subspace of $V_\R$ } \}.$$
A boundary component $\frak{F}(W)$ of the Siegel
space $\frak{S}_g$ is rational(i.e., $\frak{F}(W)$ is a cusp) if and only if $W=W_\Q\otimes\R,$
where $W_\Q$ is an isotropic  subspace  of $(V_\Q,\psi).$

Define
$\frak{H}_g^*:=\bigcup\limits_{\mbox{ cusp }\frak{F } }
\frak{F}.$
The set $\frak{H}_g^*$ is stable under the action of
$\Sp(g,\Q).$ Actually, the set $\frak{H}_g^*$ is a disjoint union of locally closed $\Sp(g,\Q)$-orbits
$\frak{H}_g^*=\sO_0\bigcup\limits^{\circ}\sO_1\bigcup\limits^{\circ}\cdots\bigcup\limits^{\circ}\sO_{g},$
and each orbit $\sO_r$ is a set of disjoint union rational boundary components with same rank, i.e.,
$$\sO_{g-r}:=\bigcup^{\circ}_{\mbox{$\frak{F}(W)$ with } \dim_\R W=r} \frak{F}(W).$$
In particular, $\sO_g=\frak{F}(\{0\})=\frak{H}_g.$
Let $\Gamma$ be an arbitrary arithmetic
subgroup of $\Sp(g,\Q).$  Let $\sA_{g,\Gamma}^*:=\Gamma\backslash\frak{H}_g^*$ be  the \textbf{Satake-Baily-Borel compactification} of $\sA_{g,\Gamma}.$ It is known that the analytic variety $\sA_{g,\Gamma}^*$  has an algebraic
structure as a normal projective complex variety.

Let $l$ be an arbitrary positive integer.
It is known that there is
$\frak{F}(W)\cong \frak{H}_{g-r}$
for any  cusp with $\dim_\R W=r.$
Therefore, the quotient
$\Gamma_g(l)\backslash\sO_{g-r}$ is a disjoint union of
$[\Sp(g,\Z):\Gamma_g(l)]$ locally closed subsets, and each
disjoint component of $\Gamma_g(l)\backslash\sO_r$ is canonically
isomorphic to the Siegel variety $\sA_{g-r, l}.$
%If $r>0,$ all these locally closed subschemes are called \textbf{cusp subschemes}.
The Satake-Baily-Borel compactification $\sA_{g,l}^*$ of
the Siegel variety $\sA_{g,l}$ is
$$\sA_{g,l}^*:=\Gamma_g(l)\backslash \frak{H}_g^*= (\Gamma_g(l)\backslash\sO_0)\bigcup^{\circ}(\Gamma_g(l)\backslash\sO_1)
\bigcup^{\circ}\cdots\bigcup^{\circ}(\Gamma_g(l)\backslash\sO_{g}).$$
In particular, we have
$\sA_{g}^*=\sA_{g}\bigcup\limits^{\circ}\sA_{g-1}\bigcup\cdots\bigcup\limits^{\circ}\sA_{0}.$
The construction of Satake-Baily-Borel compactifications shows
there is a  natural morphism
\begin{equation}\label{morphism-BBS}
    \lambda_{n,m}^*: \sA_{g,n}^* \>>> \sA_{g,m}^*
\end{equation}
for any two positive integers $m,n$ with $m|n.$

\begin{definition}[Cf.\cite{Fre}]
Let $k\geq 1,n\geq 1,g\geq2$ be integers. A complex-valued
function on $\frak{H}_g$ is called a \textbf{Siegel modular form
of weight $k,$ degree $g$ and level $n$} if the following
conditions are satisfied:
\begin{itemize}
    \item $f:\frak{H}_g\to \C$ is a holomorphic function;

    \item $ f(\tau)= (f|\gamma)(\tau):=\det (C\tau+D)^{k}f(\gamma(\tau))
\,\,\,\,\forall \gamma=\left(
                                                           \begin{array}{cc}
                                                             A & B \\
                                                             C & D \\
                                                           \end{array}
                                                         \right)
\in \Gamma_g(n).$
\end{itemize}
Denote by
$$\mathrm{M}_k(\Gamma_g(n)):=\{ \mbox{Siegel modular forms of weight $k,$ degree $g$ and level $n$}  \}.$$
\end{definition}

Recall some important properties of Siegel modular forms (Cf.\cite{Fre}):
 Let $f\in \mathrm{M}_{k}(\Gamma_g(n))$($g\geq 2$). The Siegel modular form $f$ has an
expansion of the form
\begin{equation}\label{fourier expansion}
    f(\tau)=\sum_{2A\in \mathrm{Sym}_g(\Z),A\geq 0}c(A)\exp(\frac{\sqrt{-1}\pi}{n}\mathrm{Tr}(A\tau))
\end{equation}
where $c(A)$ are constant coefficients. The series \ref{fourier
expansion} converges absolutely on $\frak{H}_g$ and uniformly on
each subset of $\frak{H}_g$ of the form $W^g_\epsilon=\{
X+\sqrt{-1}Y\in\frak{H}_g\,\,|\,\, Y\geq \epsilon I_{2g} \} \mbox{
with $\epsilon>0.$ }$ In particular, $f$ is bounded on each
subsets. All coefficients $c(A)$ for $2A\in \mathrm{Sym}_g(\Z)$ and $A\geq 0$ satisfy
\begin{equation}\label{fourier expansion coefficients}
    c(^tVAV)=(\det(V))^k\exp(-\frac{\sqrt{-1}\pi}{n}\mathrm{Tr}(AVU))c(A)
\end{equation}
for any $M\in \Gamma_g(n)$ of the form
$M=M(V,U)=\left(%
\begin{array}{cc}
  V^{-1} & U \\
  0      & ^tV \\
\end{array}%
\right).$ The series \ref{fourier expansion} is called the
Fourier expansion of $f,$ and any $c(A)$ with $2A\in
\mathrm{Sym}_g(\Z)$ and $ A\geq 0$ is a Fourier coefficient
of $f.$

Let $n$ be an arbitrary positive integer.
Define $\tau_{t}= \left(
\begin{array}{cc}
  \tau^{'} & 0 \\
  0 & \sqrt{-1}t
\end{array}%
\right) $ in $\frak{H}_{g}$ with $\tau^{'}\in\frak{H}_{g-1}, t>0.$ By Proposition 1.3 in Section 1 \cite{Y-Z}, $\lim\limits_{t\to
\infty}\tau_{t}$ corresponds to the following  element in
$\partial\overline{\frak{S}_g}$
$$F_{\tau^{'},\infty}:=\mbox{ the subspace of $V_\C$ spanned by the column vectors
of $\left(
\begin{array}{cc}
   \tau^{'} & 0\\
   0        & 1\\
   I_{g-1}  & 0\\
   0        & 0
\end{array}
\right).$ }$$ Let $W_{g}$ be the one dimension isotropic real subspace of
$V_\R$ generated by $e_g.$ $F_{\tau^{'},\infty}$ is actually in
the cusp $\frak{F}(W_{g}).$ The properties \ref{fourier expansion} and \ref{fourier expansion coefficients} guarantee
that  any modular $f\in \mathrm{M}_{k}(\Gamma_g(n))$ can extend to
be a holomorphic function on
$$\frak{H}_g\bigcup \bigcup_{\mbox{Cusp $\frak{F}$ with }
\frak{F}(W_{g})\preceq \frak{F} }\frak{F},$$ we then have
\begin{eqnarray*}
  f(F_{\tau^{'},\infty}) &=&  \lim_{t\to \infty}f(\left(
\begin{array}{cc}
  \tau^{'} & 0 \\
  0 & \sqrt{-1}t
\end{array}%
\right) ):=\Phi_n(f)(\tau^{'}) \\
   &=&\sum_{2A^{'}\in \mathrm{Sym}_{g-1}(\Z),A^{'}\geq 0}c(
\left(%
\begin{array}{cc}
  A^{'} & 0 \\
  0     & 0 \\
\end{array}%
\right))\exp(\sqrt{-1}\pi\mathrm{Tr}(A^{'}\tau^{'})).
\end{eqnarray*}
Therefore, for $g\geq 2$ we can define the \textbf{Siegel
operators}
$\Phi_n: \mathrm{M}_{k}(\Gamma_g(n))\to \mathrm{M}_{k}(\Gamma_{g-1}(n))$ by sending $f$ to $\Phi_n(f)$ for all positive integer $n.$
For any integers $n\geq 1,k\geq1,g\geq 2,$
$$\mathrm{S}_{k}(\Gamma_g(n)):=\{ f\in M_{k}(\Gamma_g(n))\,\,\,|\,\, \Phi_n(f|\gamma)=0\mbox{ for }\forall \gamma\in \Gamma_g\}.$$
is a set of all \textbf{Siegel cusp forms} of weight $k,$ degree $g$ and level $n.$\\

%\begin{proof}
%It can be obtained from Lemma \ref{tangent bundle-Siegel varietie} and Proposition \ref{Deligne-extension}.
%\end{proof}

%\vspace{1cm}

%\vspace{1cm}
%\newpage
%%%%%%%%%%%%%%%%%%%%%%% References %%%%%%%%%%%%%%%%%

%%%%%%%%%%%%%%%%%%%%%%%% End References %%%%%%%%%%%%%%%%%

\end{document}